\documentclass[11pt]{amsart}

\pdfoutput=1

\usepackage[text={420pt,660pt},centering]{geometry}

\usepackage{cancel}
\usepackage{esint,amssymb}
\usepackage{graphicx}
\usepackage{mathtools} 
\usepackage[colorlinks=true, pdfstartview=FitV, linkcolor=blue, citecolor=blue, urlcolor=blue,pagebackref=false]{hyperref}
\usepackage{microtype}

\usepackage{bm}
\usepackage{dsfont}
\usepackage{mathrsfs}
\usepackage{xcolor}
\usepackage{accents} \usepackage{enumitem} 

\parskip= 2pt
\usepackage[normalem]{ulem}

\setcounter{tocdepth}{2}

\newtheorem{proposition}{Proposition}
\newtheorem{theorem}[proposition]{Theorem}
\newtheorem{lemma}[proposition]{Lemma}
\newtheorem{corollary}[proposition]{Corollary}

\theoremstyle{remark}
\newtheorem{remark}[proposition]{Remark}

\theoremstyle{definition}
\newtheorem{definition}[proposition]{Definition}

\numberwithin{equation}{section}
\numberwithin{proposition}{section}
\numberwithin{figure}{section}
\numberwithin{table}{section}

\newcommand{\N}{\mathbb{N}}

\newcommand{\R}{\mathbb{R}}

\newcommand{\E}{\mathbb{E}}
\renewcommand{\P}{\mathbb{P}}

\renewcommand{\S}{\mathbf{S}}

\newcommand{\basis}{\Sigma}

\newcommand{\std}{{\mathrm{std}}}

\newcommand{\identity}{{\mathbf{Id}}}
\newcommand{\bi}{{\mathbf{i}}}
\newcommand{\bal}{{\boldsymbol{\alpha}}}
\newcommand{\brho}{{\boldsymbol{h}}}
\newcommand{\one}{{\boldsymbol{1}}}
\newcommand{\bh}{{\boldsymbol{h}}}
\newcommand{\bg}{{\boldsymbol{g}}}
\renewcommand{\zeta}{{\alpha^{-1}}}

\newcommand{\eps}{\varepsilon}

\renewcommand{\leq}{\leqslant}
\renewcommand{\geq}{\geqslant}

\renewcommand{\subset}{\subseteq}
\renewcommand{\bar}{\overline}
\renewcommand{\tilde}{\widetilde}

\renewcommand{\hat}{\widehat}

\newcommand{\Ll}{\left}
\newcommand{\Rr}{\right}
\renewcommand{\d}{\mathrm{d}}

\newcommand{\D}{D}
\newcommand{\sP}{\mathscr{P}}
\newcommand{\m}{m}

\DeclareMathOperator{\tr}{tr}
\DeclareMathOperator{\supp}{supp}

\newcommand{\la}{\left\langle}
\newcommand{\ra}{\right\rangle}

\newcommand{\sF}{{\mathscr{F}}}
\newcommand{\cM}{\mathcal{M}}
\newcommand{\rhs}{\mathrm{RHS}}
\newcommand{\lhs}{\mathrm{LHS}}
\newcommand{\cleq}{\preccurlyeq}
\newcommand{\cgeq}{\succcurlyeq}
\renewcommand*{\dot}[1]{\accentset{\mbox{\large\bfseries .}}{#1}}
\renewcommand{\gamma}{\dot\mu}
\newcommand{\Sym}{{\mathrm{Sym}(\D)}}
\newcommand{\ks}{{\mathbf{s}}}

\begin{document}

\author{Hong-Bin Chen}
\address{Institut des Hautes \'Etudes Scientifiques, France}
\email{hbchen@ihes.fr}

\keywords{Spin glass, vector spin, Parisi PDE, convexity}
\subjclass[2010]{82B44, 82D30}

\title[Parisi PDE and convexity]{Parisi PDE and convexity for vector spins}

\begin{abstract}
We consider mean-field vector spin glasses with self-overlap correction. The limit of free energy is known to be the Parisi formula, which is an infimum over matrix-valued paths. We decompose such a path into a Lipschitz matrix-valued path and the quantile function of a one-dimensional probability measure. For such a pair, we associate a Parisi PDE generalized for vector spins. Under mild conditions, we rewrite the Parisi formula in terms of solutions of the PDE. Moreover, for each fixed Lipschitz path, the Parisi functional is strictly convex over probability measures.
\end{abstract}

\maketitle

\section{Introduction}

\subsection{Setting}
Fix an integer $\D$ and let $P_1$ be a finite measure on $\R^\D$. For each $N\in\N$, we sample $\sigma = (\sigma_{ki})_{1\leq k\leq \D,\,1\leq i\leq N}\in\R^{\D\times\N}$ by independently drawing the column vectors $\sigma_{\bullet i} = (\sigma_{ki})_{1\leq k\leq \D}$ from $P_1$ for every $i\in\{1,\dots,N\}$. The distribution $P_N$ of $\sigma$ is thus given by $\d P_N(\sigma) = \otimes_{i=1}^N\d P_1(\sigma_{\bullet i})$.
We interpret $P_1$ as the distribution for a single $\R^\D$-valued spin and $\sigma$ as the spin configuration consisting of $N$ independent spins.
We assume that
\begin{gather}
    \text{$P_1$ is supported on the unit ball of $\R^\D$;}\label{e.supp_P_1}
    \\
    \text{$P_1$ is not a Dirac measure.}\label{e.not_Dirac}
\end{gather}

Let $\S^D\subset \R^{\D\times \D}$ be the set of $D\times D$ symmetric matrices. Let $\S^\D_+\subset \S^\D$ (resp.\ $\S^\D_{++}$) be the subset consisting of positive semi-definite (resp.\ definite) matrices.

For two vectors or matrices, $a$ and $b$, of the same dimension, we write $a\cdot b = \sum_{i,j}a_{ij}b_{ij}$ as the Frobenius (entry-wise) inner product and $|a|=\sqrt{a\cdot a}$. Sometimes, for $x,y\in \R^\D$ and $a,b\in \S^\D$, we write $\la x,y\ra_{\R^\D} = x\cdot y$ and  $\la a,b\ra_{\S^\D}= a\cdot b$ for clarity of the ambient space.
For $a, b\in \R^{\D\times \D}$, we write $a\cleq b$ and $b\cgeq a$ if $a\cdot c\leq b\cdot c$ for all $c\in\S^\D_+$. 
For $a\in\S^\D_+$, we denote its matrix square root by $\sqrt{a}$.

For each $N$, we are given a centered Gaussian process $(H_N(\sigma))_{\sigma\in\R^{\D\times N}}$ with covariance
\begin{align}\label{e.xi}
    \E H_N(\sigma)H_N(\sigma') = N\xi\Ll(\frac{\sigma\sigma'^\intercal}{N}\Rr), \quad\forall \sigma,\sigma'\in\R^{\D\times N}
\end{align}
for $\xi:\R^{\D\times\D}\to \R$ satisfying conditions \ref{i.xi_loc_lip}--\ref{i.xi_incre} stated in Section~\ref{s.prelim}.

We are interested in describing the limit of the (standard) free energy:
\begin{align*}
    F^\std_N = \frac{1}{N}\E\log \int \exp\Ll(H_N(\sigma)\Rr)\d P_N(\sigma)
\end{align*}
as $N\to\infty$.
Compared with the Sherrington--Kirkpatrick (SK) model ($\D=1$, $P_1$ uniform on $\{-1,+1\}$, and $\xi(r)=\beta r^2$), one difficulty of the general setting is that the $\S^\D_+$-valued \textit{self-overlap} $\frac{\sigma\sigma^\intercal}{N}$ is not constant.

To bypass this difficulty, we focus on a modification of $F^\std_N$:
\begin{align*}
    F_N = \frac{1}{N}\E \log \int \exp\Ll(H_N(\sigma)-\frac{N}{2}\xi\Ll(\frac{\sigma\sigma^\intercal}{N}\Rr)\Rr)\d P_N(\sigma).
\end{align*}
We call the additional term the \textit{self-overlap correction}, which is $-\frac{1}{2}$ times the variance of $H_N(\sigma)$. 
If $\xi$ is convex on $\S^\D_+$, then we have the Parisi formula
\begin{align}\label{e.F_N=inf}
    \lim_{N\to\infty}F_N = \inf_{\pi\in\Pi} \sP(\pi)
\end{align}
with infimum over
\begin{align}\label{e.Pi}
    \Pi = \Ll\{\pi:[0,1]\to\S^\D_+\ \big|\  \text{$\pi$ is left-continuous and increasing}\Rr\}
\end{align}
where $\pi$ is said to be increasing if $\pi(s)\cgeq \pi(s')$ for $s\geq s'$. The functional $\sP(\pi)$ is defined in terms of Ruelle probability cascades. We postpone the definition to Section~\ref{s.pf_main}. The Parisi formula~\eqref{e.F_N=inf} was proved in \cite[Corollary~8.3]{HJ_critical_pts} (also see \cite[Remark~8.5]{HJ_critical_pts}), which uses the upper bound via the Hamilton--Jacobi equation approach \cite{mourrat2020nonconvex,mourrat2020free,chen2022hamilton,chen2022hamilton2}. Under a stronger assumption that $\xi$ is convex on $\R^{\D\times\D}$, \eqref{e.F_N=inf} was proved in \cite[Theorem~1.1]{chen2023self} via more classical spin glass techniques.

In fact, the infimum in \eqref{e.F_N=inf} can be taken over paths with a fixed endpoint. By \cite[Theorem~1.1]{chen2023on} (with $x=0$ therein), if $\xi$ is convex on $\S^\D_+$, then there is $z\in\S^\D_+$ such that
\begin{gather}
    \lim_{N\to\infty} \E\la \Ll|\frac{\sigma\sigma^\intercal}{N}-z\Rr|\ra =0, \label{e.z}
    \\
    \label{e.parisi}
    \lim_{N\to\infty}F_N = \inf_{\pi \in \Pi(z)} \sP(\pi),
\end{gather}
where $\la\cdot\ra$ is the Gibbs measure associated with $F_N$ and
\begin{align}\label{e.Pi(z)}
    \Pi(z) = \Ll\{\pi\in\Pi\ \big| \  \pi(1) = z\Rr\}.
\end{align}

\subsection{Motivation and main results}
A natural question is whether there is a unique minimizer.
One approach is to prove the strict convexity of the functional.
In this work, we offer a partial answer.
Our approach starts by rewriting the Parisi formula in terms of Parisi PDE solutions.

In the initial prediction by Parisi \cite{parisi80}, the limit of free energy $F_N^\mathrm{SK}$ in the SK model is expressed in terms of a parabolic PDE with one spatial dimension, which is later referred to as the Parisi PDE. In a slight more general model, the mixed $p$-spin model with Ising spins ($D=1$, $P_1$ uniform on $\{-1,+1\}$, and $\xi(r) = \sum_{p=2}^\infty \beta^2_p r^p$), the Parisi PDE takes the form
\begin{align*}
    \partial_s u_\alpha(s,x)+ \frac{1}{2}\xi''(s)\Ll(\partial_x^2 u_\alpha(s,x) + \alpha(s)\Ll(\partial_x^2 u_\alpha(s,x)\Rr)^2\Rr) = 0,\quad (s,x) \in [0,1]\times \R
\end{align*}
with terminal condition $u_\alpha(1,x) = \log\cosh(x)$,
where $\alpha$ is the cumulative distribution function of a probability measure on $[0,1]$. Hence, $\alpha$ belongs to
\begin{align}\label{e.cM=}
    \cM =\Ll\{\alpha:[0,1]\to[0,1]\ \big|\ \text{$\alpha$ is right-continuous and increasing; $\alpha(0)=0$; $\alpha(1)=1$}\Rr\}.
\end{align}
It is well-known that
\begin{align*}
    \lim_{N\to\infty} F_N^\text{mixed-$p$} = \inf_{\alpha\in\cM} \Ll(u_\alpha(0,0)-\frac{1}{2}\int_0^1r\xi''(r)\alpha(r)\d r\Rr).
\end{align*}
In our setting of vector spins, there is an more infimum to take.
Let us explain this below.

Heuristically, if $\pi$ is a minimizer of \eqref{e.parisi}, then the limit $R_\infty$ of the \textit{overlap} $\frac{\sigma\sigma'^\intercal}{N}$ has the law of $\pi(U)$ where $U$ is the uniform random variable on $[0,1]$.
Panchenko's synchronization states that $R_\infty = \Psi\Ll(\tr(R_\infty)\Rr)$ for some $\Psi$ in
where
\begin{align}\label{e.Pi_Lip(z)}
    \Pi_\mathrm{Lip}(z)=\Ll\{\Psi\in\Pi(z)\ \big|\ \text{$\Psi$ is Lipschitz}\Rr\}.
\end{align}
Let $\alpha$ be the cumulative distribution function of $\tr(R_\infty)$ and $\alpha^{-1}$ be its left-continuous inverse (known as the \textit{quantile function}) defined by
\begin{align*}\alpha^{-1}(s) = \inf\{t\in[0,1]:s\leq \alpha(t)\},\quad\forall s\in [0,1].
\end{align*}
Since $\tr(R_\infty)$ has the law of $\alpha^{-1}(U)$, we get $\pi = \Psi\circ \alpha^{-1}$.
Hence, $\pi$ is encoded in two pieces of information: a Lipschitz path $\Psi$ and a distribution $\alpha$ along it.
In view of \eqref{e.parisi}, we expect an infimum over $\alpha$ and another over $\Psi$.

We describe the Parisi PDE in our setting.
For $\Psi\in\Pi_\mathrm{Lip}(z)$, we set $\mu:[0,1]\to \S^\D$ by
\begin{align}\label{e.mu}
    \mu(s) = \nabla\xi\circ\Psi(s),\quad\forall s\in[0,1]
\end{align}
and write $\gamma(s) = \frac{\d}{\d s}\mu(s)$.
(By the local Lipschitzness of $\nabla\xi$ assumed in \ref{i.xi_loc_lip} and Rademacher's theorem, $\gamma$ exists a.e.)
The Parisi PDE generalized for vector spins is the following:
\begin{gather}\label{e.Parisi_PDE}
    \partial_s \Phi(s,x) + \frac{1}{2}\la \gamma(s)\,,\,\nabla^2\Phi(s,x)+\alpha(s)(\nabla \Phi(s,x))(\nabla \Phi(s,x))^\intercal\ra_{\S^D}  =0, 
\end{gather}
for $(s,x)\in [0,1]\times \R^\D$, 
together with the terminal condition
\begin{align}\label{e.Phi(1,x)=_intro}
    \Phi(1,x) = \log \int \exp\Ll( x\cdot \sigma\Rr)\d \tilde P_1^z(\sigma), \quad x\in \R^\D,
\end{align}
where
$\tilde P_1^z$ is a tilting of $P_1$ given by
\begin{align}\label{e.tildeP_1}
    \d \tilde P_1^z(\sigma) = \exp\Ll(-\frac{1}{2}\nabla\xi(z)\cdot\sigma\sigma^\intercal \Rr) \d P_1(\sigma) = \exp\Ll(-\frac{1}{2}\mu(1)\cdot\sigma\sigma^\intercal \Rr) \d P_1(\sigma).
\end{align}
We denote the solution by $\Phi_{\tilde P_1^z,\Psi,\alpha}$, which is roughly defined as follows (see Definitions~\ref{d.sol_pde_discrete} and~\ref{d.sol_pde}). If $\alpha$ is a step function, we solve the equation by the Cole--Hopf transform on each interval where $\alpha$ is constant. For general $\alpha$, we define $\Phi_{\tilde P_1^z,\Psi,\alpha}$ to be the limit of $\Phi_{\tilde P_1^z,\Psi,\alpha_n}$ for step functions $(\alpha_n)_{n\in\N}$ approximating $\alpha$.
We will give sufficient conditions for the existence of the solution in Corollary~\ref{c.well-defined}.

Our first result is a rewriting of the Parisi formula~\eqref{e.parisi}.
Define $\theta:\R^{\D\times\D}\to\R$ by
\begin{align}\label{e.theta}
    \theta(a) = a\cdot\nabla\xi(a)-\xi(a),\quad\forall a \in \R^{\D\times \D}.
\end{align}
For $\Psi\in \Pi_\mathrm{Lip}(z)$ and $\alpha\in\cM$, provided that $\Phi_{\tilde P_1^z,\Psi,\alpha}$ exists, we define
\begin{align}\label{e.sF}
    \sF(\Psi,\alpha) = \E\Ll[\Phi_{\tilde P_1^z,\Psi,\alpha}\Ll(0,\sqrt{\mu(0)}\eta\Rr)\Rr]+ \frac{1}{2}\theta(z) - \frac{1}{2}\int_0^1\alpha(s)\Psi(s)\cdot
    \gamma(s) \d s
\end{align}
where $\eta$ is the standard $\R^\D$-valued Gaussian vector and $\E$ integrates $\eta$.
We need to consider $\Psi$ with additional regularity and set 
\begin{align}\label{e.Pi_reg}
    \Pi_\mathrm{reg}(z) =\Ll\{\Psi\in \Pi_\mathrm{Lip}(z):\: \text{$\Psi$ is continuously differentiable};\; \frac{\d}{\d s}\Psi(s)\in\S^\D_{++},\,\forall s\in[0,1]\Rr\}.
\end{align}

\begin{theorem}
\label{t.lim}
Suppose $z\in \S^\D_{++}$ and that $\xi$ is twice continuously differentiable satisfying
\begin{align}\label{e.cond_xi}
    c\cdot\nabla(b\cdot\nabla\xi)(a)>0,\qquad\forall a \in\S^\D_+,\  b\in\S^\D_+\setminus\{0\},\ c\in\S^\D_{++}.
\end{align}
If $\xi$ is convex on $\S^\D_+$, then
\begin{align*}
    \lim_{N\to\infty} F_N = \inf_{\Psi\in\Pi_\mathrm{reg}(z)}\inf_{\alpha\in\cM} \sF(\Psi,\alpha).
\end{align*}
\end{theorem}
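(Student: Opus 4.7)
The plan is to upgrade the matrix Parisi formula \eqref{e.parisi} in three steps: (i) decompose every $\pi\in\Pi(z)$ as $\pi=\Psi\circ\alpha^{-1}$ with $(\Psi,\alpha)\in\Pi_{\mathrm{Lip}}(z)\times\cM$; (ii) identify $\sP(\Psi\circ\alpha^{-1})=\sF(\Psi,\alpha)$ through the Parisi PDE; and (iii) restrict the outer infimum to $\Pi_{\mathrm{reg}}(z)$ by density. For step (i), given $\pi\in\Pi(z)$, let $\alpha$ be the right-continuous modification of $t\mapsto(\tr z)^{-1}\tr\pi(t)$ and define $\Psi$ on the range of $\alpha$ by $\Psi(\alpha(t))=\pi(t)$, extending across flats by linear interpolation; Panchenko's matrix-synchronization argument shows the resulting $\Psi$ is $1$-Lipschitz, increasing, and satisfies $\Psi(1)=z$. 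The map $(\Psi,\alpha)\mapsto\Psi\circ\alpha^{-1}$ is surjective onto $\Pi(z)$, so \eqref{e.parisi} becomes $\lim_{N\to\infty}F_N=\inf_{\Pi_{\mathrm{Lip}}(z)\times\cM}\sP(\Psi\circ\alpha^{-1})$.

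The core identity $\sP(\Psi\circ\alpha^{-1})=\sF(\Psi,\alpha)$ I plan to establish first for $\alpha$ a step function with jumps at $0=s_0<\cdots<s_n=1$, corresponding to the finite-RSB ansatz. On each constancy interval $(s_k,s_{k+1})$ the PDE \eqref{e.Parisi_PDE} linearizes via the Cole--Hopf transform $W=\exp(\alpha(s_k)\Phi)$ into a backward heat equation with coefficient matrix $\gamma(s)$, whose solution reproduces the iterated log-moment-generating functional computed level by level down the Ruelle cascade. The terminal condition \eqref{e.Phi(1,x)=_intro} together with the tilt \eqref{e.tildeP_1} supplies the Gibbs weight at the finest level, while the residual $\frac{1}{2}\theta(z)-\frac{1}{2}\int_0^1\alpha\Psi\cdot\gamma\,\d s$ in \eqref{e.sF} arises from discrete integration by parts against $\xi\circ\Psi$ combined with the self-overlap correction in $F_N$. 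For general $\alpha\in\cM$ one approximates by step functions $\alpha_n$: Corollary~\ref{c.well-defined} together with the definition of $\Phi_{\tilde P_1^z,\Psi,\alpha}$ as a limit gives $\sF(\Psi,\alpha_n)\to\sF(\Psi,\alpha)$, while standard continuity of RPC functionals gives $\sP(\Psi\circ\alpha_n^{-1})\to\sP(\Psi\circ\alpha^{-1})$.

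For step (iii), given $\Psi\in\Pi_{\mathrm{Lip}}(z)$, I set $\Psi_\epsilon(s)=(1-\epsilon)\Psi(s)+\epsilon s\, z$ and then mollify in $s$; using $z\in\S^\D_{++}$, the derivative is bounded below by $\epsilon z\in\S^\D_{++}$, so $\Psi_\epsilon\in\Pi_{\mathrm{reg}}(z)$, uniform convergence $\Psi_\epsilon\to\Psi$ holds, and the associated $\gamma$ converges in $L^1$, whence $\sF(\Psi_\epsilon,\alpha)\to\sF(\Psi,\alpha)$. The main obstacle I anticipate is the $\alpha_n\to\alpha$ passage inside (ii): to transfer the PDE-RPC identity across the limit, I need uniform Lipschitz estimates on $\Phi_{\tilde P_1^z,\Psi,\alpha_n}$ in $x$, which in turn rely on the parabolicity of \eqref{e.Parisi_PDE}. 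This is precisely where the hypotheses of the theorem enter: condition \eqref{e.cond_xi} combined with $\Psi'\in\S^\D_{++}$ along $\Pi_{\mathrm{reg}}(z)$ renders $\gamma(s)$ nonzero and positive semidefinite, while $z\in\S^\D_{++}$ makes the Gaussian initialization $\sqrt{\mu(0)}\eta$ non-degenerate, together supplying the smoothing needed both for Corollary~\ref{c.well-defined} to apply and for the approximation argument to close.
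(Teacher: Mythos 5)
Your overall plan is sound and in its first two stages runs parallel to the paper: the Parisi formula \eqref{e.parisi} is the input, and the core identity $\sF(\Psi,\alpha)=\sP(\Psi\circ\alpha^{-1})$ for step functions $\alpha$ is exactly Lemma~\ref{l.sF=sP}, established there by the cascade computation (Lemmas~\ref{l.X_0=}, \ref{l.RPC_equiv}, Corollary~\ref{c.EPhi=}) plus the discrete integration-by-parts for the $\theta(z)-\int\alpha\Psi\cdot\gamma$ residual. You attribute that residual to ``the self-overlap correction in $F_N$''; in fact the self-overlap correction is already absorbed into $\sP$ through the tilt in $\tilde P_1^z$, and the residual comes purely from summation by parts against $\xi\circ\Psi$ — a small misattribution. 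The use of the Lipschitz estimate of Corollary~\ref{c.well-defined} and of $\pi\mapsto\sP(\pi)$ (Lemma~\ref{l.Lip_P(pi,x)}) to pass to general $\alpha$ is correct.

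Where you diverge from the paper, and where the argument currently has real gaps, is in the direction $\inf_{\Pi(z)}\sP\geq\inf_{\Pi_\mathrm{reg}(z)\times\cM}\sF$. Your step (i) aims to decompose \emph{every} $\pi\in\Pi(z)$ as $\Psi\circ\alpha^{-1}$ with $\Psi\in\Pi_\mathrm{Lip}(z)$, $\alpha\in\cM$, but: (a) you declare $\alpha$, not $\alpha^{-1}$, to be (the right-continuous modification of) $(\tr z)^{-1}\tr\pi$; with that convention $\Psi\circ\alpha^{-1}$ is the quantile composed twice and does not reproduce $\pi$ — you want $\alpha^{-1}\propto\tr\pi$, with $\alpha$ its generalized inverse; (b) if $\tr\pi$ vanishes on an initial interval, the resulting $\alpha$ has $\alpha(0)>0$ and hence is not in $\cM$, so the construction as written is not surjective; (c) the Lipschitz constant of $\Psi$ is $\tr z$ (coming from $|a|\leq\tr a$ for $a\in\S^\D_+$), not $1$, and it is this elementary inequality, not ``Panchenko's matrix-synchronization argument,'' that delivers Lipschitz continuity — synchronization concerns the optimizer, not an arbitrary $\pi$; (d) $z\in\S^\D_{++}$ does not make $\sqrt{\mu(0)}\eta$ non-degenerate, since $\mu(0)=\nabla\xi(\Psi(0))$ can be singular regardless of $z$. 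The paper sidesteps all of this: in Lemma~\ref{l.limF=limP} no decomposition is performed. One first uses Lemma~\ref{l.infP>infP} to replace $\inf_{\Pi(z)}\sP$ by $\inf_{\Pi(z-\eps\identity_\D)}\sP+o_\eps(1)$, mollifies a near-optimal $\pi_\eps$ one-sidedly, and adds $\eps s\identity_\D$ to land in $\Pi_\mathrm{reg}(z)$; one then takes $\Psi=\hat\pi_\eps$ and $\alpha=\mathrm{id}$ (so $\Psi\circ\alpha^{-1}=\hat\pi_\eps$ trivially) and concludes by Lipschitz continuity of $\sP$ and $\sF$. Your convex-combination perturbation $\Psi_\eps=(1-\eps)\Psi+\eps s z$ is actually a cleaner alternative to the paper's shrink-and-expand (it keeps the endpoint at $z$ automatically), but it only helps once a Lipschitz $\Psi$ in hand; the decomposition step to produce that $\Psi$ is both subtler than you indicate and, more importantly, unnecessary.
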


We view \eqref{e.cond_xi} as a monotonicity condition on $\nabla \xi$. 
which is needed to ensure the existence of $\Phi_{\tilde P_1^z,\Psi,\alpha}$ for every $\Psi \in \Pi_\mathrm{reg}(z)$.
We will show (in Lemma~\ref{l.sF=sP}) $\sF(\Psi,\alpha) = \sP(\pi)$ with $\Psi\circ\alpha^{-1} = \pi$.

In the spirit of Auffinger and Chen's work \cite{aufche}, we have a result on the convexity of $\sF$.

\begin{theorem}\label{t.convex}
If $\Psi\in \Pi_\mathrm{Lip}(z)$ satisfies that
\begin{align}\label{e.gamma>0}
    \text{$\gamma$ exists everywhere on $[0,1]$ and $\gamma(s)\in\S^\D_{++}$ for almost every $s\in[0,1]$}
\end{align}
with respect to the Lebesgue measure, then the following holds:
\begin{enumerate}
    \item\label{i.Phi_convex} the functional $\cM\ni \alpha\mapsto \sF(\Psi,\alpha)$ is strictly convex: if $\alpha_0,\,\alpha_1\in\cM$ are distinct, then
    \begin{align*}
        \sF\Ll(\Psi,(1-\lambda)\alpha_0+\lambda\alpha_1\Rr) < (1-\lambda)\sF(\Psi,\alpha_0)+\lambda \sF(\Psi,\alpha_1),\quad\forall \lambda\in(0,1);
    \end{align*}
    \item\label{i.uniq_minimizer} there is a unique minimizer $\alpha^\star$ of $\inf_{\alpha\in\cM}\sF(\Psi,\alpha)$.
\end{enumerate}
\end{theorem}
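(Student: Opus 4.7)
The plan is to follow the approach of Auffinger and Chen \cite{aufche}: derive a stochastic/variational representation of $\Phi_{\tilde P_1^z,\Psi,\alpha}$ in which $\alpha$-convexity is essentially manifest, and then upgrade to strict convexity using the nondegeneracy hypothesis \eqref{e.gamma>0}. Part \ref{i.uniq_minimizer} will then follow from \ref{i.Phi_convex} by compactness of $\cM$. As a first reduction, note that in \eqref{e.sF} the summand $\tfrac12\theta(z)$ is constant in $\alpha$ and $\alpha\mapsto\tfrac12\int_0^1\alpha(s)\Psi(s)\cdot\gamma(s)\,ds$ is linear, so strict convexity of $\sF(\Psi,\cdot)$ is equivalent to strict convexity of
\begin{align*}
U(\alpha)\;:=\;\E\bigl[\Phi_{\tilde P_1^z,\Psi,\alpha}\bigl(0,\sqrt{\mu(0)}\,\eta\bigr)\bigr],
\end{align*}
and I work with $U$ from now on.

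\textbf{Convexity via variational formula.} Because the nonlinearity $\tfrac12 \alpha(s)(\nabla\Phi)^\intercal \gamma(s)(\nabla\Phi)$ in \eqref{e.Parisi_PDE} is the Legendre dual of a quadratic control cost, I would prove a Bou\'e--Dupuis-type representation
\begin{align*}
\Phi_{\tilde P_1^z,\Psi,\alpha}(0,x)\;=\;\sup_{v}\,\E\!\left[\Phi_{\tilde P_1^z,\Psi,\alpha}\!\left(1,\,x+M_1+\int_0^1\alpha(s)\gamma(s)v_s\,ds\right)-\tfrac12\int_0^1\alpha(s)v_s^\intercal\gamma(s)v_s\,ds\right],
\end{align*}
where $(B_s)$ is standard $\R^\D$-valued Brownian motion, $M_s:=\int_0^s\sqrt{\gamma(r)}\,dB_r$, and the supremum ranges over adapted controls $v$. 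I would establish this first for step $\alpha$ by an iterated Cole--Hopf/convex-duality computation and then transport it to general $\alpha\in\cM$ via Definitions~\ref{d.sol_pde_discrete}--\ref{d.sol_pde}. Convexity of $U$ is then immediate: for each fixed $v$, the bracketed integrand is convex in $\alpha$, because $\Phi(1,\cdot)=\log\int\exp(x\cdot\sigma)\,d\tilde P_1^z(\sigma)$ is a convex log-moment generating function (see \eqref{e.Phi(1,x)=_intro}) composed with an affine function of $\alpha$, while the running cost is linear in $\alpha$; and a supremum of convex functions is convex.

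\textbf{Strict convexity and uniqueness.} Fix $\alpha_0\neq\alpha_1\in\cM$, $\lambda\in(0,1)$, and $\alpha_\lambda=(1-\lambda)\alpha_0+\lambda\alpha_1$; let $v^\star$ be the (unique) optimizer for $\alpha_\lambda$, whose uniqueness follows from strict convexity of $v\mapsto\int\alpha_\lambda v^\intercal\gamma v\,ds$ granted by \eqref{e.gamma>0}. Write $F(\alpha,v)$ for the expression under the sup. Then
\begin{align*}
U(\alpha_\lambda)=\E F(\alpha_\lambda,v^\star)\;\le\;(1-\lambda)\,\E F(\alpha_0,v^\star)+\lambda\,\E F(\alpha_1,v^\star)\;\le\;(1-\lambda)U(\alpha_0)+\lambda U(\alpha_1).
\end{align*}
The first inequality is strict pathwise unless either the two terminal arguments $y_i:=x+M_1+\int_0^1\alpha_i\gamma v^\star\,ds$ coincide for $i=0,1$, or $\Phi(1,\cdot)$ is affine on the segment joining them; nondegeneracy of $\tilde P_1^z$ inherited from \eqref{e.not_Dirac} rules out the latter, and the fact that $M_1$ has a full-support Gaussian law on $\R^\D$ (thanks to $\gamma\in\S^\D_{++}$ a.e.) makes $y_0=y_1$ a nontrivial pathwise equation on $v^\star$ forcing $\int_0^1(\alpha_0-\alpha_1)\gamma v^\star\,ds=0$ almost surely. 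Combining this with the first-order optimality condition characterizing $v^\star$ (essentially $v^\star_s=\nabla\Phi_{\tilde P_1^z,\Psi,\alpha_\lambda}(s,X_s^{v^\star})$) should yield $\alpha_0\equiv\alpha_1$, proving \ref{i.Phi_convex}. For \ref{i.uniq_minimizer}: identifying $\cM$ with probability measures on $[0,1]$, the set is weakly compact, and $\sF(\Psi,\cdot)$ is continuous under weak convergence (built into the defining approximation of $\Phi$), so a minimizer exists; strict convexity makes it unique.

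\textbf{Main obstacle.} The delicate step is the rigidity part of the preceding paragraph, i.e.\ converting the equality case in the Jensen chain into $\alpha_0=\alpha_1$, and ensuring this strictness is preserved when general $\alpha\in\cM$ are approximated by step functions. The robust workaround is to prove a quantitative strict-convexity estimate of the form $\tfrac12(U(\alpha_0)+U(\alpha_1))-U(\alpha_{1/2})\gtrsim\|\alpha_0-\alpha_1\|_*^2$ in an appropriate metric on $\cM$, so that strict convexity is stable under limits. The hypothesis \eqref{e.gamma>0} is essential here: it yields full-dimensional Gaussian spreading of $M$ at every time of positive $\gamma$-measure, preventing the optimizers for different $\alpha$'s from collapsing onto a common lower-dimensional subspace, which would be the only obstruction to strict convexity in this scheme.
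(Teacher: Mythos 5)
Your overall architecture matches the paper's: reduce to the PDE term, establish a Bou\'e--Dupuis-type variational representation (this is the paper's Proposition~\ref{p.var}), deduce convexity of $\alpha\mapsto\Phi_\alpha(0,x)$ from convexity of $\phi$ and affineness of the control-dependent argument (paper's Lemma~\ref{l.Phi_convex_alpha}), and then upgrade to strict convexity (Proposition~\ref{p.strict_convex}), finishing part~\eqref{i.uniq_minimizer} by compactness. However, there are two genuine gaps in your strict-convexity argument.

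First, your claim that the optimizer $v^\star$ for $\alpha_\lambda$ is unique ``by strict convexity of $v\mapsto\int\alpha_\lambda v^\intercal\gamma v\,ds$'' is incomplete: the full functional you maximize is $v\mapsto\E\bigl[\Phi(1,\cdot)\bigr]-\tfrac12\E\int\alpha_\lambda v^\intercal\gamma v$, and the terminal term is \emph{convex} in $v$ (since $\Phi(1,\cdot)=\phi$ is convex), so it competes with the concavity of the running cost. Strict concavity of the whole functional is not automatic. The paper's Lemma~\ref{l.unique_maximizer} handles this by working on a short sub-interval where $\int_s^t\alpha\tr(\gamma)\,dr<C^{-1}$ (with $C$ the operator-norm bound on $\nabla^2\Phi$ from Lemma~\ref{l.Phi_convex}~\eqref{i.d^2Phi<C}), and additionally requires $\alpha(s)>0$. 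This is why Proposition~\ref{p.strict_convex} chooses a small window $[\tau',\tau)$ near the top of the interval where $\alpha_0,\alpha_1$ differ, proves strictness there, and then propagates it backward to $[0,\tau')$ through the strictly convex terminal condition.

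Second, your rigidity step --- passing from ``$\int_0^1(\alpha_0-\alpha_1)\gamma v^\star\,ds=0$ a.s.'' to ``$\alpha_0\equiv\alpha_1$'' --- is asserted but not carried out, and it is genuinely the hard part. The paper does \emph{not} use the integral vanishing directly. Instead, in Step~2 of Proposition~\ref{p.strict_convex} it argues by contradiction: equality in Jensen forces $u^\star_\lambda$ to also be the (unique) maximizer for $\alpha_0$, hence $u^\star_0=u^\star_\lambda$; then Lemma~\ref{l.deriv_Phi} (It\^o calculus for $\nabla\Phi$ and $\nabla^2\Phi$ along the SDE trajectory) plus the It\^o isometry yields $\nabla^2\Phi^0=\nabla^2\Phi^\lambda$ along the trajectory; finally, the second identity in Lemma~\ref{l.deriv_Phi} produces an equation of the form $\int_a^b\alpha_0\,\E[\cdots]\,dr=\int_a^b\alpha_\lambda\,\E[\cdots]\,dr$, where $\E[\cdots]>0$ by the positive-definiteness in Lemma~\ref{l.Phi_convex}~\eqref{i.d^2Phi>0} (using~\eqref{e.not_Dirac}) and~\eqref{e.gamma>0}. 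This forces $\alpha_0=\alpha_\lambda$ on the interval, a contradiction. Your suggested ``quantitative strict-convexity estimate $\gtrsim\|\alpha_0-\alpha_1\|_*^2$'' would indeed be a clean alternative, but you do not prove it, and the paper does not take that route; the interval-localization plus It\^o-isometry argument is what closes the gap.

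Your comment about worrying whether strictness survives approximation of general $\alpha$ by step functions is also a red herring in the paper's framework: Proposition~\ref{p.var} and Lemma~\ref{l.deriv_Phi} are proved directly for general $\alpha\in\cM$ (using the regularity established in Proposition~\ref{p.reg_Phi}), so no such passage to the limit is needed in the strict-convexity proof.
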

In particular, condition~\eqref{e.gamma>0} guarantees the existence of $\Phi_{\tilde P_1^z,\Psi,\alpha}$.

\begin{remark}
The convexity of $\xi$ is needed in Theorem~\ref{t.lim} (also in Theorem~\ref{t.D=1}) to have~\eqref{e.parisi}. Hence, we can replace the convexity assumption therein by~\eqref{e.parisi}. Theorem~\ref{t.convex} does not require $\xi$ to be convex.
\end{remark}

\subsection{Discussion on applications}\label{s.discussion}

The most interesting situation is when we can find some $\Psi^\star$ such that
\begin{align}\label{e.limF_N=inf_alpha}
    \lim_{N\to \infty} F_N = \inf_{\alpha\in\cM}\sF(\Psi^\star,\alpha).
\end{align}
Then, we hope to apply Theorem~\ref{t.convex} to get the strict convexity of $\alpha\mapsto \sF(\Psi^\star,\alpha)$
and the uniqueness of the minimizer. 
It seems that the identification of such $\Psi^\star$ depends on the specific setup and does not follow easily from Theorem~\ref{t.lim}.

When $\D=1$, we will show in Theorem~\ref{t.D=1} that \eqref{e.limF_N=inf_alpha} holds for $\Psi^\star: s\mapsto zs$. 
Moreover, the strict convexity and the uniqueness follow from Theorem~\ref{t.convex}.

For $\D> 1$, it is less obvious to find such $\Psi^\star$ in general. 
Nevertheless, a particularly promising setting is the Potts spin glass model with symmetric spin interactions. To be explained in Section~\ref{s.Psi_potts}, we expect
\begin{align*}
    \Psi^\star(s) = \frac{s}{\D}\identity_\D + \frac{1-s}{\D^2}\one_\D,\quad s\in[0,1]
\end{align*}
where $\identity_\D$ is the $\D\times \D$ identity matrix and $\one_\D$ is the $\D\times\D$ matrix with all entries equal to $1$.
A rigorous verification of this for the constraint free energy (different from our modification) can be seen in \cite{bates2023parisi}.
We consider the mixed $p$-spin interaction and let $(\beta_p)_{p=2}^\infty$ be the sequence of inverse temperatures for the $p$-th order interaction ($\beta_p\geq 0$). Although we do not prove \eqref{e.limF_N=inf_alpha}, we will show in Theorem~\ref{t.convex_Potts} the strict convexity of $\alpha\mapsto \sF(\Psi^\star,\alpha)$ and the uniqueness of the minimizer of $\inf_{\alpha\in\cM}\sF(\Psi^\star,\alpha)$. More precisely, we show the following:
\begin{itemize}
    \item If $\beta_p>0$ for some $p\geq 3$, then~\eqref{e.gamma>0} holds and Theorem~\ref{t.convex} is applicable.
    \item When the interaction is purely quadratic (i.e.\ $\beta_2>0$ and $\beta_p=0$ for all $p\geq 3$), unfortunately,~\eqref{e.gamma>0} does not hold. Interestingly, we can still exploit properties of $\mu$ to prove the desired results.
\end{itemize}

\subsection{Related works}

The Parisi formula was initially proposed in \cite{parisi79,parisi80}  for the SK model, which was later mathematically established in \cite{gue03,Tpaper}.
Generalizations were made in various settings: the SK model with soft spins \cite{pan05} and spherical spins \cite{tal.sph}, the mixed $p$-spin model with Ising spins \cite{panchenko2014parisi} and spherical spins \cite{chen2013aizenman}, the multi-species model with Ising spins \cite{pan.multi} and spherical spins \cite{bates2022free},
the Potts spin glass \cite{pan.potts},
the mixed $p$-spin model with vector spins \cite{pan.vec}, and an extension along a Hamilton--Jacobi equation \cite{mourrat2020extending}.
The self-overlap correction appeared in the Hamilton--Jacobi approach by Mourrat \cite{mourrat2019parisi,mourrat2020extending,mourrat2020nonconvex,mourrat2020free} and was recently revisited in \cite{chen2023self} for vector spin glasses.
For more detail on the Hamilton--Jacobi equation approach to statistical mechanics, we refer to~\cite{HJbook}.

The Parisi PDE has been widely studied, mainly in the SK model. Quantitative studies of the solution have led to important results about the Parisi formula \cite{talagrand2006parisi,auffinger2015properties,aufche,talagrand2006parisi}. It was predicted in \cite{mezard1987spin} that the minimizer of the Parisi formula in the SK model is unique. The question about the strict convexity of the Parisi functional was raised in \cite{panchenko2005question}, which has been studied in \cite{panchenko2005question,talagrand2006parisi,Tpaper,bovklim,chen2015partial,jagtob} and was resolved in \cite{aufche} through a variational representation of the Parisi PDE solution.
In the setting of the SK model, \cite{panchenko2005question,talagrand2006parisi,Tpaper,chen2015partial} studied the Parisi PDE using the classical Cole--Hopf transformation together with probability methods and \cite{jagtob} presented a PDE approach.
The multi-dimensional Parisi PDE was considered in \cite{bovklim} to describe the Parisi functional in the vector spin glass with quadratic interaction. There, the PDE was solved in the viscosity sense and a variational representation of the solution was also obtained. 
In this work, we adopt the approach of the Cole--Hopf transformation and probabilistic methods.

Recently, \cite{bates2023parisi} has shown that the limit of the constraint free energy with symmetric interaction in the Potts spin glass is of the form in~\eqref{e.limF_N=inf_alpha} (see \cite[Theorem~1.7 and Remark~1.8]{bates2023parisi}) for $\Psi^\star$ described above. It is remarked in \cite{bates2023parisi} that similar arguments in \cite{aufche} can be extended to show the uniqueness of the minimizer. We believe that our results (Theorems~\ref{t.convex} and~\ref{t.convex_Potts}) are applicable.

\subsection{Organization}\label{s.org}

In Section~\ref{s.PDE}, we study the solutions of the Parisi PDE. We start by solving the PDE using the Cole–Hopf transformation for discrete $\alpha$. Then, we use the probabilistic representation of the solution in terms of the Ruelle probability cascade to show that the solution is Lipschitz in $\alpha$. This allows us to define the solution for general $\alpha$ through continuity.
We finish by proving regularity properties of the solution.

In Section~\ref{s.convex}, we first establish a variational formula for the Parisi PDE solution. Then, we investigate the convexity of the solution in $x\in\R^\D$. This enables us to show that the variational formula for the solution has a unique maximizer. Finally, we show that the solution is convex in $\alpha$.

In Section~\ref{s.pf_main}, we collect results in previous sections to prove the main results. We also prove the result in one dimension, which was mentioned previously in Section~\ref{s.discussion}.

In Section~\ref{s.Potts}, we introduce the setting of the Potts spin glass and prove the results mentioned in Section~\ref{s.discussion}.

\subsection{Acknowledgements}
The author is grateful to Erik Bates from whom the author learned the idea to determine the synchronization map by using symmetry in the Potts spin glass. This project has received funding from the European Research Council (ERC) under the European Union’s Horizon 2020 research and innovation programme (grant agreement No.\ 757296).

\section{Parisi PDE}\label{s.PDE}

In the section, we study the Parisi PDE generalized to the vector spin setting and prove useful properties of its solution.
We refer to Section~\ref{s.org} for the outline of this section.

\subsection{Preliminaries}\label{s.prelim}

Throughout this work, we assume that $\xi$ in \eqref{e.xi} satisfies
\begin{enumerate}[start=1,label={\rm (H\arabic*)}]
    \item \label{i.xi_loc_lip}
    $\xi$ is differentiable and $\nabla\xi$ is locally Lipschitz;
    \item $\xi\geq 0$ on $\S^\D_+$, $\xi(0)=0$, and $\xi(a)=\xi(a^\intercal)$ for all $a\in \R^{\D\times\D}$;
    \item \label{i.xi_incre} if $a,\,b\in\S^\D_+$ satisfies $a\cgeq b$, then $\xi(a)\geq \xi(b)$ and $\nabla \xi(a)\cgeq \nabla \xi(b)$.
\end{enumerate}
We recall following fact \cite[Theorem 7.5.4]{horn2012matrix}:
\begin{align}\label{e.ab>0}
    \text{for $a\in\S^\D$,\qquad $a\in\S^\D_+$ if and only if $a\cdot b\geq 0$ for every $b\in \S^\D_+$.}
\end{align}
Therefore, if $a\in\S^\D$, then $a\in\S^\D_+$ if and only if $a\cgeq 0$.

We equip $\cM$ in \eqref{e.cM=} with metric
\begin{align}\label{e.d_M=}
    d_\cM\Ll(\alpha,\alpha'\Rr) = \int\Ll|\alpha(r) - \alpha'(r)\Rr|\d r,\quad\forall \alpha,\alpha'\in \cM.
\end{align}
We describe the space of discrete cumulative distribution functions.
Let $\cM_\d\subset \cM$ consist of $\alpha:[0,1]\to[0,1]$ of the following form: for some $K\in\N$,
\begin{align}\label{e.alpha=}
    \alpha = \sum_{l=0}^{K}(\m_{l}-\m_{l-1})\mathds{1}_{[q_l,\infty)} = \sum_{l=0}^{K-1} m_l \mathds{1}_{[q_l,q_{l+1})}  + m_K \mathds{1}_{\{q_K\}}
\end{align}
where $\m_{-1},m_0,\dots,\m_{K}\in[0,1]$ and $q_0,q_1,\dots,q_K\in [0,1]$ satisfy
\begin{align}\label{e.m,q}
    0=\m_{-1}\leq\m_0< \dots<\m_{K} = 1,\qquad 
    0= q_0\leq q_1<\dots< q_K=1.
\end{align}
It is straightforward that $\cM_\d $ is a dense subset of $\cM$.

For $\alpha$ given in~\eqref{e.alpha=}, its left-continuous inverse $\alpha^{-1}:[0,1]\to[0,1]$ is given by
\begin{align}\label{e.alpha^-1=}
    \alpha^{-1} = q_0 \mathds{1}_{\{m_{-1}\}} + \sum_{l=0}^K q_l \mathds{1}_{(m_{l-1},m_l]}.
\end{align}
We work with the following condition: 
\begin{align}\label{e.weak_gamma>0}
    \text{$ \frac{\d}{\d s}\sqrt{\mu(t)-\mu(s)} $ exists for every $s,t$ satisfying $0\leq s<t\leq 1$.}
\end{align}
We will show in Lemma~\ref{l.weak->gamma>0} that~\eqref{e.weak_gamma>0} is satisfied under conditions in Theorems~\ref{t.lim} and~\ref{t.convex}. We state a simple fact.

\begin{lemma}\label{e.d+d=-gamma}
If $\Psi$ satisfies~\eqref{e.weak_gamma>0}, then, for $0\leq s <t\leq 1$, \begin{align*}
    \Ll(\frac{\d}{\d s}\sqrt{\mu(t)-\mu(s)}\Rr)\sqrt{\mu(t)-\mu(s)} + \sqrt{\mu(t)-\mu(s)}\Ll(\frac{\d}{\d s}\sqrt{\mu(t)-\mu(s)}\Rr) = -\gamma(s).
\end{align*}
\end{lemma}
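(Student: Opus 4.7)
The plan is to reduce the identity to the product rule for matrix multiplication applied to a tautology. Fix $0 \le s < t \le 1$ and set $A(s) := \sqrt{\mu(t)-\mu(s)}$. Before anything else, I would verify that $A(s)$ is genuinely well-defined. Since $\Psi\in\Pi_\mathrm{Lip}(z)$ is $\cgeq$-increasing and $\nabla\xi$ preserves this order by \ref{i.xi_incre}, we have $\mu(t)-\mu(s) = \nabla\xi(\Psi(t)) - \nabla\xi(\Psi(s)) \cgeq 0$. Symmetry is inherited from the axiom $\xi(a)=\xi(a^\intercal)$, which upon differentiation in a symmetric direction forces $\nabla\xi(a)$ to annihilate skew-symmetric perturbations, and hence to be symmetric, whenever $a$ is symmetric. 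Thus $\mu(t)-\mu(s) \in \S^\D_+$, and $A(s)$ is its unique symmetric positive semi-definite square root.

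The heart of the argument is the tautology
\begin{equation*}
    A(s)\,A(s) = \mu(t)-\mu(s)
\end{equation*}
(matrix product on the left). By the hypothesis~\eqref{e.weak_gamma>0}, the factor $A(s)$ is differentiable in $s$, so the product rule for matrix multiplication applies and gives
\begin{equation*}
    \frac{\d}{\d s}\bigl(A(s)A(s)\bigr) = A'(s)A(s) + A(s)A'(s).
\end{equation*}
The two summands cannot be merged into $2A(s)A'(s)$ because $A(s)$ and $A'(s)$ need not commute — this is exactly why the lemma is stated in this unsymmetric form.

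On the other side of the tautology, the differentiability of $A(s)^2$ transfers to $\mu$ through $\mu(s) = \mu(t) - A(s)^2$, so $\mu$ is differentiable at $s$; by the very definition of $\gamma$, we have $\frac{\d}{\d s}\mu(s) = \gamma(s)$, and therefore $\frac{\d}{\d s}(\mu(t)-\mu(s)) = -\gamma(s)$. Equating the two expressions for $\frac{\d}{\d s}A(s)^2$ yields the claim. There is no real obstacle; the entire content is the combination of the tautology $A(s)^2 = \mu(t)-\mu(s)$ with the non-commutative matrix product rule.
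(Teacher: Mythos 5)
Your proposal is correct and takes essentially the same approach as the paper: both differentiate the identity $\sqrt{\mu(t)-\mu(s)}^{\,2}=\mu(t)-\mu(s)$ using the non-commutative product rule, the paper phrasing this as a first-order expansion in $\eps$ and you phrasing it directly. Your preliminary check that $\mu(t)-\mu(s)\in\S^\D_+$ (via \ref{i.xi_incre} and the symmetry of $\nabla\xi$ on symmetric arguments) is a sensible addition that the paper leaves implicit.
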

The left-hand side of the above display is a sum of two matrix multiplications.
\begin{proof}
Write $\bar \mu(s)=\mu(t)-\mu(s)$.
Since
\begin{align*}
    \bar\mu(s+\eps) = \Ll(\sqrt{\bar\mu(s+\eps)}\Rr)^2 = \Ll(\sqrt{\bar\mu(s)}+\eps \Ll(\sqrt{\bar \mu}\Rr)'(s) + o(\eps)\Rr)^2 
    \\
    = \bar\mu(s) +\eps \Ll(\sqrt{\bar\mu(s)}\sqrt{\bar \mu}'(s) + \sqrt{\bar \mu}'(s)\sqrt{\bar\mu(s)}\Rr)+o(\eps)
\end{align*}
and $\bar\mu(s+\eps) = \bar\mu(s) - \eps \gamma(s)+o(\eps)$,
sending $\eps \to0$, we obtain the announced identity.
\end{proof}

We will fix the initial condition to be
\begin{align}\label{e.phi=initial}
    \phi(x) = \log\int \exp\left(x\cdot\sigma\right) \d P_1(\sigma),\quad\forall x\in \R^\D.
\end{align}
The initial condition in~\eqref{e.Phi(1,x)=_intro} corresponds to the above with $\tilde P_1^z$ in~\eqref{e.tildeP_1} substituted for $P_1$. 
We only need this substitution later in Section~\ref{s.pf_main}.
Here, for generality, we work with~\eqref{e.phi=initial}.

Throughout, the gradient operator $\nabla =(\partial_{x_i})_{i=1}^\D$ and the Hessian operator $\nabla^2 = (\partial^2_{x_i,x_j})_{i,j=1}^\D$ take derivatives in $x \in \R^\D$. For a differentiable function $g:\R^\D\to\R$, $\nabla g$ takes value in $\R^\D$ and $\nabla^2g$ takes value in $\S^\D$.

\begin{remark}\label{r.condition_PDE}
In this section, we only assume that $P_1$ satisfies~\eqref{e.supp_P_1}. Condition~\eqref{e.not_Dirac} is not needed. For $\Psi \in \Pi_\mathrm{Lip}(z)$, we assume~\eqref{e.weak_gamma>0} instead of conditions in the main theorems. This choice makes the Parisi PDE solution well-defined in the Potts model with quadratic interaction (see Lemma~\ref{l.weak_cond_Potts2}).
\end{remark}

\subsection{PDE for discrete paths}

\begin{definition}[Parisi PDE solution for $\alpha\in\cM_\d$]\label{d.sol_pde_discrete}

For $P_1$ satisfying~\eqref{e.supp_P_1}, $\Psi\in\Pi_\mathrm{Lip}(z)$ satisfying~\eqref{e.weak_gamma>0}, and $\alpha\in\cM_\d$ of the form~\eqref{e.alpha=}, the corresponding \textit{Parisi PDE solution} $\Phi=\Phi_{P_1,\Psi,\alpha}$ is defined as follows:
\begin{itemize}
    \item Set $\Phi(1,\cdot) = \phi$ given in~\eqref{e.phi=initial}.
    \item Inductively, for every $l\in \{1,\cdots,K\}$, $s\in [q_{l-1},q_l)$ and $x\in\R^\D$, define
    \begin{align}\label{e.Phi(s,x)}
        \Phi(s,x) = \frac{1}{m_{l-1}}\log\E\exp\left(\m_{l-1} \Phi\left(q_l,\,\sqrt{\mu(q_l)-\mu(s)}g_l+x\right)\right)
    \end{align}
    where $(g_l)_{l\in\{0,\dots,K\}}$ are independent standard $\R^\D$-valued Gaussian vectors.
\end{itemize}
The right-hand side of~\eqref{e.Phi(s,x)} is understood to be $\E \Phi(q_l, \sqrt{\mu(q_l)-\mu(s)}g_l+x)$ if $m_{l-1}=0$.

\end{definition}

By the Cole--Hopf transformation, the next lemma shows that $\Phi_{P_1,\Psi,\alpha}$ indeed satisfies the PDE at continuity points of $\alpha$.
\begin{lemma}\label{l.PDE}
Let $\Phi = \Phi_{P_1,\Psi,\alpha}$ for $\alpha \in \cM_\d$ given in~\eqref{e.alpha=}.
For every $l\in\{1,\dots,K\}$ and every $(s,x)\in [q_{l-1},q_l)\times \R^\D$,
\begin{align*}
    \partial_s \Phi(s,x) + \frac{1}{2}\la \gamma(s)\,,\,\nabla^2\Phi(s,x)+\alpha(s)(\nabla \Phi(s,x))(\nabla \Phi(s,x))^\intercal\ra_{\S^D}=0.
\end{align*}
\end{lemma}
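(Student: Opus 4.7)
The plan is to perform the classical Cole--Hopf substitution on each interval $[q_{l-1},q_l)$, which converts the nonlinear Parisi PDE into a linear backward heat equation with time-dependent covariance. Fix $l\in\{1,\dots,K\}$ and assume first that $m_{l-1}>0$. Set $u(s,x):=\exp\bigl(m_{l-1}\Phi(s,x)\bigr)$. The recursion~\eqref{e.Phi(s,x)} of Definition~\ref{d.sol_pde_discrete} rewrites as
\begin{align*}
u(s,x)=\E\bigl[u\bigl(q_l,\,x+A(s)g_l\bigr)\bigr],\qquad A(s):=\sqrt{\mu(q_l)-\mu(s)}\in\S^\D_+,
\end{align*}
which exhibits $u(s,\cdot)$ as the convolution of $u(q_l,\cdot)$ with the centered Gaussian density on $\R^\D$ of covariance $\Sigma(s):=A(s)^2=\mu(q_l)-\mu(s)$.

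Next I would check that $u$ satisfies a linear heat equation in $s$. By~\eqref{e.weak_gamma>0} the map $s\mapsto A(s)$ is differentiable on $[q_{l-1},q_l)$, and Lemma~\ref{e.d+d=-gamma} (with $t=q_l$) gives $\dot\Sigma(s)=-\gamma(s)$. After justifying differentiation under the expectation --- for which I would argue by backward induction on $l$ that $\Phi(q_l,\cdot)$ is smooth with derivatives of at most polynomial growth, the base case $\phi$ being real-analytic by~\eqref{e.supp_P_1} and each backward Gaussian convolution preserving the property --- the standard Fourier (or integration by parts) computation for the Gaussian semigroup yields
\begin{align*}
\partial_s u(s,x)+\tfrac{1}{2}\la\gamma(s),\,\nabla^2 u(s,x)\ra_{\S^\D}=0.
\end{align*}
Concretely, one obtains this identity either from $\hat u(s,\xi)=\hat u(q_l,\xi)\exp(-\tfrac{1}{2}\xi\cdot\Sigma(s)\xi)$, or from $u(s,x)=\int u(q_l,z)\,p_{\Sigma(s)}(z-x)\,\d z$ by differentiating in $s$ and integrating by parts twice in $x$.

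To finish, I substitute back through the Cole--Hopf relation. From $\partial_s u=m_{l-1}u\,\partial_s\Phi$ and $\nabla^2 u=m_{l-1}u\bigl(\nabla^2\Phi+m_{l-1}(\nabla\Phi)(\nabla\Phi)^\intercal\bigr)$, the heat equation becomes
\begin{align*}
m_{l-1}u\Ll[\partial_s\Phi+\tfrac{1}{2}\la\gamma(s),\,\nabla^2\Phi+m_{l-1}(\nabla\Phi)(\nabla\Phi)^\intercal\ra_{\S^\D}\Rr]=0,
\end{align*}
and, since $\alpha(s)=m_{l-1}$ on $[q_{l-1},q_l)$ and $m_{l-1}u>0$, the announced PDE follows. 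The case $m_{l-1}=0$ is handled directly: in this regime $\Phi$ itself is a Gaussian convolution of $\Phi(q_l,\cdot)$, and the same heat computation applied to $\Phi$ (without the exponential) gives $\partial_s\Phi+\tfrac{1}{2}\la\gamma(s),\,\nabla^2\Phi\ra_{\S^\D}=0$, matching the PDE because the quadratic term drops out when $\alpha(s)=0$. I expect the main obstacle to be the regularity step: one must justify differentiation under the expectation, and verify that all derivatives of $\Phi(q_l,\cdot)$ admit bounds that combine with the Gaussian tail of $g_l$ to enable dominated convergence. This is where the compact support hypothesis~\eqref{e.supp_P_1} on $P_1$ is essential, as it guarantees smooth terminal data $\phi$ with controlled growth, propagated backward through the recursion.
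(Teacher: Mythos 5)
Your proof is correct and takes the same Cole--Hopf route as the paper: the paper sets $w(s,x) = \E\exp\bigl(m_{l-1}\Phi(q_l, \sqrt{\mu(q_l)-\mu(s)}g_l + x)\bigr)$ and verifies $\partial_s w + \tfrac12\langle\gamma,\nabla^2 w\rangle = 0$ by Gaussian integration by parts (using Lemma~\ref{e.d+d=-gamma} exactly as you do), then unwinds the exponential, which is precisely your convolution/integration-by-parts computation. Two minor notes: since $\phi$ is Lipschitz, $u(q_l,\cdot)=e^{m_{l-1}\Phi(q_l,\cdot)}$ grows exponentially and is not tempered, so the Fourier alternative you mention is not directly available and the integration-by-parts route is the one to use; and you are right to treat $m_{l-1}=0$ separately, since the paper's Cole--Hopf substitution degenerates there and the text only covers this boundary case via the convention stated after Definition~\ref{d.sol_pde_discrete}.
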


In the display, since $\nabla \Phi$ is $\R^\D$-valued, $(\nabla\Phi)(\nabla\Phi)^\intercal$ is $\S^\D$-valued. 

\begin{proof}
Fix any $l$ and let $s\in [q_{l-1},q_l)$. Set 
\begin{align*}
    w(s,x) = \E\exp\left(\m_{l-1} \Phi\left(q_l,\,\sqrt{\mu(q_l)-\mu(s)}g_l+x\right)\right).
\end{align*}
Using the Gaussian integration by parts (see \cite[Theorem~4.6]{HJbook}) and Lemma~\ref{e.d+d=-gamma}, we can compute
\begin{align*}
    &\partial_s w(s,x)  = \E\Ll[ \m_{l-1}\la \Ll(\frac{\d}{\d s}\sqrt{\mu(q_l)-\mu(s)}\Rr) g_l,\,\nabla \Phi \ra_{\R^\D}e^{\m_{l-1}\Phi}\Rr]
    \\
    &= \E \Ll[ \la -\frac{1}{2}\gamma(s),\  \left(\m_{l-1}^2  (\nabla \Phi )(\nabla \Phi )^\intercal + \m_{l-1}  {\nabla^2\Phi } \right)\ra_{\S^\D}e^{\m_{l-1} \Phi }\Rr],
\end{align*}
where we used the shorthand $\Phi= \Phi\left(q_l,\,\sqrt{\mu(q_l)-\mu(s)}g_l+x\right)$.
We also have
\begin{align*}
    \nabla^2 w(s,x) = \E \Ll[\left(\m_{l-1}^2(\nabla \Phi )(\nabla \Phi )^\intercal+\m_{l-1} \nabla^2\Phi  \right)e^{\m_{l-1}\Phi }\Rr].
\end{align*}
Therefore, we get
\begin{align*}
    \partial_s w(s,x)  + \la \frac{1}{2}\gamma(s), \nabla^2w(s,x)\ra_{\S^D}=0.
\end{align*}
Rewriting $w$ in terms of $\Phi$, we have
\begin{align*}
    \partial_s \Phi + \la \frac{1}{2}\gamma(s),\, \nabla^2 \Phi+ \m_{l-1} (\nabla \Phi)(\nabla \Phi)^\intercal\ra_{\S^D}=0,\quad \text{on $[q_{l-1},q_l)\times\R^\D$}.
\end{align*}
Since $\m_{l-1} = \alpha(s)$ for $s\in[q_{l-1},q_l)$, we have verified the equation. 
\end{proof}

To extend the definition to general $\alpha$, we need the continuity of the functional $\cM_\d \ni\alpha\mapsto \Phi_{P_1,\Psi,\alpha}$. For this, we represent $\Phi_{P_1,\Psi,\alpha}$ in terms of the Ruelle probability cascade (RPC) and use probabilistic methods. We start by introducing the RPC.

\subsection{Ruelle probability cascades}

For $K\in\N$, set
\begin{align*}
    \mathscr{A}=\N^0 \cup \N^1\cup\cdots \cup \N^K
\end{align*}
and we view it as the index set of vertices on a rooted tree, where $\N^0 =\{\emptyset\}$ contains the root and $\N^K$ is the set of leaves. For each leaf $\bal = (n_1,n_2,\dots,n_K) \in \N^K$, we write
\begin{align*}
    p(\bal) = \Ll\{n_1,\, (n_1,n_2),\, \dots,\, (n_1,n_2,\dots,n_K)\Rr\}
\end{align*}
consisting of indices on the shortest path from the root to $\bal$.
For each $\beta\in \mathscr{A}$, we denote by $|\beta|$ the length of $\beta$ satisfying $\beta \in \N^{|\beta|}$.
For $\bal,\bal'\in\mathscr{A}$, we set
\begin{align*}
    \bal\wedge\bal' = |p(\bal)\cap p(\bal')| = \max\{|\beta|:\: \beta\in p(\bal)\cap p(\bal')\}.
\end{align*}
Let $\{\nu_\bal\}_{\bal\in \N^K}$ be the random weights of the RPC given in \cite[(2.46)]{pan} associated with $(m_l)_{l=0}^K$ in~\eqref{e.m,q}. For more detail on its construction, we refer to  \cite[Section~2.3]{pan} or \cite[Section~5.6]{HJbook}.
Instead of $(q_l)_{l=1}^K$ in~\eqref{e.m,q}, we consider a slightly more general sequence
\begin{align}\label{e.q}
    0\leq q_0\leq q_1< \dots< q_K=1.
\end{align}
The only difference is that we allow $q_0\geq 0$.

Fix an infinite-dimensional separable Hilbert space $\mathfrak{H}$. We denote the inner product by a dot. Let $(e_\beta)_{\beta \in \mathscr{A}\setminus \N^0}$ be an orthonormal basis of $\mathfrak{H}$.
For each $\bal\in \N^K$, we set
\begin{align}\label{e.h_alpha=}
    \bh_\bal = \sum_{\beta \in p(\bal)}\sqrt{q_{|\beta|}-q_{|\beta|-1}}e_\beta.
\end{align}
Let $G$ be a random probability measure supported on $(\bh_\bal)_{\bal\in\N^K}$ given by $G(\bh_\bal)= \nu_\bal$. 
Let $(g_\beta)_{\beta\in \mathscr{A}\setminus \N^0}$ be a collection of independent standard $\R^\D$-valued Gaussian vectors.
For each $\bal$, we set
\begin{align}\label{e.Z(h_alpha)=}
    Z(\bh_\bal) = \sum_{\beta \in p(\bal)}\sqrt{\mu(q_{|\beta|})-\mu(q_{|\beta|-1})}g_\beta.
\end{align}
Hence, conditioned on $G$, $(Z(\bh))_{\bh\in \supp G}$ is a centered $\R^\D$-valued Gaussian vector with covariance
\begin{align*}
    \E Z(\bh_\bal)Z(\bh_{\bal'})^\intercal = \mu(q_{\bal\wedge\bal'}) -\mu(q_0)=\mu(\bh_\bal\cdot \bh_{\bal'})-\mu(q_0),
\end{align*}
where we denote the inner product in $\mathfrak{H}$ by a dot.

We also need a continuous version of the Ruelle probability cascade, which will be used to prove the continuity of $\alpha\mapsto \Phi_{P_1,\Psi,\alpha}$ later in Section~\ref{s.Lipschitz} and to define the Parisi functional in Section~\ref{s.pf_main}. 
Let $\mathfrak{R}$ be the RPC on $\mathfrak{H}$ with the overlap $\bh\cdot \bh'$ for $(\bh, \bh')$ sampled from $\E\mathfrak{R}^{\otimes}$ distributed uniformly on $[0,1]$. The existence of $\mathfrak{R}$ is given by \cite[Theorem~2.17]{pan}.
For each $\pi\in\Pi$ in~\eqref{e.Pi}, conditioned on $\mathfrak{R}$, let $(w^{\pi}(\brho))_{\brho\in\supp\mathfrak{R}}$ be a centered $\R^\D$-valued Gaussian process with covariance
\begin{align}\label{e.w^pi}
    \E w^{\pi}(\brho)w^{\pi}(\brho')^\intercal = \pi(\brho\cdot\brho').
\end{align}
The existence and the measurability of this process has been verified in \cite[Section~4]{HJ_critical_pts} (see also \cite[Remark~4.9]{HJ_critical_pts}).

The Parisi functional is usually expressed in terms of the discrete cascade, while sometimes it is easier to work with the continuous one. The following lemma allows us to relate the two.
\begin{lemma}\label{l.RPC_equiv}
For every $\alpha \in \cM_\d$,
\begin{align*}
    \E \log  \iint\exp\Ll(\sigma\cdot Z(\bh)\Rr)\d P_1(\sigma)\d G(\bh)  = \E \log \iint\exp\Ll(\sigma\cdot w^\pi(\brho)\Rr)\d P_1(\sigma)\d \mathfrak{R}(\brho)
\end{align*}
where $\pi(s) = \mu\circ\zeta(s) - \mu\circ\zeta(0)$ for $s\in[0,1]$.
\end{lemma}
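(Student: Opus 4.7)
My plan is to prove the identity by coarse-graining the continuous cascade $\mathfrak{R}$ on the finite grid $(m_l)_{l=0}^K$ associated with $\alpha$ and matching the resulting structure, in joint distribution, with the discrete RPC $(G,Z)$ indexed by the tree $\mathscr{A}$. Both random integrals in the lemma then turn out to be equal in law as random variables (not just in expectation of their logarithm).

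First, I would use the construction of the continuous RPC, as in \cite[Chapter~2]{pan}, to extract nested equivalence classes on $\supp \mathfrak{R}$: for each level $l\in\{0,1,\dots,K-1\}$, declare $\brho\sim_l\brho'$ iff $\brho\cdot\brho'\geq m_l$. The aggregate masses of these nested clusters form, in joint distribution, a discrete RPC with parameters $(m_l)_{l=0}^K$, so the finest clusters can be put in measurable, measure-preserving correspondence with $(\nu_\bal)_{\bal\in\N^K}$. This is the step that reconciles the uncountable support of $\mathfrak{R}$ with the countable atomic support of $G$.

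Next, I would match the Gaussian fields. Since $\zeta(s)=q_l$ on $(m_{l-1},m_l]$ by~\eqref{e.alpha^-1=} and $\zeta(0)=q_0$, the path $\pi$ is piecewise constant with $\pi(s)=\mu(q_l)-\mu(q_0)$ for $s\in(m_{l-1},m_l]$. Thus whenever $\brho,\brho'$ lie in a common level-$(l-1)$ cluster but in distinct level-$l$ sub-clusters, their overlap lies in $(m_{l-1},m_l]$, so $\E w^\pi(\brho)w^\pi(\brho')^\intercal=\mu(q_l)-\mu(q_0)$. Using this compatibility with the tree partition, I would represent $w^\pi(\brho)$, conditionally on $\mathfrak{R}$, as
\begin{align*}
    w^\pi(\brho) \;=\; \sum_{l=1}^K \sqrt{\mu(q_l)-\mu(q_{l-1})}\,\tilde g^{\,\brho}_l,
\end{align*}
where $\tilde g^{\,\brho}_l$ depends on $\brho$ only through its level-$l$ cluster and is independent across distinct clusters and across levels. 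Under the identification from Step~1, this is exactly the root-to-leaf decomposition~\eqref{e.Z(h_alpha)=} defining $Z(\bh_\bal)$.

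With the weights and Gaussian fields matched, $\iint \exp(\sigma\cdot Z(\bh))\d P_1\d G$ and $\iint \exp(\sigma\cdot w^\pi(\brho))\d P_1\d \mathfrak{R}$ have the same law, and the identity follows by taking $\E\log$. The hard part will be Step~1: justifying the coarse-graining rigorously from the nested Poisson-point-process construction of $\mathfrak{R}$, so that the aggregated cluster weights genuinely have the correct Poisson--Dirichlet joint law at each level. Step~2 then reduces to a covariance check, provided the auxiliary Gaussian increments $\tilde g^{\,\brho}_l$ can be defined coherently on the enlarged probability space carrying $\mathfrak{R}$; the independence across clusters and levels is what makes the discrete and continuous partition functions coincide atom-by-atom.
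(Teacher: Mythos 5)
Your proposal takes a genuinely different route from the paper. You aim for an explicit coupling: coarse-grain the continuous cascade $\mathfrak{R}$ into nested clusters at thresholds $(m_l)$, argue that the aggregate cluster weights reproduce the discrete RPC weights $(\nu_\bal)$ in joint law, and then decompose the Gaussian field $w^\pi$ along the induced tree so that the two partition functions are literally equal in distribution as random variables. The paper instead runs a soft argument: by \cite[Theorem~1.3]{pan} it approximates each side by $\E$-integrals of bounded continuous functions of a \emph{finite} overlap array; it then checks that the overlap $R^{1,2}$ under $\E G^{\otimes 2}$ and the rescaled overlap $Q^{1,2}=\zeta(\brho^1\cdot\brho^2)$ under $\E\mathfrak{R}^{\otimes 2}$ have the same one-dimensional law (both equal $\d\alpha$), notes that both full overlap arrays satisfy the Ghirlanda--Guerra identities, and concludes by the uniqueness theorem \cite[Theorem~2.13]{pan} that the arrays agree in distribution. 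What your approach buys is a stronger conclusion (equality of laws of the entire random measures, not just of $\E\log$); what the paper's approach buys is that it entirely avoids touching the internal construction of $\mathfrak{R}$, leaning only on two black-box characterization results.

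That said, there is a real gap in your proposal as written, which you yourself flag as ``the hard part.'' Your Step~1 is not a routine coarse-graining computation: the continuous RPC in this paper is defined abstractly (via \cite[Theorem~2.17]{pan} and \cite[Section~4]{HJ_critical_pts}) rather than through an explicit nested Poisson point process. To prove that the aggregated cluster masses at thresholds $(m_l)$ have exactly the discrete Poisson--Dirichlet cascade joint law, you would end up invoking precisely the characterization of RPCs via their overlap distribution and the Ghirlanda--Guerra identities --- i.e.\ the same machinery the paper uses, but now routed through a heavier explicit-construction argument. Also, one small inaccuracy: $\mathfrak{R}$ has countable atomic support (it is a random probability measure on $\mathfrak{H}$ supported on countably many points), so the tension you invoke between ``uncountable support of $\mathfrak{R}$'' and ``countable atomic support of $G$'' is not actually there; the real issue is matching the joint law of the atom weights. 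Until Step~1 is carried out rigorously, your argument is a plausible sketch but not a complete proof, whereas the paper's route closes the lemma using only cited theorems.
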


In the above, $\E$ on the left (resp.\ right) integrates the Gaussian randomness in $Z(\bh)$ (resp.\ $w^\pi(\brho)$) and the randomness in $G$ (resp.\ $\mathfrak{R}$).

\begin{proof}
We denote the left- and right-hand sides by $\mathrm{LHS}$ and $\mathrm{RHS}$, respectively.
Let $(\bg^l)_{l\in\N}$ and $(\brho^l)_{l\in\N}$ be sampled from $G^{\otimes\infty}$ and $\mathfrak{R}^{\otimes\infty}$, respectively. We write $R^{l,l'} = \bg^l\cdot \bg^{l'}$ and $Q^{l,l'} = \zeta(\brho^l\cdot\brho^{l'})$. 
Notice
\begin{align*}
    \E Z\Ll(\bg^l\Rr)Z\Ll(\bg^{l'}\Rr)^\intercal = \mu\Ll(R^{l,l'}\Rr)-c,\qquad \E w^\pi\Ll(\brho^l\Rr)w^\pi\Ll(\brho^{l'}\Rr)^\intercal = \mu\Ll(Q^{l,l'}\Rr)-c
\end{align*}
where $c=\mu(q_0)=\mu\circ\zeta(0)$.
Hence, the two sides in the announced identity are of the same structure. By the straightforward adaptation of \cite[Theorem~1.3]{pan}, for every $\eps>0$, there are some $n\in\N$ and a bounded continuous function $F_\eps:\R^{n\times n}\to \R$ such that
\begin{align*}
    \Ll|\mathrm{RHS}-\E \int F_\eps \Ll(R^n\Rr)\d G^{\otimes n} \Rr| \leq \eps,\qquad
    \Ll|\mathrm{LHS}-\E \int F_\eps \Ll(Q^n\Rr)\d \mathfrak{R}^{\otimes n} \Rr| \leq \eps
\end{align*}
where $R^n = \Ll(R^{l,l'}\Rr)_{l,l'\leq n}$ and $Q^n = \Ll(Q^{l,l'}\Rr)_{l,l'\leq n}$.

Then, we want to show that $R^n$ and $Q^n$ have the same distribution. For any continuous bounded function $g:\R\to\R$, by the standard result on the discrete cascade \cite[(2.81)]{pan}, we have
\begin{align*}
    \E G^{\otimes 2}\Ll(g\Ll(R^{1,2}\Rr)\Rr) = \sum_{l=0}^Kg(q_l)(m_l-m_{l-1}) = \int g(s)\d \alpha(s).
\end{align*}  
By the definition of $\mathfrak{R}$, we have
\begin{align*}
    \E \mathfrak{R}^{\otimes 2}\Ll(g\Ll(Q^{1,2}\Rr)\Rr) = \E \mathfrak{R}^{\otimes 2}\Ll(g\Ll(\zeta\Ll(\brho^1\cdot\brho^2\Rr)\Rr)\Rr)=\int_0^1  g(\zeta(s))\d s = \int g(s)\d \alpha(s).
\end{align*}
Hence, the overlap $R^{1,2}$ under $\E G^{\otimes 2}$ has the same distribution of $Q^{1,2}$ under $\E \mathfrak{R}^{\otimes 2}$. Moreover, by construction and the property of the RPC, both $R=\Ll(R^{l,l'}\Rr)_{l,l'\in\N}$ and $Q=\Ll(Q^{l,l'}\Rr)_{l,l'\in\N}$ satisfy the Ghirlanda--Guerra identities \cite[(2.80)]{pan}. Since symmetric random arrays satisfying the Ghirlanda--Guerra identities are uniquely characterized by the distribution of the overlap \cite[Theorem~2.13]{pan}, $R$ under $\E G^{\otimes \infty}$ and $Q$ under $\E \mathfrak{R}^{\otimes \infty}$ have the same distribution. In particular,
\begin{align*}
    \E \int F_\eps\Ll(R^n\Rr)\d G^{\otimes n} = \E \int F_\eps\Ll(Q^n\Rr)\d \mathfrak{R}^{\otimes n}
\end{align*}
and thus $|\mathrm{RHS}- \mathrm{LHS}|\leq 2\eps$. Since $\eps$ is arbitrary, the desired result follows.
\end{proof}
\subsection{Cascade computation}

We review the standard cascade computation and then obtain a representation of $\Phi_{P_1,\Psi,\alpha}$ in terms of $\mathfrak{R}$. 
Define
\begin{align*}
    X_K = \phi\Ll(\sum_{l=1}^K\sqrt{\mu(q_l)-\mu(q_{l-1})}g_l\Rr).
\end{align*}
Inductively, for $l\in \{1,\dots,K\}$, define
\begin{align*}
    X_{l-1} = \frac{1}{m_{l-1}} \log \E_{g_l}\exp\Ll(m_{l-1}X_l\Rr),
\end{align*}
where $\E_{g_l}$ integrates $g_l$.
The next lemma recalls the standard computation.

\begin{lemma}\label{l.X_0=}
Let $X_0$ be defined as above, which is associated with $(m_l)_{l=0}^K$ in \eqref{e.m,q} and $(q_l)_{l=0}^K$ in \eqref{e.q}. Then,
\begin{align*}
    X_0 = \E \log \iint \exp \Ll(\sigma\cdot Z(\bh) \Rr)\d P_1(\sigma) \d G(\bh).
\end{align*}
\end{lemma}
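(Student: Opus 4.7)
First I would integrate out the spin variable $\sigma$ on the right-hand side. By the definition $\phi(x)=\log\int\exp(x\cdot\sigma)\,\d P_1(\sigma)$ in \eqref{e.phi=initial}, the inner integral yields $\int\exp(\sigma\cdot Z(\bh_\bal))\,\d P_1(\sigma)=\exp(\phi(Z(\bh_\bal)))$. Since $G$ is the discrete measure with $G(\bh_\bal)=\nu_\bal$, the right-hand side becomes
\[
\E\log\sum_{\bal\in\N^K}\nu_\bal\,\exp\bigl(\phi(Z(\bh_\bal))\bigr),
\]
where the expectation integrates both the cascade weights $(\nu_\bal)$ and the Gaussians $(g_\beta)_{\beta\in\mathscr{A}\setminus\N^0}$.

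Next I would perform the standard cascade computation level by level, from the leaves toward the root. Define, for each $\beta\in\mathscr{A}$ with $|\beta|<K$, the quantity
\[
\tilde X_{|\beta|}(\beta)=\phi\Bigl(\sum_{\beta'\in p(\beta)}\sqrt{\mu(q_{|\beta'|})-\mu(q_{|\beta'|-1})}\,g_{\beta'}+\sum_{l=|\beta|+1}^{K}\sqrt{\mu(q_l)-\mu(q_{l-1})}\,g_l\Bigr)
\]
appropriately, but more cleanly, apply the key iterative property of the Poisson--Dirichlet cascade weights recalled in \cite[Theorem 2.9 and Section 2.3]{pan}: for any collection of random variables $(X_K(\bal))_{\bal\in\N^K}$ that are functions of $(g_\beta)_{\beta\in p(\bal)}$ and are independent of the cascade weights, one has
\[
\E\log\sum_{\bal\in\N^K}\nu_\bal\,\exp(X_K(\bal))=X_0,
\]
where $X_0$ is obtained by recursively applying $X_{l-1}=\frac{1}{m_{l-1}}\log\E_{g_l}\exp(m_{l-1}X_l)$. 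The proof of this identity uses two ingredients: (i) at each level $l$, the children of a common parent share no Gaussian that the parent does not already see, and their contributions are i.i.d., so averaging them against the Poisson--Dirichlet weights at level $l$ produces exactly the operation $u\mapsto\frac{1}{m_{l-1}}\log\E_{g_l}\exp(m_{l-1}u)$ after using the Laplace-transform formula for the Poisson--Dirichlet process with parameter $m_{l-1}$; (ii) one then propagates this averaging inductively up the tree.

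Finally, applying this identity with $X_K(\bal)=\phi(Z(\bh_\bal))=\phi\bigl(\sum_{\beta\in p(\bal)}\sqrt{\mu(q_{|\beta|})-\mu(q_{|\beta|-1})}g_\beta\bigr)$ matches the definition of $X_K$ in the statement. Indeed, along any fixed path $p(\bal)$ the Gaussians $(g_\beta)_{\beta\in p(\bal)}$ are i.i.d.\ standard $\R^\D$-valued, so joint distribution of $(Z(\bh_\bal),\ldots)$ restricted to one path is the same as that of $\sum_{l=1}^K\sqrt{\mu(q_l)-\mu(q_{l-1})}g_l$, and the inductive relation defining $X_l$ in the statement is exactly the level-$l$ averaging operation.

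The only point requiring care is the bookkeeping between the path-indexed Gaussians $(g_\beta)$ and the level-indexed Gaussians $(g_l)$ in the definition of $X_K$; this is routine because the cascade recursion only involves one-step expectations at each level. No real obstacle is anticipated, since the result is the standard RPC log-partition identity specialized to the tilted initial condition $\phi\circ Z$.
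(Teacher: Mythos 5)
Your proposal is correct and takes essentially the same route as the paper: both invoke the standard RPC log-partition identity (Panchenko's Theorem~2.9), with the $P_1$-integral absorbed into the terminal function $\phi$, and then match the recursive definition of $X_0$ to the cascade average by identifying the path-indexed Gaussians $(g_\beta)_{\beta\in p(\bal)}$ with the level-indexed $(g_l)_{l=1}^K$. The paper's proof is simply a more terse version of the same argument.
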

\begin{proof}
By \cite[Theorem~2.9]{pan},
\begin{align*}
    X_0  = \E \log \sum_{\bal\in \N^K}\nu_\bal \int \exp\Ll(\sigma\cdot \sum_{\beta\in p(\bal)}\sqrt{\mu(q_{|\beta|}) -\mu(q_{|\beta|-1})}g_\beta\Rr)\d P_1(\sigma).
\end{align*}
Using~\eqref{e.h_alpha=},~\eqref{e.Z(h_alpha)=}, and the lines in between, we can rewrite the above into the desired form.
\end{proof}

\begin{lemma}\label{l.Phi(s,x)=RPC}
For $\alpha \in \cM_\d$ and $s\in [0,1)$, let $\bar\alpha = \alpha\mathds{1}_{[s,\infty)}$. Then,
\begin{align*}
    \Phi_{P_1,\Psi,\bar\alpha}(s,x) = \E \log \iint \exp\Ll(\sigma\cdot w^{\pi}(\brho)+\sigma\cdot x\Rr)\d P_1(\sigma)\d \mathfrak{R}(\brho),\quad\forall x\in \R^\D,
\end{align*}
where $\pi(r) = \mu\circ \bar\alpha^{-1}(r) - \mu(s)$ for all $r\in[0,1]$.
\end{lemma}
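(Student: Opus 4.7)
The plan is to unroll the Parisi PDE definition from time $s$ up to time $1$, recognize the resulting nested log-expectation as a discrete cascade, and then convert that cascade to the continuous one via Lemma~\ref{l.RPC_equiv}.

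Let $l$ be such that $s \in [q_{l-1}, q_l)$, and set $\tilde q_0 := s$, $\tilde q_j := q_{l-1+j}$ for $j = 1, \ldots, J := K - l + 1$, $\tilde m_{-1} := 0$, and $\tilde m_j := m_{l-1+j}$ for $j = 0, \ldots, J$. The piecewise-constant function attached to $(\tilde q_j, \tilde m_j)$ is precisely $\bar\alpha$. Iterating Definition~\ref{d.sol_pde_discrete} starting from $(s, x)$ expresses $\Phi_{P_1, \Psi, \bar\alpha}(s, x)$ as the output $Y_0$ of the cascade $Y_J = \phi\bigl(\sum_{j=1}^J \sqrt{\mu(\tilde q_j) - \mu(\tilde q_{j-1})}\, g_j + x\bigr)$ and $Y_{j-1} = \tilde m_{j-1}^{-1} \log \E_{g_j} \exp(\tilde m_{j-1} Y_j)$ (with the usual convention when $\tilde m_{j-1} = 0$). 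Absorbing the shift $x$ into the initial condition via the tilted finite measure $\d P_1^x(\sigma) := \exp(x \cdot \sigma)\, \d P_1(\sigma)$, so that $\phi(y + x) = \log \int \exp(y \cdot \sigma)\, \d P_1^x(\sigma)$, and applying Lemma~\ref{l.X_0=}---whose hypothesis~\eqref{e.q} permits $\tilde q_0 = s \geq 0$---yields
\begin{align*}
\Phi_{P_1, \Psi, \bar\alpha}(s, x) = \E \log \iint \exp\bigl(\sigma \cdot Z(\bh) + \sigma \cdot x\bigr)\, \d P_1(\sigma)\, \d G(\bh),
\end{align*}
where $(Z, G)$ is the discrete cascade attached to $(\tilde q_j, \tilde m_j)_{j=0}^J$.

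I would then invoke Lemma~\ref{l.RPC_equiv} (again with $P_1$ replaced by the tilted $P_1^x$) to pass from $(Z, G)$ to the continuous cascade $(w^\pi, \mathfrak{R})$. Since the distribution function associated to $(\tilde q_j, \tilde m_j)$ is $\bar\alpha$, and the constant in the covariance identity $\E Z(\bg^l) Z(\bg^{l'})^\intercal = \mu(R^{l, l'}) - c$ equals $c = \mu(\tilde q_0) = \mu(s)$, the matching function delivered by the lemma is $\pi(r) = \mu \circ \bar\alpha^{-1}(r) - \mu(s)$, which is the announced formula.

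The only point requiring care is that Lemma~\ref{l.RPC_equiv} is stated for $\alpha \in \cM_\d$ in the parametrization with $q_0 = 0$, whereas here $\tilde q_0 = s$ may be strictly positive. Tracing its proof, however, the essential inputs are (i) the discrete overlap identity $\E G^{\otimes 2}(g(R^{1,2})) = \sum_j g(\tilde q_j)(\tilde m_j - \tilde m_{j-1})$, which still matches the distribution of $\bar\alpha^{-1}(U)$ for $U$ uniform on $[0, 1]$, and (ii) the Ghirlanda--Guerra characterization of the overlap array; both go through verbatim with $c = \mu(s)$ in place of $\mu(0)$. Verifying this mild extension, and observing that the boundary value $\pi(0)$ is irrelevant because the overlap $\brho \cdot \brho'$ equals $0$ with probability zero under $\E\mathfrak{R}^{\otimes 2}$, is the main---and essentially the only substantive---technical step.
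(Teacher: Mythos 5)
Your proof is correct and follows essentially the same route as the paper: unroll Definition~\ref{d.sol_pde_discrete} from time $s$ to time $1$ to produce the cascade recursion with shifted sequences $(\tilde q_j,\tilde m_j)$, absorb $x$ into a tilted $P_1$, and then invoke Lemma~\ref{l.X_0=} followed by Lemma~\ref{l.RPC_equiv}. You are in fact slightly more careful than the paper in one spot: the paper tacitly applies Lemma~\ref{l.RPC_equiv} with $\tilde q_0=s$ possibly positive (i.e.\ outside the letter of $\cM_\d$, though Lemma~\ref{l.X_0=} explicitly allows $q_0\geq 0$ via~\eqref{e.q}), and you correctly note that the proof of Lemma~\ref{l.RPC_equiv} already works at this generality since the constant that appears is $c=\mu(q_0)$ rather than $\mu(0)$; your further remark that the overlap value $0$ has $\E\mathfrak{R}^{\otimes 2}$-measure zero is a sound way to justify that the convention at $r=0$ does not affect the log-expectation.
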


\begin{proof}
Fix $l\in \{0,\dots,K\}$ such that $s\in [q_{l-1},q_l)$. We set $\bar K = K-l+1$,
\begin{align}\label{e.bar_m,bar_q}
    \bar q_k=
    \begin{cases}
        q_{k+l-1},\quad & k\in \{1,\dots,\bar K\},
        \\
        s, \quad & k=0;
    \end{cases}
    \qquad
    \bar m_k =
    \begin{cases}
        m_{k+l-1},\quad & k\in\{0,1,\dots,\bar K\},
        \\
        0,\quad & k=-1.
    \end{cases}
\end{align}
Then, we have $\bar \alpha = \sum_{k=0}^{\bar K} (\bar m_{k}-\bar m_{k-1})\mathds{1}_{[\bar q_k,\infty)}$. 
We define
\begin{align*}
    \bar X_{\bar K} = \phi \Ll(x+ \sum_{k=1}^{\bar K}\sqrt{\mu(\bar q_k)-\mu(\bar q_{k-1})}g_k\Rr)
\end{align*}
and iteratively
\begin{align*}
    \bar X_{k-1} = \frac{1}{\bar m_{k-1}}\log\E_{g_k}\exp\Ll(\bar m_{k-1} \bar X_k\Rr).
\end{align*}
Comparing it with \eqref{e.Phi(s,x)}, we can inductively verify
\begin{align*}
    \bar X_k = \Phi_{P_1,\Psi,\bar\alpha}\Ll(\bar q_k, x+\sum^k_{k'=0}\sqrt{\mu(\bar q_{k'})-\mu(\bar q_{k'-1})}g_{k'}\Rr),\quad\forall k\in\{0,\dots,\bar K\}.
\end{align*}
In particular, $\bar X_0 = \Phi_{P_1,\Psi,\bar\alpha}(s,x)$. Also notice $\bar\alpha^{-1}(0) =s$ due to~\eqref{e.alpha^-1=}. Hence, applying Lemma~\ref{l.X_0=} and Lemma~\ref{l.RPC_equiv} (with $e^{\sigma\cdot x} \d P_1(\sigma)$ substituted for $P_1$) to $\bar\alpha$ and sequences in \eqref{e.bar_m,bar_q}, we can obtain the announced result.
\end{proof}

\begin{corollary}\label{c.EPhi=}
Let $\eta$ be a standard $\R^\D$-valued Gaussian vector.
For every $\alpha\in\cM_\d$,
\begin{align*}
    \E\Ll[\Phi_{P_1,\Psi,\alpha}\Ll(0,\sqrt{\mu(0)}\eta\Rr)\Rr]=\E\log\iint \exp\Ll(\sigma\cdot w^{\mu\circ\zeta}(\brho)\Rr)\d P_1(\sigma)\d\mathfrak{R}(\brho).
\end{align*}
\end{corollary}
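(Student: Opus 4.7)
The plan is to apply Lemma~\ref{l.Phi(s,x)=RPC} at the base point $s=0$ and then absorb the external Gaussian $\sqrt{\mu(0)}\eta$ into the cascade-indexed Gaussian process, so that the combined process has covariance $\mu\circ\zeta$ and therefore matches $w^{\mu\circ\zeta}$ in law.

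First, at $s=0$, the truncated path $\bar\alpha = \alpha\mathds{1}_{[0,\infty)}$ agrees with $\alpha$ on $[0,1]$, so $\bar\alpha^{-1}=\zeta$ and the function $\pi$ in Lemma~\ref{l.Phi(s,x)=RPC} becomes $\pi(r) = \mu\circ\zeta(r) - \mu(0)$. The lemma then gives
\begin{align*}
    \Phi_{P_1,\Psi,\alpha}(0,x) = \E \log \iint \exp\Ll(\sigma\cdot w^{\pi}(\brho)+\sigma\cdot x\Rr)\d P_1(\sigma)\d \mathfrak{R}(\brho),\quad\forall x\in\R^\D.
\end{align*}
Setting $x=\sqrt{\mu(0)}\eta$ with $\eta$ independent of $(\mathfrak{R},w^\pi)$ and taking expectation over $\eta$ as well, one obtains
\begin{align*}
    \E\Ll[\Phi_{P_1,\Psi,\alpha}\Ll(0,\sqrt{\mu(0)}\eta\Rr)\Rr] = \E \log \iint \exp\Ll(\sigma\cdot \tilde w(\brho)\Rr)\d P_1(\sigma)\d\mathfrak{R}(\brho),
\end{align*}
where $\tilde w(\brho) := w^{\pi}(\brho)+\sqrt{\mu(0)}\eta$.

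Next, conditionally on $\mathfrak{R}$, the process $(\tilde w(\brho))_{\brho\in\supp\mathfrak{R}}$ is centered Gaussian (as the sum of two independent centered Gaussian processes) with covariance
\begin{align*}
    \E\Ll[\tilde w(\brho)\tilde w(\brho')^\intercal\,\big|\,\mathfrak{R}\Rr] = \pi(\brho\cdot\brho') + \mu(0) = \mu\circ\zeta(\brho\cdot\brho'),
\end{align*}
which is exactly the covariance defining $w^{\mu\circ\zeta}$ in \eqref{e.w^pi}. Since a centered Gaussian process is determined by its covariance, $\tilde w$ and $w^{\mu\circ\zeta}$ have the same law conditional on $\mathfrak{R}$; substituting into the previous display yields the announced identity.

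\textbf{Main obstacle.} The proof is essentially a Gaussian covariance-matching computation, so no substantial obstacle is anticipated. The two points needing care are (i) verifying that the truncation $\bar\alpha$ at $s=0$ simplifies cleanly so that $\pi + \mu(0) = \mu\circ\zeta$, and (ii) making sure $\eta$ is genuinely independent of $(\mathfrak{R},w^\pi)$ so that the covariances of $w^\pi$ and $\sqrt{\mu(0)}\eta$ may be added conditionally on $\mathfrak{R}$.
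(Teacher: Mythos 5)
Your proposal is correct and follows exactly the paper's own route: apply Lemma~\ref{l.Phi(s,x)=RPC} at $s=0$ (so $\bar\alpha=\alpha$ and $\pi=\mu\circ\zeta-\mu(0)$), then absorb the independent Gaussian $\sqrt{\mu(0)}\eta$ into $w^\pi$ via the identity $w^\pi(\brho)+\sqrt{\mu(0)}\eta\stackrel{\d}{=}w^{\mu\circ\zeta}(\brho)$, which is precisely what the paper states; you merely spell out the covariance addition that the paper leaves implicit.
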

\begin{proof}
Assuming that $\eta$ is independent from all other randomness, by~\eqref{e.w^pi}, we have $w^{\pi}(\brho)+ \sqrt{\mu(0)}\eta \stackrel{\d}{=} w^{\mu\circ\zeta}(\bh)$
for $\pi = \mu\circ\zeta- \mu(0)$. Using this and Lemma~\ref{l.Phi(s,x)=RPC} with $s=0$ (thus $\bar\alpha=\alpha$), we can get the above expression.
\end{proof}

\subsection{Lipschitz continuity}\label{s.Lipschitz}

After the next lemma, we will show that $\cM_\d \ni\alpha \mapsto \Phi_{P_1,\Psi,\alpha}$ is Lipschitz.
\begin{lemma}\label{l.Lipschitz_pi}
For every $\pi,\pi'\in\Pi$ with $\pi(1)= \pi'(1)$,
\begin{align*}
    \Ll|\E \log \iint \exp\Ll(\sigma\cdot w^{\pi}(\brho)\Rr)\d P_1(\sigma)\d \mathfrak{R}(\brho)- \E \log \iint \exp\Ll(\sigma\cdot w^{\pi'}(\brho)\Rr)\d P_1(\sigma)\d \mathfrak{R}(\brho)\Rr|
    \\
    \leq \frac{1}{2}\int_0^1 \Ll|\pi(s)-\pi'(s)\Rr|\d s.
\end{align*}
\end{lemma}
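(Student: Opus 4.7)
My plan is a Guerra-style Gaussian interpolation. Conditional on $\mathfrak{R}$, let $w^\pi$ and $w^{\pi'}$ be \emph{independent} copies of the Gaussian process from~\eqref{e.w^pi} with covariances $\pi$ and $\pi'$, and set
\[
w_t(\brho) := \sqrt{1-t}\,w^\pi(\brho) + \sqrt{t}\,w^{\pi'}(\brho),\qquad t\in[0,1],
\]
so that $w_t$ has covariance $\bigl((1-t)\pi+t\pi'\bigr)(\brho\cdot\brho')$. Define $G(t) := \E\log\iint e^{\sigma\cdot w_t(\brho)}\,\d P_1(\sigma)\,\d\mathfrak{R}(\brho)$; then the statement reduces to $|G(1)-G(0)|\leq\tfrac12\int_0^1|(\pi-\pi')(s)|\,\d s$.

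Next I would compute $G'(t)$ via Gaussian integration by parts. A direct computation gives $\E[\dot w_t(\brho)\,w_t(\brho')^\intercal] = \tfrac12(\pi'-\pi)(\brho\cdot\brho')$, and the standard spin-glass IBP manipulation (applied conditional on $\mathfrak{R}$ and then averaged) yields
\[
G'(t) = \tfrac12\,\E\la(\pi'-\pi)(\brho\cdot\brho)\cdot\sigma\sigma^\intercal\ra_t \;-\; \tfrac12\,\E\la(\pi'-\pi)(\brho^1\cdot\brho^2)\cdot\sigma^1(\sigma^2)^\intercal\ra_t,
\]
where $\la\cdot\ra_t$ is the Gibbs measure on $(\sigma,\brho)$ with density proportional to $e^{\sigma\cdot w_t(\brho)}\,\d P_1(\sigma)\,\d\mathfrak{R}(\brho)$ and $(\sigma^i,\brho^i)$ are independent replicas. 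The diagonal term vanishes because $\brho\cdot\brho=1$ on $\supp\mathfrak{R}$ and, by hypothesis, $\pi(1)=\pi'(1)$. For the off-diagonal term, the unit-ball support of $P_1$ in~\eqref{e.supp_P_1} gives $|\sigma^1(\sigma^2)^\intercal|_F = |\sigma^1|\,|\sigma^2|\leq 1$, so the Frobenius Cauchy--Schwarz inequality yields
\[
|G'(t)|\;\leq\;\tfrac12\,\E\la\bigl|(\pi'-\pi)(\brho^1\cdot\brho^2)\bigr|\ra_t.
\]

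Integrating in $t\in[0,1]$, the statement follows once the right-hand side is bounded by $\tfrac12\int_0^1|(\pi'-\pi)(s)|\,\d s$---that is, once the Gibbs average of a function of the overlap can be replaced by its integral against the uniform overlap distribution of $\brho^1\cdot\brho^2$ under $\E\mathfrak{R}^{\otimes 2}$. This replacement is the step I expect to be the main obstacle, since exponential tilting by $\int e^{\sigma\cdot w_t(\brho)}\,\d P_1(\sigma)$ can a priori reweight the RPC atoms and alter the overlap distribution. My plan is to first treat the \emph{discrete} case: when $\pi,\pi'\in\Pi$ are step functions with a common set of jump points, Lemma~\ref{l.RPC_equiv} lets us rewrite $G(t)$ via a finite Ruelle cascade, and there the Ruelle--Bolthausen--Sznitman invariance of Poisson--Dirichlet weights under independent positive tilts ensures that the tilted cascade at each level is again Poisson--Dirichlet with the same parameters, so the joint overlap distribution is preserved and the bound holds verbatim. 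For general $\pi,\pi'\in\Pi$ with $\pi(1)=\pi'(1)$, I would approximate by discrete step paths with the same endpoint (which are $L^1$-dense in $\Pi$), apply the discrete bound along this sequence to establish a Cauchy property for $G$, and identify the limit with $G(\pi)$, $G(\pi')$ via convergence of the discrete cascade to $\mathfrak{R}$; the Lipschitz estimate then passes to the limit in both sides.
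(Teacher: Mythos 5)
Your interpolation is exactly the paper's: both set $w_t = \sqrt{1-t}\,w^\pi + \sqrt{t}\,w^{\pi'}$ with independent copies, differentiate, kill the diagonal term using $\pi(1)=\pi'(1)$, bound $|\sigma^1(\sigma^2)^\intercal|\leq 1$, and reduce to showing the overlap $\brho^1\cdot\brho^2$ remains uniform under the tilted Gibbs measure. The only divergence is the last step: you propose to derive the RPC invariance by discretizing the cascade and passing to the limit, whereas the paper simply invokes the known invariance of the continuous Ruelle probability cascade under cavity-type tilts (\cite[Theorem~4.4]{pan} together with \cite[Proposition~4.8]{HJ_critical_pts}), which applies directly at every $t$; citing that result would spare you the Cauchy/identification argument you sketch, which, while workable, is more delicate than needed.
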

\begin{proof}
We can assume that $w^\pi(\brho)$ is independent of $w^{\pi'}(\brho)$. For every $r\in[0,1]$, we set
\begin{align*}
    \varphi(r)=\E \log \iint \exp\Ll(\sigma\cdot \Ll(\sqrt{1-r}w^\pi(\brho) + \sqrt{r}w^{\pi'}(\brho)\Rr)\Rr)\d P_1(\sigma)\d \mathfrak{R}(\brho).
\end{align*}
Using the Gaussian integration by parts (see \cite[Theorem~4.6]{HJbook}) and \eqref{e.w^pi}, we can compute
\begin{align*}
    \frac{\d}{\d r}\varphi(r) & = \frac{1}{2}\E \la \sigma\cdot \Ll(\frac{1}{\sqrt{r}}w^{\pi'}(\brho) -\frac{1}{\sqrt{1-r}}w^\pi(\brho)  \Rr)\ra_r
    \\
    & = \frac{1}{2}\E \la \sigma\sigma^\intercal \cdot \Ll(\pi(1)-\pi'(1)\Rr) - \sigma\sigma'^\intercal\cdot \Ll(\pi(\brho\cdot\brho')-\pi'(\brho\cdot\brho')\Rr)\ra_r 
\end{align*}
where $\la\cdot\ra_r$ is the obvious Gibbs measure and $(\sigma',\brho')$ is an independent copy of $(\sigma,\brho)$ under $\la\cdot\ra_r$. 
The assumption \eqref{e.supp_P_1} on $P_1$ implies $|\sigma\sigma'^\intercal|\leq 1$.
The invariance of the RPC \cite[Theorem~4.4]{pan} (and \cite[Proposition~4.8]{HJ_critical_pts}) implies that the law of $\brho\cdot\brho'$ under $\E \la\cdot\ra_r$ is still uniform over $[0,1]$. Using these and $\pi(1)=\pi'(1)$, we get
\begin{align*}
    \Ll|\frac{\d}{\d r}\varphi(r)\Rr|\leq \frac{1}{2}\E \la \Ll|\pi(\brho\cdot\brho')-\pi'(\brho\cdot\brho')\Rr|\ra_r = \frac{1}{2}\int_0^1 \Ll|\pi(s)-\pi'(s)\Rr|\d s
\end{align*}
which yields the announced bound.
\end{proof}

For the SK model, the following Lipschitz result is attributed to \cite{gue03} (see also \cite[Proposition~3.1]{talagrand2006parisi}).
\begin{lemma}\label{l.Lipschitz_phi}
There is a constant $C>0$ depending only on $\xi$ and $\Psi$ such that
\begin{align*}
    \sup_{[0,1]\times \R^\D } \Ll|\Phi_{P_1,\Psi,\alpha}(s,x) - \Phi_{P_1,\Psi,\alpha'}(s,x)\Rr|\leq C\int_0^1\Ll|\alpha(s)-\alpha'(s)\Rr|\d s, \quad\forall \alpha,\alpha'\in \cM_\d.
\end{align*}
\end{lemma}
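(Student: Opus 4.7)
The plan is to combine the RPC representation of Lemma~\ref{l.Phi(s,x)=RPC} with the path-Lipschitz estimate of Lemma~\ref{l.Lipschitz_pi}, then translate back from the quantile function $\bar\alpha^{-1}$ to $\alpha$ using the classical $L^1$-identity between CDFs and their quantile functions.

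Fix $(s,x)\in[0,1]\times\R^\D$. The case $s=1$ is trivial since $\Phi_{P_1,\Psi,\alpha}(1,\cdot)=\phi$ does not depend on $\alpha$, so assume $s\in[0,1)$ and set $\bar\alpha=\alpha\mathds{1}_{[s,\infty)}$ and $\bar\alpha'=\alpha'\mathds{1}_{[s,\infty)}$. A direct inspection of Definition~\ref{d.sol_pde_discrete} shows $\Phi_{P_1,\Psi,\alpha}(s,x)=\Phi_{P_1,\Psi,\bar\alpha}(s,x)$ (and similarly for $\alpha'$), because the recursion \eqref{e.Phi(s,x)} started at level $s\in[q_{l-1},q_l)$ only involves the data $(m_{l-1},\dots,m_K)$ and $(q_l,\dots,q_K)$. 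Applying Lemma~\ref{l.Phi(s,x)=RPC} to both sides and then Lemma~\ref{l.Lipschitz_pi} to the paths
\begin{align*}
\pi(r)=\mu\circ\bar\alpha^{-1}(r)-\mu(s), \qquad \pi'(r)=\mu\circ\bar\alpha'^{-1}(r)-\mu(s),
\end{align*}
which both lie in $\Pi$ (by the monotonicity \ref{i.xi_incre} of $\nabla\xi$ applied along the increasing Lipschitz $\Psi$) and share the common endpoint $\pi(1)=\pi'(1)=\mu(1)-\mu(s)$ (since $\bar\alpha^{-1}(1)=\bar\alpha'^{-1}(1)=1$), yields
\begin{align*}
\left|\Phi_{P_1,\Psi,\alpha}(s,x)-\Phi_{P_1,\Psi,\alpha'}(s,x)\right|\leq \frac{1}{2}\int_0^1 \left|\mu\circ\bar\alpha^{-1}(r)-\mu\circ\bar\alpha'^{-1}(r)\right|\d r.
\end{align*}

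Since $\Psi$ is Lipschitz on $[0,1]$ with values in the bounded set $\{a\in\S^\D_+:a\cleq z\}$ and $\nabla\xi$ is locally Lipschitz by \ref{i.xi_loc_lip}, the composition $\mu=\nabla\xi\circ\Psi$ is Lipschitz with some constant $L$ depending only on $\xi$ and $\Psi$. A short calculation from \eqref{e.alpha^-1=} gives $\bar\alpha^{-1}(r)=\max\{s,\alpha^{-1}(r)\}$ for $r\in(0,1]$, whence the pointwise contraction $|\bar\alpha^{-1}(r)-\bar\alpha'^{-1}(r)|\leq |\alpha^{-1}(r)-\alpha'^{-1}(r)|$. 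Combining these with the classical CDF/quantile identity
\begin{align*}
\int_0^1\left|\alpha^{-1}(r)-\alpha'^{-1}(r)\right|\d r=\int_0^1\left|\alpha(t)-\alpha'(t)\right|\d t
\end{align*}
closes the proof with $C=L/2$.

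The only mildly delicate point is the truncation identity $\Phi_{P_1,\Psi,\alpha}(s,\cdot)=\Phi_{P_1,\Psi,\bar\alpha}(s,\cdot)$, which is what allows Lemma~\ref{l.Phi(s,x)=RPC} to be applied at every $s\in[0,1)$ rather than only at $s=0$. Once this is in hand, the remaining ingredients — the Lipschitzness of $\mu$, the max-contraction on truncated quantile functions, and the $L^1$-identity between a CDF and its quantile function — are all standard.
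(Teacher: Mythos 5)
Your proof is correct and follows essentially the same route as the paper's: truncate $\alpha,\alpha'$ at $s$, invoke Lemma~\ref{l.Phi(s,x)=RPC} to get the RPC representation, apply Lemma~\ref{l.Lipschitz_pi} (noting that the paths share the endpoint $\mu(1)-\mu(s)$ because $\bar\alpha^{-1}(1)=\bar\alpha'^{-1}(1)=1$), bound $|\mu\circ\bar\alpha^{-1}-\mu\circ\bar\alpha'^{-1}|$ by the Lipschitz constant of $\mu$, and close with the $L^1$ CDF--quantile identity. The only small deviations from the paper's write-up are cosmetic: you make explicit the truncation identity $\Phi_{P_1,\Psi,\alpha}(s,\cdot)=\Phi_{P_1,\Psi,\bar\alpha}(s,\cdot)$, which the paper uses silently, and you pass to $\alpha^{-1},\alpha'^{-1}$ via the pointwise contraction $\bar\alpha^{-1}=\max\{s,\alpha^{-1}\}$ before applying the quantile identity, whereas the paper applies the identity directly to $\bar\alpha,\bar\alpha'$ and then drops the indicator; both reductions are equally valid. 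One minor detail worth adding: Lemma~\ref{l.Lipschitz_pi} is applied with the tilted measure $e^{\sigma\cdot x}\,\d P_1(\sigma)$ in place of $P_1$ (as the paper notes), which is harmless since the tilt preserves the support condition~\eqref{e.supp_P_1} on which that lemma's estimate depends.
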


\begin{proof}
Fix any $(s,x)$. If $s=1$, then there is nothing to show. Hence, we assume $s\in[0,1)$. As in the setting of Lemma~\ref{l.Phi(s,x)=RPC}, let $\bar\alpha=\alpha\mathds{1}_{[s,\infty)}$ and $\bar\alpha'=\alpha'\mathds{1}_{[s,\infty)}$. 
Then, we set $\pi=\mu\circ\bar\alpha^{-1}-\mu(s)$ and similarly for $\pi'$.
Due to $\alpha,\alpha'\in \cM_\d$ and~\eqref{e.alpha^-1=}, we have $\bar\alpha^{-1}(1)=\bar\alpha'^{-1}(1)=1$ and thus $\pi(1) =\pi'(1)$. Hence, we can apply Lemma~\ref{l.Phi(s,x)=RPC} and Lemma~\ref{l.Lipschitz_pi} (with $e^{\sigma\cdot x}\d P_1(\sigma)$ substituted for $P_1$) to get
\begin{align*}
    \Ll|\Phi_{P_1,\Psi,\alpha}(s,x) - \Phi_{P_1,\Psi,\alpha'}(s,x)\Rr|\leq \frac{1}{2}\int_0^1 \Ll|\pi(s)-\pi'(s)\Rr|\d s \leq C\int_0^1 \Ll|\bar\alpha^{-1}(s)-\bar\alpha'^{-1}(s)\Rr|\d s
\end{align*}
where $C$ arises from the local Lipschitzness of $\mu$ (as a result of the local Lipschitzness of $\nabla\xi$ in \ref{i.xi_loc_lip} and $\Psi\in \Pi_\mathrm{Lip}(z)$). It is well known (as a result of Fubini's theorem) that 
\begin{align*}
    \int_0^1 \Ll|\bar\alpha^{-1}(s)-\bar\alpha'^{-1}(s)\Rr|\d s = \int_0^1 \Ll|\bar\alpha(s)-\bar\alpha'(s)\Rr|\d s
\end{align*}
which yields the desired bound.
\end{proof}

Recall the metric on $\cM$ in \eqref{e.d_M=}. Therefore, $\alpha\mapsto \Phi_{P_1,\Psi,\alpha}$ is Lipschitz from $\cM_\d$ to $C([0,1]\times \R^\D)$ equipped with the uniform norm. Hence, we can extend Definition~\ref{d.sol_pde_discrete} to general $\alpha\in\cM$.

\begin{definition}[Parisi PDE solution]\label{d.sol_pde}

Given $P_1$ satisfying~\eqref{e.supp_P_1}, $\Psi\in\Pi_\mathrm{Lip}(z)$ satisfying~\eqref{e.weak_gamma>0}, and $\alpha\in\cM$, the associated \textit{Parisi PDE solution} $\Phi_{P_1,\Psi,\alpha}$ is the limit in $C([0,1]\times \R^\D)$ 
of $\Ll(\Phi_{P_1,\Psi,\alpha_n}\Rr)_{n\in\N}$ (given in Definition~\ref{d.sol_pde_discrete}) for any sequence $(\alpha_n)_{n\in\N}$ in $\cM_\d$ converging to $\alpha$ in $\cM$.
\end{definition}

We state an immediate implication of Lemma~\ref{l.Lipschitz_phi}.
\begin{corollary}\label{c.well-defined}
Let $P_1$ satisfy \eqref{e.supp_P_1} and $\Psi\in \Pi_\mathrm{Lip}(z)$ satisfy \eqref{e.weak_gamma>0}. For every $\alpha\in\cM$, $\Phi_{P_1,\Psi,\alpha}$ given in Definition~\ref{d.sol_pde} is well-defined. Namely, for any sequence $(\alpha_n)_{n\in\N}$ in $\cM_\d$ converging to $\alpha$ in $\cM$, the limit of $\Ll(\Phi_{P_1,\Psi,\alpha_n}\Rr)_{n\in\N}$ (given in Definition~\ref{d.sol_pde_discrete}) in $C([0,1]\times \R^\D)$ exists and does not depend on the choice of $(\alpha_n)_{n\in\N}$. As a result, if $\alpha \in \cM_\d$, then $\Phi_{P_1,\Psi,\alpha}$ is equal to the one given in Definition~\ref{d.sol_pde_discrete}.

Moreover, the terminal condition $\Phi_{P_1,\Psi,\alpha}(1,\cdot) = \phi$ given in~\eqref{e.phi=initial} is satisfied for every $\alpha\in\cM$; and there is a constant $C>0$ depending only on $\xi$ and $\Psi$ such that
\begin{align*}
    \sup_{[0,1]\times \R^\D } \Ll|\Phi_{P_1,\Psi,\alpha}(s,x) - \Phi_{P_1,\Psi,\alpha'}(s,x)\Rr|\leq C\int_0^1\Ll|\alpha(s)-\alpha'(s)\Rr|\d s,\quad\forall \alpha,\alpha'\in \cM.
\end{align*}
\end{corollary}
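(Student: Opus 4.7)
The plan is a standard density-and-extension argument: $\cM_\d$ is dense in $(\cM, d_\cM)$ by the remark after~\eqref{e.m,q}, and Lemma~\ref{l.Lipschitz_phi} says the map $\cM_\d \ni \alpha \mapsto \Phi_{P_1,\Psi,\alpha}$ is Lipschitz into the space of continuous functions on $[0,1]\times\R^\D$ endowed with the uniform distance $(f,g)\mapsto \sup|f-g|$ (well-defined on differences even though individual solutions need not be bounded). Fix $\alpha \in \cM$ and any sequence $(\alpha_n) \subset \cM_\d$ with $\alpha_n \to \alpha$ in $d_\cM$. Since $(\alpha_n)$ is then Cauchy, Lemma~\ref{l.Lipschitz_phi} gives
\[
\sup_{[0,1]\times\R^\D} \Ll|\Phi_{P_1,\Psi,\alpha_n} - \Phi_{P_1,\Psi,\alpha_m}\Rr| \leq C\, d_\cM(\alpha_n, \alpha_m) \xrightarrow[n,m\to\infty]{} 0,
\]
so $(\Phi_{P_1,\Psi,\alpha_n})$ is uniformly Cauchy on $[0,1]\times\R^\D$. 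Its uniform limit is therefore continuous, and I would take it as $\Phi_{P_1,\Psi,\alpha}$.

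For independence from the approximating sequence, I would pick two sequences $(\alpha_n'),(\alpha_n'') \subset \cM_\d$ both converging to $\alpha$ in $\cM$ and apply Lemma~\ref{l.Lipschitz_phi} once more:
\[
\sup_{[0,1]\times\R^\D} \Ll|\Phi_{P_1,\Psi,\alpha_n'} - \Phi_{P_1,\Psi,\alpha_n''}\Rr| \leq C\Ll(d_\cM(\alpha_n', \alpha) + d_\cM(\alpha_n'', \alpha)\Rr) \to 0,
\]
so the two limits agree. Specializing this to $\alpha \in \cM_\d$ with one of the sequences constantly equal to $\alpha$ shows that Definition~\ref{d.sol_pde} reproduces Definition~\ref{d.sol_pde_discrete} in that case.

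The terminal condition $\Phi_{P_1,\Psi,\alpha_n}(1,\cdot) = \phi$ holds for every $n$ by Definition~\ref{d.sol_pde_discrete}, and uniform convergence on $[0,1]\times\R^\D$ preserves values at $s=1$, yielding $\Phi_{P_1,\Psi,\alpha}(1,\cdot) = \phi$. The Lipschitz estimate for arbitrary $\alpha, \alpha' \in \cM$ is obtained by choosing approximating sequences $(\alpha_n),(\alpha_n') \subset \cM_\d$, applying Lemma~\ref{l.Lipschitz_phi} to each pair, and passing to the limit using uniform convergence on the left-hand side and continuity of $d_\cM$ on the right-hand side. No step presents a genuine obstacle; the only mild technical point is that $C([0,1]\times\R^\D)$ with the uniform distance is not a Banach space when unbounded functions are allowed, but it is complete for uniformly Cauchy sequences of continuous functions, which is all this argument needs.
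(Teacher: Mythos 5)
Your proof is correct and is exactly the standard density-and-extension argument the paper has in mind when it calls the corollary an ``immediate implication'' of Lemma~\ref{l.Lipschitz_phi}. Your remark about the unboundedness of $\Phi$ and the distinction between the uniform distance and a genuine Banach norm on $C([0,1]\times\R^\D)$ is a nice technical clarification that the paper glosses over.
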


If $\alpha\in\cM$ is continuous, we will show in Proposition~\ref{p.reg_Phi}~\eqref{i.dsdxPhi_exists} that $\Phi_{P_1,\Psi,\alpha}(1,\cdot)$ satisfies the Parisi PDE~\eqref{e.Parisi_PDE} in the classical sense.

\subsection{Regularity of the solution}
Let $\mathscr{I} = \cup_{k\in\{0\}\cup \N} \{1,\dots,D\}^k$ be the set of multi-indices.
For $\bi = (i_1,i_2,\dots,i_k)\in \{1,\dots,D\}^k$ for some $k\in\N$, we set $|\bi|=k$ and $\partial^\bi_x = \partial_{x_{i_1}}\partial_{x_{i_2}}\cdots \partial_{x_{i_k}}$. We set $\partial^\emptyset$ to be identity operator. For $k\in\N$, we write $\nabla^k = \Ll(\partial^\bi_x\Rr)_{\bi:\:|\bi|=k}$ and $\nabla^{\leq k} = \Ll(\partial^\bi_x\Rr)_{\bi:\:1\leq |\bi|\leq k}$.

For $r, r'\in\R$, we write 
$r\vee r' = \max\{r,r'\}$.
Let $(B(t))_{t\geq 0}$ be the standard $\R^\D$-valued Wiener process (i.e.\ $D$-dimensional Brownian motion). 
We consider the $\R^\D$-valued martingale
\begin{align}\label{e.M(s)=}
    M(s) = \int_0^s \sqrt{\gamma(t)}\d B(t) = \Ll(\int_0^s \sum_{j=1}^\D\Ll(\sqrt{\gamma(t)}\Rr)_{kj} \d B_j(t)\Rr)_{1\leq k\leq \D},\quad\forall s\in[0,1].
\end{align}
For $0\leq s\leq t\leq 1$, we have
\begin{align}\label{e.M(t)-M(s)}
    M(t) - M(s) \stackrel{\d}{=} \sqrt{\mu(t)-\mu(s)}\eta
\end{align}
where $\eta$ is the standard $\R^\D$-valued Gaussian vector.
Throughout this subsection, we write $\Phi_\alpha = \Phi_{P_1,\Psi,\alpha}$.
For $(s,x) \in [0,1]\times\R^\D$, we set
\begin{gather}
V_\alpha(s,x) =  \Phi_\alpha(1,x+M(1)-M(s)) -\int_0^1 \Phi_\alpha(s\vee t,x+M(s\vee t)-M(s)) \d \alpha(t).\label{e.V=}
\end{gather}
We emphasize that $V$ depends on $\alpha$.

First, we verify the regularity of the Parisi PDE solution for discrete $\alpha$.
\begin{lemma}\label{l.deri_disc}
For every $\bi\in \mathscr{I}$, there are constants $C_{1,\bi}$ and $C_{0,\bi}$ and, if $|\bi|\geq 1$, a polynomial $F_\bi:\R^{\sum_{k=1}^{|\bi|-1}D^k}\times\R^{\sum_{k=1}^{|\bi|}D^k}\to \R$ such that the following holds for every $\alpha\in\cM_\d$:
\begin{gather}
    \partial^\bi_x \Phi_\alpha \in C([0,1]\times \R^\D),\label{e.d^iPhi_in_C}
    \\
    \sup_{l\in\{1,\dots,K\}} \sup_{[q_{l-1},q_l)\times \R^\D} \Ll|\partial_s \partial^\bi_x \Phi_\alpha\Rr| \leq C_{1,\bi}, \label{e.|d_sd_xPhi|}
\end{gather}
and, if $|\bi|\geq 1$,
\begin{gather}
    \partial^\bi_x \Phi_\alpha(s,x) = \E \Ll[F_\bi\Ll(\nabla^{\leq|\bi|-1}V_\alpha(s,x),\,\nabla^{\leq|\bi|}\phi(x+M(1)-M(s))\Rr)e^{V_\alpha(s,x)}\Rr],\quad\forall (s,x),\label{e.d^iPhi=E}
    \\
    \sup_{[0,1]\times \R^\D} \Ll|\partial^\bi_x\Phi_\alpha\Rr|\leq C_{0,\bi}. \label{e.|d^iPhi|<C}
\end{gather}

\end{lemma}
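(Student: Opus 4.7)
The plan is to establish the four conclusions by a nested induction: an outer backward induction on the grid index $l \in \{0, 1, \ldots, K\}$ together with an inner induction on $|\bi|$. A key preliminary observation is that since $P_1$ is supported on the closed unit ball by~\eqref{e.supp_P_1}, the initial datum $\phi$ in~\eqref{e.phi=initial} is smooth and $\partial^{\mathbf{j}} \phi$ is uniformly bounded on $\R^\D$ for every $\mathbf{j} \in \mathscr{I}$ with $|\mathbf{j}| \geq 1$: indeed $\partial_i \phi(x) = \int \sigma_i \d\nu_x(\sigma)$ for the probability measure $\d\nu_x \propto e^{x \cdot \sigma}\d P_1(\sigma)$ supported on the unit ball, and higher derivatives are polynomials in the moments of $\sigma$ under $\nu_x$, each bounded by a constant depending only on $|\mathbf{j}|$.

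The core of the argument is the representation formula~\eqref{e.d^iPhi=E}. Using~\eqref{e.M(t)-M(s)}, the definition~\eqref{e.Phi(s,x)} reads on $[q_{l-1}, q_l)$ as the Cole--Hopf identity $e^{m_{l-1}\Phi_\alpha(s,x)} = \E\exp(m_{l-1}\Phi_\alpha(q_l, x + M(q_l) - M(s)))$, where I write $\Phi_\alpha = \Phi_{P_1,\Psi,\alpha}$. For $|\bi| = 1$ and $l = K$, differentiating in $x$ gives $\partial_i \Phi_\alpha(s,x) = \E[\partial_i \phi(x + M(1) - M(s))\, e^{m_{K-1}(\phi(x+M(1)-M(s)) - \Phi_\alpha(s,x))}]$, and a direct computation identifies the exponent as $V_\alpha(s,x)$ from~\eqref{e.V=}. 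For $l < K$, differentiation yields $\partial_i \Phi_\alpha(s,x) = \E[\partial_i \Phi_\alpha(q_l, y)\, e^{m_{l-1}(\Phi_\alpha(q_l, y) - \Phi_\alpha(s,x))}]$ with $y = x + M(q_l) - M(s)$; plugging in the induction hypothesis for $\partial_i \Phi_\alpha(q_l, y)$ and using independence of Brownian increments to push out the inner Gaussian expectation, the combined exponent telescopes to $V_\alpha(s,x)$. For $|\bi| > 1$, the same pattern applies, with Fa\`a di Bruno's formula (applied to $\tfrac{1}{m_{l-1}}\log$) producing, inductively, an explicit polynomial $F_\bi$ in lower-order derivatives of $V_\alpha$ and $\phi$.

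The uniform bound~\eqref{e.|d^iPhi|<C} then follows since $e^{V_\alpha(s,x)}$ is a probability density under the Brownian expectation: the identity $\E[e^{V_\alpha(s,x)}] = 1$ is verified by exactly the iterated Cole--Hopf used above. Combined with the boundedness of $\partial^\mathbf{j} \phi$ and (inductively) of lower-order $\partial^\mathbf{j} \Phi_\alpha$, the polynomial $F_\bi$ inside the expectation is controlled by a constant depending only on $\xi$, $\Psi$, and $|\bi|$. Continuity~\eqref{e.d^iPhi_in_C} inside each cell follows from the representation via dominated convergence, while continuity across the grid points $\{q_l\}$ is immediate from the Cole--Hopf identity itself. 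Finally,~\eqref{e.|d_sd_xPhi|} is obtained by differentiating the Parisi PDE of Lemma~\ref{l.PDE} in $x$: this expresses $\partial_s \partial^\bi_x \Phi_\alpha$ as a polynomial in $\partial^\mathbf{j}_x \Phi_\alpha$ for $1 \leq |\mathbf{j}| \leq |\bi| + 2$ with coefficients involving $\gamma(s)$ and $\alpha(s)$, both uniformly bounded, and all such derivatives have been controlled by the preceding steps.

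The main obstacle is the algebraic verification that the exponents telescope to exactly $V_\alpha(s,x)$ as one passes from $l$ to $l-1$. The expression~\eqref{e.V=} is designed precisely so that $e^{V_\alpha(s,x)}$ is the Radon--Nikodym derivative of the Gibbs-type tilt produced by iterating the Cole--Hopf transformation across all intervals. Once this is recognised, telescoping is routine, but the combinatorial bookkeeping through Fa\`a di Bruno at higher $|\bi|$ is where most of the technical work resides.
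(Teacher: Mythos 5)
Your proposal follows essentially the same route as the paper: establish uniform boundedness of $\nabla^{\le k}\phi$ via the compact support of $P_1$, prove the representation~\eqref{e.d^iPhi=E} by backward induction on the grid index using the telescoping identity for $V_\alpha$, deduce the uniform bounds from $\E\,e^{V_\alpha}=1$, and then get~\eqref{e.d^iPhi_in_C} and~\eqref{e.|d_sd_xPhi|} from the bounds and from Lemma~\ref{l.PDE}. The paper isolates $\E\,e^{V_\alpha(s,x)}=1$ and the bound on $\partial^{\bi}_x\phi$ as separate lemmas, but the underlying argument is identical to yours.
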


This is basically \cite[Proposition~5]{auffinger2015properties}.
The difference is that there $\D =1$ and $\phi =\log\cosh$ and specific properties of $\log\cosh$ were used. In fact, one only needs the following property of $\phi$.

\begin{lemma}\label{l.|dphi|<infty}
For every $\bi\in\mathscr{I}$ with $|\bi|\geq 1$, $\sup_{\R^\D} |\partial^\bi_x\phi|<\infty$.
\end{lemma}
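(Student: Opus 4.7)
The plan is to express each mixed partial $\partial^\bi_x\phi$ as a polynomial in moments under a tilted probability measure whose support is contained in the unit ball of $\R^\D$, and then invoke compactness to bound each such derivative uniformly in $x$.

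First I will set $Z(x) = \int \exp(x\cdot\sigma)\,\d P_1(\sigma)$ so that $\phi = \log Z$. Since $P_1$ is a nonzero finite measure supported in the unit ball by~\eqref{e.supp_P_1}, $Z$ is smooth on $\R^\D$ and strictly positive (derivatives pass under the integral by dominated convergence), so $\phi$ is smooth. I will introduce the tilted probability measure $\d P_x(\sigma) = Z(x)^{-1}\exp(x\cdot\sigma)\,\d P_1(\sigma)$, which is again supported in the unit ball, and write $\la f\ra_x = \int f\,\d P_x$. For $\bi' = (i'_1,\dots,i'_k)\in\mathscr{I}$ with $k\geq 1$, set $\sigma^{\bi'} = \sigma_{i'_1}\cdots\sigma_{i'_k}$.

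Next I will establish by induction on $|\bi|\geq 1$ the existence of a polynomial $Q_\bi$, depending only on $\bi$, such that
\[
\partial^\bi_x \phi(x) = Q_\bi\bigl((\la \sigma^{\bi'}\ra_x)_{1\leq |\bi'|\leq |\bi|}\bigr), \qquad x\in\R^\D.
\]
The base case is $\partial_{x_i}\phi = \partial_{x_i}Z/Z = \la\sigma_i\ra_x$. The induction step rests on the identity
\[
\partial_{x_j}\la f\ra_x = \la f\sigma_j\ra_x - \la f\ra_x\la\sigma_j\ra_x,
\]
valid for any bounded measurable $f$, which follows from the quotient rule and $\partial_{x_j}Z/Z = \la\sigma_j\ra_x$. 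Applied to $f=\sigma^{\bi'}$, it shows that differentiating a moment produces moments of order at most one larger, so differentiating $Q_\bi$ in $x_j$ yields a polynomial in moments of order at most $|\bi|+1$.

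Finally, since $P_x$ is supported in the unit ball, $|\la\sigma^{\bi'}\ra_x|\leq 1$ for every $x\in\R^\D$ and every $\bi'$, so
\[
\sup_{x\in\R^\D}\bigl|\partial^\bi_x\phi(x)\bigr| \leq \sup\bigl\{|Q_\bi(y)| : y\in [-1,1]^{N_\bi}\bigr\}<\infty,
\]
where $N_\bi$ is the number of multi-indices $\bi'$ with $1\leq |\bi'|\leq |\bi|$, and finiteness follows from continuity of $Q_\bi$ together with compactness of the cube. The only step that requires any care is the bookkeeping in the induction; it is routine and presents no real obstacle.
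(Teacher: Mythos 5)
Your proof is correct and follows essentially the same route as the paper: introduce the tilted Gibbs measure $\la\cdot\ra_x$, show inductively that $\partial^\bi_x\phi$ is a fixed polynomial expression in moments of $\sigma$ under $\la\cdot\ra_x$, and then use the support condition~\eqref{e.supp_P_1} to bound those moments uniformly in $x$. The only cosmetic difference is that the paper writes the derivative as $\la G_\bi\ra_x$ with $G_\bi$ a polynomial evaluated on independent replicas of $\sigma$, whereas you write it as a polynomial in the scalar moments $\la\sigma^{\bi'}\ra_x$; these are equivalent bookkeeping choices.
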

\begin{proof}
For each $x\in \S^\D$, we consider the (nonrandom) Gibbs measure
\begin{align}\label{e.<>_x}
    \la\cdot\ra_x\propto e^{x\cdot \sigma}\d P_1(\sigma).
\end{align}
We can inductively compute
\begin{align*}
    \partial^\bi_x\phi = \la G_\bi\Ll(\Ll(\sigma^l_k\Rr)_{1\leq k\leq D,\, 1\leq l\leq |\bi|}\Rr) \ra_x 
\end{align*}
where $G_\bi:\R^{D\times |\bi|}\to\R$ is a polynomial and $(\sigma^l)_{1\leq l\leq |\bi|}$ are independent copies of $\sigma$ under $\la\cdot\ra_x$. The assumption \eqref{e.supp_P_1} on $P_1$ ensures that $|\sigma^l|\leq 1$ for every $l$ and thus the lemma follows.
\end{proof}

To prove Lemma~\ref{l.deri_disc}, we need one more lemma.

\begin{lemma}\label{l.EeV=1}
If $\alpha\in\cM_\d$, then $\E e^{V_\alpha(s,x)}=1$ for every $(s,x) \in [0,1]\times \R^\D$.
\end{lemma}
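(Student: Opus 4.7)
The plan is to reduce the claim to a sequence of one-step conditional-expectation identities by using the defining recursion of $\Phi_\alpha$ and the independent-increment structure of the martingale $M$.

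First I would unpack the definition. Since $\alpha\in\cM_\d$ is of the form \eqref{e.alpha=}, the Lebesgue--Stieltjes integral collapses to a finite sum, and $\int_0^1 \Phi_\alpha(s\vee t,x+M(s\vee t)-M(s))\d\alpha(t) = \sum_{k=0}^K(m_k-m_{k-1})\Phi_\alpha(s\vee q_k,x+M(s\vee q_k)-M(s))$. Fix $l$ with $s\in[q_{l-1},q_l)$ and split the sum according to whether $q_k<s$ (in which case $s\vee q_k=s$ and $M(s\vee q_k)-M(s)=0$) or $q_k\ge s$. The terms with $k<l$ telescope to $m_{l-1}\Phi_\alpha(s,x)$. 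Abbreviating $Y_k = \Phi_\alpha(q_k,x+M(q_k)-M(s))$ for $k\ge l$ and $Y_K=\Phi_\alpha(1,x+M(1)-M(s))$ (so the first term of \eqref{e.V=} equals $Y_K$), Abel summation on $\sum_{k=l}^K(m_k-m_{k-1})Y_k$ together with $m_K=1$ yields
\begin{align*}
V_\alpha(s,x) \;=\; m_{l-1}\bigl(Y_l-\Phi_\alpha(s,x)\bigr) \;+\; \sum_{k=l}^{K-1} m_k\bigl(Y_{k+1}-Y_k\bigr).
\end{align*}

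Next I would exploit that the Brownian-style increments of $M$ match, in law, the Gaussian steps used in Definition~\ref{d.sol_pde_discrete}. Let $\cF_j = \sigma\bigl(M(q_i)-M(s):\,l\le i\le j\bigr)$ for $j\ge l-1$, with $\cF_{l-1}$ trivial. By \eqref{e.M(t)-M(s)} and the independent-increments property of $B$, the increment $M(q_{k+1})-M(q_k)$ is independent of $\cF_k$ and equals $\sqrt{\mu(q_{k+1})-\mu(q_k)}\,g$ in law for an independent standard Gaussian $g$. Applying~\eqref{e.Phi(s,x)} at step $k$ therefore gives, when $m_k>0$,
\begin{align*}
\E\bigl[e^{m_k Y_{k+1}}\,\big|\,\cF_k\bigr] \;=\; e^{m_k\Phi_\alpha(q_k,\,x+M(q_k)-M(s))} \;=\; e^{m_k Y_k},
\end{align*}
so $\E\bigl[e^{m_k(Y_{k+1}-Y_k)}\mid\cF_k\bigr]=1$. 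The $m_k=0$ case is handled by the convention stated below \eqref{e.Phi(s,x)}, where the same identity holds trivially since $e^{0\cdot(\cdot)}=1$. An analogous identity at the base step gives $\E\,e^{m_{l-1}(Y_l-\Phi_\alpha(s,x))}=1$ (noting that $\Phi_\alpha(s,x)$ is $\cF_{l-1}$-measurable, i.e.\ deterministic).

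Finally I would assemble these pieces by the tower property. Writing $W_j = e^{m_{l-1}(Y_l-\Phi_\alpha(s,x))}\prod_{k=l}^{j}e^{m_k(Y_{k+1}-Y_k)}$, each $W_j$ is $\cF_{j+1}$-measurable, and $\E[W_j\mid\cF_j]=W_{j-1}$ by the previous step, so iteratively $\E W_{K-1}=\E W_{l-1}=1$. Since $e^{V_\alpha(s,x)}=W_{K-1}$, this proves $\E e^{V_\alpha(s,x)}=1$.

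The only genuinely delicate point is getting the Abel rearrangement right, together with carefully matching the discrete Gaussian vectors $g_l$ used to define $\Phi_\alpha$ with the martingale increments of $M$ on the corresponding intervals; once this is set up, everything reduces to a routine martingale argument, and no quantitative estimates are required.
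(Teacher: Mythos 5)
Your proposal is correct and takes essentially the same approach as the paper: both proofs exploit the recursive definition~\eqref{e.Phi(s,x)} together with the independent increments of $M$, and iterate the resulting one-step conditional identity $\E[e^{m_k(Y_{k+1}-Y_k)}\mid\cF_k]=1$. The paper presents this as a backward induction on $l$ via a recursion for $V$ itself (its display~\eqref{e.V+..=V}), while you unroll $e^{V_\alpha(s,x)}$ into a product of conditionally-mean-one factors via Abel summation and apply the tower property — same argument, different bookkeeping.
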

\begin{proof}
Write $\Phi=\Phi_\alpha$ and $V=V_\alpha$.
Assuming $\alpha$ has the form in~\eqref{e.alpha=}, we have $\d \alpha =\sum_{l=0}^K (m_l-m_{l-1})\boldsymbol\delta_{q_l}$. Then, for $s \in [q_{l-1},q_l)$, the definition of $V$ in~\eqref{e.V=} yields
\begin{align*}
    &V(s,x)  = \Phi(1,x+M(1)-M(s)) - \sum_{k=0}^K (m_k-m_{k-1}) \Phi(s\vee q_k,x+M(s\vee q_k)-M(s))
    \\
    &=  \Phi(1,x+M(1)-M(s)) - \sum_{k=l}^K (m_k-m_{k-1}) \Phi( q_k,x+M( q_k)-M(s)) - m_{l-1} \Phi(s,x).
\end{align*}
Directly applying this, we have
\begin{align*}
    &V(q_l,x+M(q_l)-M(s))=  \Phi(1,x+M(1)-M(s)) 
    \\
    &- \sum_{k=l+1}^K (m_k-m_{k-1}) \Phi( q_k,x+M( q_k)-M(s)) - m_{l} \Phi(q_l,x+M(q_l)-M(s)).
\end{align*}
Comparing the above two displays, we get, for $s\in [q_{l-1},q_l)$, 
\begin{align}\label{e.V+..=V}
    V(s,x)=V(q_l,x+M(q_l)-M(s))+m_{l-1} \Ll(\Phi(q_l,x+M(q_l)-M(s))-\Phi(s,x)\Rr) .
\end{align}

To prove the lemma, we use induction. Due to $V(1,x)=0$, we have $\E e^{V(1,x)}=1$ for all $x\in\R^\D$. Now, we assume $\E e^{V(s,x)}=1$ for all $(s,x)\in [q_l,1]\times \R^\D$. Let $s\in [q_{l-1},q_l)$. Using~\eqref{e.V+..=V}, we have
\begin{align*}
    \E e^{V(s,x)} = \E \Ll[e^{m_{l-1} \Ll(\Phi(q_l,x+M(q_l)-M(s))-\Phi(s,x)\Rr)}\E\Ll[ e^{V(q_l,x+M(q_l)-M(s))}\,\big|\,M(q_l)-M(s)\Rr]\Rr].
\end{align*}
Since $V(q_l,x)$ only depends on $(M(q_k)-M(q_l))_{k>l}$ which are independent of $M(q_l)-M(s)$, the induction assumption implies that the conditional expectation is equal to $1$. Due to~\eqref{e.Phi(s,x)} and~\eqref{e.M(t)-M(s)}, we get
\begin{align*}
    \E \Ll[e^{m_{l-1} \Ll(\Phi(q_l,x+M(q_l)-M(s))-\Phi(s,x)\Rr)}\Rr]=1
\end{align*}
which completes the induction step and the proof of the lemma.    
\end{proof}

\begin{proof}[Proof of Lemma~\ref{l.deri_disc}]
Note that~\eqref{e.d^iPhi_in_C} follows from~\eqref{e.|d^iPhi|<C} and~\eqref{e.|d_sd_xPhi|}  from~\eqref{e.|d^iPhi|<C} and Lemma~\ref{l.PDE}. Hence, we only need to show~\eqref{e.d^iPhi=E} and~\eqref{e.|d^iPhi|<C}.

Write $\Phi=\Phi_\alpha$ and $V=V_\alpha$.
We first show~\eqref{e.d^iPhi=E} and start with $\bi$ satisfy $|\bi|=1$. Assuming that $\bi=(i)\in\{1,\dots,\D\}$, We want to verify, for all $(s,x) \in [0,1]\times \R^\D$,
\begin{align}\label{e.partialPhi_i=1}
    \partial_{x_i}\Phi(s,x) = \E \partial_{x_i} \phi(x+M(1)-M(s)) e^{V(s,x)}
\end{align}
To show this, we use backwards induction on the range of $s$. Due to $V(1,x)=0$, we have
\begin{align*}
    \partial_{x_i}\Phi(1,x) = \partial_{x_i}\phi(x) = \E \partial_{x_i}\phi(x+M(1)-M(1))e^{V(1,x)}.
\end{align*}
Now, we assume that~\eqref{e.partialPhi_i=1} holds for all $(s,x) \in [q_l,1]\times \R^\D$. By~\eqref{e.Phi(s,x)} (see also~\eqref{e.M(t)-M(s)}), \eqref{e.partialPhi_i=1} at $s=q_l$, and~\eqref{e.V+..=V}, we can compute
\begin{align*}
    \partial_{x_i} \Phi(s,x)  &= \frac{\E \partial_{x_i} \Phi(q_l,x+M(q_l)-M(s))e^{m_{l-1}\Phi(q_l,x+M(q_l)-M(s))}}{\E e^{m_{l-1}\Phi(q_l,x+M(q_l)-M(s))}}
    \\
    & = \E \partial_{x_i} \Phi(q_l,x+M(q_l)-M(s))e^{m_{l-1}(\Phi(q_l,x+M(q_l)-M(s))-\Phi(s,x))}
    \\
    & = \E \partial_{x_i}\phi(x+M(1)-M(s))e^{V(q_l,x+M(q_l)-M(s))+m_{l-1}(\Phi(q_l,x+M(q_l)-M(s))-\Phi(s,x))}
    \\
    & = \E \partial_{x_i}\phi(x+M(1)-M(s))e^{V(s,x)}.
\end{align*}
This completes the induction step and verifies~\eqref{e.partialPhi_i=1}. Therefore, we have shown~\eqref{e.d^iPhi=E} for $\bi$ satisfying $|\bi|=1$. For $|\bi|>1$,~\eqref{e.d^iPhi=E} follows from induction and basic calculus rules. 

Now, we turn to~\eqref{e.|d^iPhi|<C}. For $\bi$ satisfying $|\bi|=1$,~\eqref{e.|d^iPhi|<C} follows from~\eqref{e.partialPhi_i=1}, Lemma~\ref{l.|dphi|<infty}, and Lemma~\ref{l.EeV=1}. For $\bi$ satisfying $|\bi|>1$, we use induction and the expression of $V$ to bound $\nabla^{\leq |\bi|-1 }V(s,x)$ and use Lemma~\ref{l.|dphi|<infty} to bound $\nabla^{\leq |\bi|}\phi(x+M(1)-M(s))$. Then,~\eqref{e.|d^iPhi|<C} follows from these bounds,~\eqref{e.d^iPhi=E}, and Lemma~\ref{l.EeV=1}.
\end{proof}

The next lemma is a straightforward adaptation of \cite[Lemma~6]{auffinger2015properties}.

\begin{lemma}[\cite{auffinger2015properties}]\label{l.P(V)_cvg}
Suppose that a sequence $(\alpha_n)_{n\in\N}$ in $\cM_\d$ converges to $\alpha$ in $\cM$. For $k\geq 1$, let $P:\R^{\sum_{i=1}^kD^i}\to\R$ be a polynomial. If $\nabla^{\leq k}\Phi_\alpha$ exists and $ \nabla^{\leq k}\Phi_{\alpha_n}$ converges to $\nabla^{\leq k}\Phi_\alpha$ uniformly on $[0,1]\times \R^\D$, then
\begin{align*}
    \lim_{n\to\infty}\sup_{[0,1]\times\R^\D}\E \Ll|P\Ll(\nabla^{\leq k}V_{\alpha_n}\Rr)e^{V_{\alpha_n}}-P\Ll(\nabla^{\leq k}V_\alpha\Rr)e^{V_\alpha}\Rr|=0.
\end{align*}
\end{lemma}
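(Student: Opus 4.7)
The plan is to prove $L^1(\P)$ convergence of $V_{\alpha_n}$ and $\nabla^{\leq k}V_{\alpha_n}$ to their $\alpha$-counterparts, uniformly in $(s,x)$, and then combine this with uniform $L^\infty$ bounds on all quantities.

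\emph{Uniform bounds.} For each $\alpha_n\in\cM_\d$, Lemma~\ref{l.deri_disc} yields bounds $\|\partial^\bi_x\Phi_{\alpha_n}\|_\infty\leq C_{0,\bi}$ for $|\bi|\leq k$ and, on each interval of constancy of $\alpha_n$, $\|\partial_s\partial^\bi_x\Phi_{\alpha_n}\|_\infty\leq C_{1,\bi}$ with constants independent of $\alpha_n$. Continuity of each $\partial^\bi_x\Phi_{\alpha_n}$ in $s$ promotes this to a global $s$-Lipschitz bound. Uniform convergence $\Phi_{\alpha_n}\to\Phi_\alpha$ (Corollary~\ref{c.well-defined}) and $\nabla^{\leq k}\Phi_{\alpha_n}\to\nabla^{\leq k}\Phi_\alpha$ (hypothesis) transfer these $L^\infty$ and $s$-Lipschitz bounds to $\Phi_\alpha$ and $\nabla^{\leq k}\Phi_\alpha$. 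Definition~\eqref{e.V=} then yields $L^\infty$ bounds on $V_{\alpha_n},V_\alpha,\nabla^{\leq k}V_{\alpha_n},\nabla^{\leq k}V_\alpha$ uniform in $(n,s,x,\omega)$, and thereby on $|P(\nabla^{\leq k}V_{\alpha_n})e^{V_{\alpha_n}}|$.

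\emph{Uniform $L^1$ convergence of $V$ and its $x$-derivatives.} Decompose
\begin{align*}
V_{\alpha_n}-V_\alpha = (\Phi_{\alpha_n}-\Phi_\alpha)(1,\cdot) + \int (\Phi_\alpha-\Phi_{\alpha_n})(s\vee t,\cdot)\,d\alpha_n(t) + \int\Phi_\alpha(s\vee t,\cdot)\,d(\alpha-\alpha_n)(t).
\end{align*}
The first two summands are dominated by $\|\Phi_{\alpha_n}-\Phi_\alpha\|_\infty\to 0$. For the third, set $g_{s,x,\omega}(t)=\Phi_\alpha(s\vee t,x+M(s\vee t)-M(s))$; the previous bounds yield the $(s,x)$-independent continuity modulus $\omega_g(\delta;\omega)= C\delta + C\sup_{|u-u'|\leq\delta}|M(u)-M(u')|$. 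A piecewise-linear interpolant $\tilde g$ on an equispaced mesh-$\delta$ partition satisfies $\|g-\tilde g\|_\infty\leq\omega_g(\delta;\omega)$ and $\mathrm{Lip}(\tilde g)\leq\omega_g(\delta;\omega)/\delta$, so Kantorovich duality together with $W_1(d\alpha_n,d\alpha)=d_\cM(\alpha_n,\alpha)$ gives
\begin{align*}
\Bigl|\int\Phi_\alpha\,d(\alpha-\alpha_n)\Bigr|\leq 2\omega_g(\delta;\omega) + \frac{\omega_g(\delta;\omega)}{\delta}\,d_\cM(\alpha_n,\alpha).
\end{align*}
Taking expectation, using $\E\sup_{|u-u'|\leq\delta}|M(u)-M(u')|\to 0$ as $\delta\to 0$ by Brownian regularity, and sending $n\to\infty$ then $\delta\to 0$ yields $\sup_{s,x}\E|V_{\alpha_n}-V_\alpha|\to 0$. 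The same argument applied to each $\partial^\bi_x V_{\alpha_n}-\partial^\bi_x V_\alpha$ for $|\bi|\leq k$, using the uniform bounds on $\partial_s\partial^\bi_x\Phi_\alpha$ and $\nabla\partial^\bi_x\Phi_\alpha$, gives $\sup_{s,x}\E|\nabla^{\leq k}V_{\alpha_n}-\nabla^{\leq k}V_\alpha|\to 0$.

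\emph{Conclusion and obstacle.} The identity $P(u_n)e^{v_n}-P(u)e^v=(P(u_n)-P(u))e^{v_n}+P(u)(e^{v_n}-e^v)$ with $(u_n,v_n)=(\nabla^{\leq k}V_{\alpha_n},V_{\alpha_n})$, combined with Lipschitzness of $P$ and $\exp$ on the uniformly bounded range, yields $\sup_{s,x}\E|P(u_n)e^{v_n}-P(u)e^v|\leq C\sup_{s,x}\E(|u_n-u|+|v_n-v|)\to 0$. The main obstacle is securing the uniformity in $(s,x)$ of $\int\Phi_\alpha\,d\alpha_n\to\int\Phi_\alpha\,d\alpha$: pointwise (in $\omega$) convergence is immediate from weak convergence $d\alpha_n\to d\alpha$ plus continuity of $\Phi_\alpha$, but the uniform statement requires the $(s,x)$-independent continuity modulus of $g_{s,x,\omega}$, which is available only because the uniform Lipschitz bounds of Lemma~\ref{l.deri_disc} survive the limit $n\to\infty$.
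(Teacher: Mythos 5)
The structure of your argument is sound and the key idea — reducing the problematic term $\int\Phi_\alpha\,\mathrm{d}(\alpha-\alpha_n)$ to a Kantorovich duality estimate with an $\omega$-dependent modulus of continuity controlled by the Brownian modulus — is a legitimate route (the paper itself only defers to \cite[Lemma~6]{auffinger2015properties} without reproducing a proof). However, there is a genuine gap in the ``Uniform bounds'' step. You claim that~\eqref{e.V=} yields $L^\infty$ bounds on $V_{\alpha_n},V_\alpha$ uniform in $(n,s,x,\omega)$, but this fails for $V$ itself: Lemma~\ref{l.deri_disc} gives the bound $\sup|\partial^\bi_x\Phi_\alpha|\leq C_{0,\bi}$ only for $|\bi|\geq 1$ (check the qualifier ``if $|\bi|\geq 1$'' in the statement), and $\Phi_\alpha(s,x)$ itself grows linearly in $|x|$ (for instance $\phi(x)=\log\int e^{x\cdot\sigma}\,\mathrm{d}P_1$ behaves like $\sup_{\sigma\in\supp P_1}x\cdot\sigma$ for large $x$). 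What the $s$- and $x$-Lipschitz bounds actually give, after using $\int\mathrm{d}\alpha=1$ to write $V_\alpha$ as an $\alpha$-average of differences, is $|V_\alpha(s,x)|\leq C(1+\|M\|_\infty)$; this is uniform in $(n,s,x)$ but \emph{not} in $\omega$. Consequently $e^{V_{\alpha_n}}$ is not uniformly bounded, and the final step ``Lipschitzness of $\exp$ on the uniformly bounded range'' does not apply as written.

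The gap is fixable but requires replacing pointwise control by moment control. Since $\|M\|_\infty$ is sub-Gaussian, $e^{|V_\alpha|}\leq e^{C(1+\|M\|_\infty)}$ has finite moments of all orders, uniformly in $(n,s,x)$; combine this with H\"older's inequality to pass from your $L^1(\P)$-convergence of $V_{\alpha_n}-V_\alpha$ (upgraded to $L^2(\P)$ by interpolating with the a.s.\ bound $|V_{\alpha_n}-V_\alpha|\leq C(1+\|M\|_\infty)$) and the genuine $L^\infty$ bound on $\nabla^{\leq k}V_{\alpha_n}$ (which \emph{is} deterministic, since $\nabla^{\leq k}$ starts at $|\bi|=1$). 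Concretely, split $P(u_n)e^{v_n}-P(u)e^v=(P(u_n)-P(u))e^{v_n}+P(u)e^v(e^{v_n-v}-1)$, bound the first summand by Cauchy--Schwarz using $\E e^{2v_n}<\infty$, and the second using $|e^{v_n-v}-1|\leq|v_n-v|e^{|v_n-v|}$ together with H\"older and the exponential moment bound. With this repair your argument goes through.
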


Heuristically, this is evident from the definition of $V_\alpha$ in~\eqref{e.V=} that $\nabla^{\leq k}V_\alpha$ is an integral of $\nabla^{\leq k}\Phi_\alpha$. We refer to \cite{auffinger2015properties} for the detailed proof.

\begin{proposition}\label{p.reg_Phi}
The following holds:
\begin{enumerate}
    \item \label{i.d_xPhi_exists} For every $\alpha\in\cM$ and every $\bi\in\mathscr{I}$, the real-valued function $\partial^\bi_x\Phi_\alpha$ exists and is continuous on $[0,1]\times \R^\D$.
    \item \label{i.dxPhi_cvg} If $(\alpha_n)_{n\in\N}$ converges to some $\alpha $ in $\cM$, then, for every $\bi\in\mathscr{I}$, $\partial^\bi_x\Phi_{\alpha_n}$ converges to $\partial^\bi_x\Phi_\alpha$ uniformly on $[0,1]\times \R^\D$.
    \item \label{i.sup|d_xPhi|<infty} For every $\bi\in\mathscr{I}$ with $|\bi|\geq 1$, 
    \begin{align*}
        \sup_{\alpha\in\cM}\sup_{[0,1]\times \R^\D}\Ll|\partial^\bi_x\Phi_\alpha\Rr|<\infty. \end{align*}
    \item \label{i.dsdxPhi_exists} If $\alpha \in \cM$ is continuous, then for every $\bi\in\mathscr{I}$, the real-valued function $\partial_s\partial^\bi_x\Phi_\alpha$ exists and is continuous on $[0,1]\times \R^\D$.
    In particular, for every $(s,x)\in[0,1]\times\R^\D$,
    \begin{align}\label{e.PDE_cts}
    \partial_s \Phi_\alpha(s,x)  +\frac{1}{2}\la \gamma(s)\,,\,\nabla^2\Phi_\alpha(s,x)+\alpha(s)(\nabla \Phi_\alpha(s,x))(\nabla \Phi_\alpha(s,x))^\intercal\ra_{\S^D}=0.
\end{align}
\end{enumerate}
\end{proposition}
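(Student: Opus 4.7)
The plan is to fix $\alpha\in\cM$, choose an approximating sequence $(\alpha_n)_{n\in\N}$ in $\cM_\d$ with $\alpha_n\to\alpha$ in $\cM$, and transfer the properties of $\Phi_{\alpha_n}$ established in Lemma~\ref{l.deri_disc} to $\Phi_\alpha$ by a single induction on $k=|\bi|$ that delivers parts~\eqref{i.d_xPhi_exists}--\eqref{i.sup|d_xPhi|<infty} simultaneously. The base case $k=0$ is given by Corollary~\ref{c.well-defined} for continuity and uniform convergence, and by the uniform bound $|\Phi_{\alpha_n}|\leq C_{0,\emptyset}$ from Lemma~\ref{l.deri_disc} for~\eqref{i.sup|d_xPhi|<infty}.

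For the inductive step, assume that $\partial^{\bi'}_x\Phi_{\alpha_n}$ converges uniformly on $[0,1]\times\R^\D$ to a continuous, uniformly bounded $\partial^{\bi'}_x\Phi_\alpha$ for every $|\bi'|\leq k-1$. Fix $\bi$ with $|\bi|=k$ and pick any splitting $\bi=\bi'*(j)$ with $|\bi'|=k-1$. By~\eqref{e.d^iPhi=E},
\[
\partial^\bi_x \Phi_{\alpha_n}(s,x) = \E\Bigl[F_\bi\bigl(\nabla^{\leq k-1}V_{\alpha_n}(s,x),\,\nabla^{\leq k}\phi(x+M(1)-M(s))\bigr)e^{V_{\alpha_n}(s,x)}\Bigr].
\]
Since $\nabla^{\leq k-1}V_\alpha$ is an explicit combination of $\nabla^{\leq k-1}\Phi_\alpha$ evaluated at shifted points (by the definition \eqref{e.V=}), the induction hypothesis lets me apply Lemma~\ref{l.P(V)_cvg} at level $k-1$, which yields uniform convergence of the right-hand side to the analogous expression for $\alpha$. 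That limiting function is continuous in $(s,x)$ by dominated convergence, using Lemma~\ref{l.|dphi|<infty} to dominate $\nabla^{\leq k}\phi$. Now $\partial^{\bi'}_x \Phi_{\alpha_n}$ converges uniformly to $\partial^{\bi'}_x\Phi_\alpha$, while $\partial_{x_j}\partial^{\bi'}_x\Phi_{\alpha_n}=\partial^\bi_x\Phi_{\alpha_n}$ also converges uniformly, so a standard real-analysis argument identifies the limit as $\partial^\bi_x\Phi_\alpha$. This proves~\eqref{i.d_xPhi_exists} and~\eqref{i.dxPhi_cvg}; the bound~\eqref{i.sup|d_xPhi|<infty} then follows from $|\partial^\bi_x\Phi_{\alpha_n}|\leq C_{0,\bi}$ in Lemma~\ref{l.deri_disc}.

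For part~\eqref{i.dsdxPhi_exists}, I integrate the PDE of Lemma~\ref{l.PDE} in the $s$-variable. Since $\Phi_{\alpha_n}$ is continuous in $s$ and its $s$-derivative is piecewise continuous with uniform bound $C_{1,\bi}$, the fundamental theorem of calculus combined with differentiation in $x$ gives
\[
\partial^\bi_x\Phi_{\alpha_n}(t,x)-\partial^\bi_x\Phi_{\alpha_n}(s,x) = -\tfrac{1}{2}\int_s^t \partial^\bi_x\bigl\langle \gamma(r),\,\nabla^2 \Phi_{\alpha_n}+\alpha_n(r)(\nabla\Phi_{\alpha_n})(\nabla\Phi_{\alpha_n})^\intercal \bigr\rangle_{\S^\D}\d r.
\]
Using parts~\eqref{i.d_xPhi_exists}--\eqref{i.sup|d_xPhi|<infty} together with dominated convergence, I pass to the limit to obtain the corresponding identity for $\Phi_\alpha$ with $\alpha$ in place of $\alpha_n$. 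When $\alpha$ is continuous, the integrand is continuous in $r$ at every $r$ where $\gamma(r)$ is defined, so the Lebesgue differentiation theorem yields that $\partial_s\partial^\bi_x\Phi_\alpha$ exists, equals that integrand, and satisfies~\eqref{e.PDE_cts}; continuity of $\partial_s\partial^\bi_x\Phi_\alpha$ in $(s,x)$ follows from continuity of the integrand in $(r,x)$.

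The main obstacle is closing the induction cleanly: Lemma~\ref{l.P(V)_cvg} is designed so that it consumes uniform convergence of $\nabla^{\leq k-1}\Phi_{\alpha_n}$ and produces uniform convergence of $\E[\cdot\,e^{V_{\alpha_n}}]$ against polynomials in $\nabla^{\leq k-1}V_{\alpha_n}$, which is exactly what the representation~\eqref{e.d^iPhi=E} requires at level $k$. The subtler point for part~\eqref{i.dsdxPhi_exists} is that $\gamma$ is only defined almost everywhere, so the discontinuities of $\alpha_n$ survive in the discrete PDE but disappear in the limit precisely because $\alpha$ is continuous; this is the reason the continuity hypothesis on $\alpha$ cannot be dropped in~\eqref{i.dsdxPhi_exists}, even though it was not needed in~\eqref{i.d_xPhi_exists}--\eqref{i.sup|d_xPhi|<infty}.
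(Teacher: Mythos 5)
Your proposal follows the paper's proof essentially verbatim: induction on $|\bi|$ using the representation~\eqref{e.d^iPhi=E} and Lemma~\ref{l.P(V)_cvg}, then passage to the limit via the dense set $\cM_\d$, and for part~\eqref{i.dsdxPhi_exists} integrating the PDE of Lemma~\ref{l.PDE} and sending $n\to\infty$. Two small inaccuracies are worth flagging, though neither breaks the argument. First, in the base case you invoke a uniform bound $|\Phi_{\alpha_n}|\leq C_{0,\emptyset}$, but Lemma~\ref{l.deri_disc} only asserts~\eqref{e.|d^iPhi|<C} for $|\bi|\geq 1$, and $\Phi_{\alpha_n}$ itself is unbounded (compare $\log\cosh$); this is harmless because part~\eqref{i.sup|d_xPhi|<infty} makes no claim at $|\bi|=0$. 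Second, Lemma~\ref{l.P(V)_cvg} controls only expectations of the form $\E[P(\nabla^{\leq k}V_{\alpha_n})e^{V_{\alpha_n}}]$, whereas~\eqref{e.d^iPhi=E} involves the mixed integrand $F_\bi(\nabla^{\leq|\bi|-1}V_{\alpha_n},\,\nabla^{\leq|\bi|}\phi)\,e^{V_{\alpha_n}}$; the paper handles this by writing $F_\bi=\sum_l P_l\otimes Q_l$ and bounding $\|Q_l(\nabla^{\leq|\bi|}\phi)\|_\infty$ via Lemma~\ref{l.|dphi|<infty}, a factoring your write-up leaves implicit. With those two points made explicit, your induction closes exactly as in the paper, and your treatment of part~\eqref{i.dsdxPhi_exists} (integrate, pass to the limit, differentiate using continuity of $\alpha$) is the paper's argument.
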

\begin{proof}
Part~\eqref{i.d_xPhi_exists}. Let $(\alpha_n)_{n\in\N}$ be a sequence in $\cM_\d$ that converges to $\alpha$. If $|\bi|=0$, then the uniform convergence of $\Phi_{\alpha_n}$ to $\Phi_\alpha$ given by Lemma~\ref{l.Lipschitz_phi} implies that $\Phi_\alpha$ is continuous. Now, we proceed by induction and assume that part~\eqref{i.d_xPhi_exists} holds for all $\bi'$ with $|\bi'|\leq k-1$ for some $k\in\N$. Let $\bi$ satisfy $|\bi|=k$. Let $F_\bi$ be given in Lemma~\ref{l.deri_disc}. We rewrite it as 
\begin{align*}
    F_\bi(\mathbf{x},\mathbf{y}) = \sum_{l=1}^pP_l(\mathbf{x})Q_l(\mathbf{y}), \quad\forall (\mathbf{x},\mathbf{y})\in \R^{\sum_{k=1}^{|\bi|-1}D^k}\times\R^{\sum_{k=1}^{|\bi|}D^k}
\end{align*}
for some $p\in\N$,
where $P_l$ and $Q_l$ are polynomials. The induction assumption together with Lemma~\ref{l.|dphi|<infty} and Lemma~\ref{l.P(V)_cvg} implies that, for every $l$,
\begin{align*}
    \lim_{n\to\infty} \sup_{(s,x)\in[0,1]\times \R^\D}\E\Big[\Ll|P_l\Ll(\nabla^{\leq |\bi|-1}V_{\alpha_n}(s,x)\Rr)e^{V_{\alpha_n}(s,x)}-P_l\Ll(\nabla^{\leq |\bi|-1}V_\alpha(s,x)\Rr)e^{V_\alpha(s,x)}\Rr|
    \\
    \times\Ll|Q_l\Ll(\nabla^{\leq |\bi|}\phi(x+M(1)-M(s))\Rr)\Rr|\Big]=0.
\end{align*}
This along with~\eqref{e.d^iPhi=E} implies that $\partial^\bi_x\Phi_{\alpha_n}$ converges uniformly on $[0,1]\times \R^\D$. 
Also, $\partial^\bi_x\Phi_{\alpha_n}$ is continuous by~\eqref{e.d^iPhi_in_C} in Lemma~\ref{l.deri_disc}.
So, $\partial^\bi_x\Phi_\alpha$ exists and is continuous. 
This completes the induction step and the proof of part~\eqref{i.d_xPhi_exists}.

Part~\eqref{i.dxPhi_cvg}. In the proof of part~\eqref{i.d_xPhi_exists}, we have shown the convergence for discrete $(\alpha_n)_{n\in\N}$. Since $\cM_\d$ is dense in $\cM$, we can deduce this part by approximation.

Part~\eqref{i.sup|d_xPhi|<infty} follows from part~\eqref{i.dxPhi_cvg} and~\eqref{e.|d^iPhi|<C}.

Part~\eqref{i.dsdxPhi_exists}. We approximate $\alpha$ by $(\alpha_n)_{n\in\N}$ in $\cM_\d$. Integrating the equation for each $\alpha_n$ in Lemma~\ref{l.PDE}  and using part~\eqref{i.dxPhi_cvg}, we get, for every $(s,x)\in[0,1]\times \R^\D$,
\begin{align*}
    \phi(x)-\Phi_\alpha(s,x)=-\int_s^1\frac{1}{2}\la \gamma(t)\,,\,\nabla^2\Phi_\alpha(t,x)+\alpha(t)(\nabla \Phi_\alpha(t,x))(\nabla \Phi_\alpha(t,x))^\intercal\ra_{\S^D}\d t.
\end{align*}
By part~\eqref{i.d_xPhi_exists}, we conclude that $\partial_s\Phi_\alpha$ exists and is continuous and also~\eqref{e.PDE_cts} holds. This proves part~\eqref{i.dsdxPhi_exists} for $\bi$ satisfying $|\bi|=0$. The general case follows from differentiating the equation for $\alpha_n$ in Lemma~\ref{l.PDE} further (allowed by~\eqref{e.d^iPhi_in_C}) and then apply the same argument.
\end{proof}

\section{Variational representation and convexity}\label{s.convex}

We refer to Section~\ref{s.org} for the outline of this section.
Let $(B(t))_{t\geq 0}$ be the standard Wiener process in $\R^\D$. For $0\leq s< t$, Let $\mathcal{D}[s,t]$ be the space of $\R^\D$-valued processes $(u(r))_{r\in[s,t]}$ measurable with respect to the sigma-algebra generated by $(B(r))_{r\in[s,t]}$. We equip $\mathcal{D}[s,t]$ with the following norm
\begin{align}\label{e.norm}
    \|u\| = \Ll(\E \int_s^t |u(r)|^2\d r\Rr)^\frac{1}{2}.
\end{align}
Throughout this section, we fix $P_1$ and $\Psi$ and we write, for $\alpha\in\cM$,
\begin{align}\label{e.Psi_alpha=Psi_PPsialpha}
    \Phi_\alpha = \Phi_{P_1,\Psi,\alpha}
\end{align}
given in Definition~\ref{d.sol_pde}.

\begin{remark}
As commented in Remark~\ref{r.condition_PDE}, in Section~\ref{s.PDE}, we only assume that $P_1$ satisfies \eqref{e.supp_P_1} and $\Psi\in\Pi_\mathrm{Lip}(z)$ satisfies \eqref{e.weak_gamma>0}.
We continue to assume these two conditions in this section. Whenever \eqref{e.not_Dirac} for $P_1$ and \eqref{e.gamma>0} for $\Psi$ are needed, we mention them explicitly in the statements.
\end{remark}

We are careful with the condition~\eqref{e.gamma>0} because this is not satisfied in the Potts model with quadratic interaction.

\subsection{Variational formula for the solution}
For $\alpha\in\cM$, $0\leq s<t\leq 1$, $x\in\R^\D$, and $u\in \mathcal{D}[s,t]$, define
\begin{align}\label{e.F=}
    F^{s,t}_\alpha(u,x) = \E \left[C^{s,t}_\alpha(u,x)-L^{s,t}_\alpha(u)\right]
\end{align}
where
\begin{gather}
    C^{s,t}_\alpha(u,x) = \Phi_\alpha\left(t,\,x+\int_s^t\alpha(r)\gamma(r)u(r)\d r + \int_s^t\sqrt{\gamma(r)}\d B(r)\right), \label{e.C=}
    \\
    L^{s,t}_\alpha(u)=\frac{1}{2}\int_s^t\alpha(r)\la\gamma(r),u(r)u(r)^\intercal\ra_{\S^D}\d r.\label{e.L=}
\end{gather}
In $\alpha(r)\gamma(r)u(r)$, $\alpha(r)$ is real-valued serving as a scalar, $\gamma(r)$ is a $\D\times\D$ matrix and $u(r)$ is $\R^\D$-valued. The term $\sqrt{\gamma(r)}\d B(r)$ is interpreted as in~\eqref{e.M(s)=}.
\begin{proposition}\label{p.var}
For $\alpha\in\cM$, $0\leq s<t\leq 1$, $x\in\R^\D$, and $u\in\mathcal{D}[s,t]$, 
\begin{align}\label{e.Phi=maxF}
    \Phi_\alpha(s,x)=\max_{u\in\mathcal{D}[s,t]} F^{s,t}_\alpha(u,x),
\end{align}
where the maximum is achieved at $u^\star_\alpha$ given by
\begin{align}\label{e.def_u}
    u^\star_\alpha(r) = \nabla\Phi_\alpha(r, X_\alpha(r)),\quad\forall r \in [s,t]
\end{align}
where $(X_\alpha(r))_{r\in[s,t]}$ is the unique strong solution of the SDE
\begin{align}\label{e.SDE}
    \begin{split}
        \d X_\alpha(r)  &= \alpha(r)\gamma(r)\nabla\Phi_\alpha(r,X_\alpha(r))\d r + \sqrt{\gamma(r)}\d B(r),
    \\
    X_\alpha(s)  &= x.
    \end{split}
\end{align}
\end{proposition}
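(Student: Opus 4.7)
The plan is to derive the variational representation via an Itô's formula computation combined with the Parisi PDE, following the strategy of Auffinger--Chen. Given $u\in\mathcal{D}[s,t]$, consider the controlled process
\begin{align*}
X^u(r) = x + \int_s^r \alpha(\tau)\gamma(\tau)u(\tau)\,d\tau + \int_s^r \sqrt{\gamma(\tau)}\,dB(\tau),\quad r\in[s,t],
\end{align*}
so that $C^{s,t}_\alpha(u,x) = \Phi_\alpha(t,X^u(t))$. Applying Itô to $r\mapsto \Phi_\alpha(r,X^u(r))$ produces a drift $\partial_s\Phi_\alpha + \alpha(\nabla\Phi_\alpha)^\intercal \gamma u + \tfrac12\langle\gamma,\nabla^2\Phi_\alpha\rangle_{\S^\D}$, which by the PDE~\eqref{e.PDE_cts} simplifies to $\alpha(\nabla\Phi_\alpha)^\intercal\gamma u - \tfrac12 \alpha (\nabla\Phi_\alpha)^\intercal\gamma\nabla\Phi_\alpha$. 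The martingale part $\int \nabla\Phi_\alpha^\intercal\sqrt{\gamma}\,dB$ has zero expectation since $\nabla\Phi_\alpha$ is uniformly bounded by Proposition~\ref{p.reg_Phi}\eqref{i.sup|d_xPhi|<infty} and $\gamma$ is bounded. Taking expectations and subtracting $L^{s,t}_\alpha(u)$ yields the key identity
\begin{align*}
F^{s,t}_\alpha(u,x) - \Phi_\alpha(s,x) = -\tfrac12 \E\int_s^t \alpha(r)\bigl(u(r)-\nabla\Phi_\alpha(r,X^u(r))\bigr)^\intercal\gamma(r)\bigl(u(r)-\nabla\Phi_\alpha(r,X^u(r))\bigr)\,dr.
\end{align*}
Since $\gamma(r)\in\S^\D_+$ (from $\Psi$ increasing together with assumption~\ref{i.xi_incre}), the right-hand side is $\leq 0$, giving the inequality $F^{s,t}_\alpha(u,x)\leq \Phi_\alpha(s,x)$.

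Equality in the identity forces $u(r)=\nabla\Phi_\alpha(r,X^u(r))$ almost everywhere, which is precisely the fixed-point condition giving the SDE~\eqref{e.SDE}. The drift $r\mapsto \alpha(r)\gamma(r)\nabla\Phi_\alpha(r,\cdot)$ is bounded and Lipschitz in $x$ uniformly in $r$ (Proposition~\ref{p.reg_Phi}\eqref{i.sup|d_xPhi|<infty} applied to $|\bi|=2$), while the diffusion $\sqrt{\gamma(r)}$ depends only on time; hence classical SDE theory produces a unique strong solution $X_\alpha$. Inserting $u^\star_\alpha(r)=\nabla\Phi_\alpha(r,X_\alpha(r))$ back into the identity makes the right-hand side vanish, so $F^{s,t}_\alpha(u^\star_\alpha,x)=\Phi_\alpha(s,x)$ and the supremum in~\eqref{e.Phi=maxF} is attained.

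The main obstacle is that the PDE~\eqref{e.PDE_cts} holds classically only when $\alpha$ is continuous (Proposition~\ref{p.reg_Phi}\eqref{i.dsdxPhi_exists}); for $\alpha\in\cM_\d$, $\partial_s\Phi_\alpha$ can jump at the grid points $q_l$. I would handle this by applying Itô piecewise on each sub-interval $[q_{l-1},q_l)$ where Lemma~\ref{l.PDE} supplies the classical PDE and splicing across the $q_l$ using the continuity of $\Phi_\alpha$ in $(s,x)$, so the boundary terms telescope. For general $\alpha\in\cM$, I would take $\alpha_n\in\cM_\d$ with $\alpha_n\to\alpha$ in $\cM$, apply the preceding reasoning to each $\alpha_n$, and pass to the limit in the identity using the uniform convergence of $\Phi_{\alpha_n}$ and $\nabla\Phi_{\alpha_n}$ supplied by Proposition~\ref{p.reg_Phi}\eqref{i.dxPhi_cvg} together with the uniform bounds from part~\eqref{i.sup|d_xPhi|<infty}. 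For attainment at the limiting $\alpha$, the same uniform Lipschitz-in-$x$ bound on $\nabla\Phi_{\alpha_n}$ gives stability of the solutions $X_{\alpha_n}\to X_\alpha$ to the SDE~\eqref{e.SDE}, from which the identity $F^{s,t}_\alpha(u^\star_\alpha,x)=\Phi_\alpha(s,x)$ follows in the limit.
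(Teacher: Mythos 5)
Your proof is correct, but for the inequality direction $\Phi_\alpha(s,x)\geq F^{s,t}_\alpha(u,x)$ it takes a genuinely different route from the paper. You derive everything from a single It\^o computation that yields the verification identity
\begin{align*}
\Phi_\alpha(s,x) - F^{s,t}_\alpha(u,x) = \frac{1}{2}\E\int_s^t \alpha(r)\Ll(u(r)-\nabla\Phi_\alpha(r,X^u(r))\Rr)^\intercal\gamma(r)\Ll(u(r)-\nabla\Phi_\alpha(r,X^u(r))\Rr)\d r,
\end{align*}
which is simultaneously nonnegative (since $\gamma\in\S^\D_+$) and vanishes at $u=u^\star_\alpha$. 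The paper instead splits the argument: the inequality $\Phi_\alpha(s,x)\geq F^{s,t}_\alpha(u,x)$ is proved (Steps 1--2) by a Girsanov change of measure plus Jensen's inequality applied directly to the recursive Cole--Hopf formula~\eqref{e.Phi(s,x)}, which needs neither the PDE~\eqref{e.PDE_cts} nor any $C^{1,2}$ regularity of $\Phi_\alpha$; only the attainment at $u^\star_\alpha$ (Steps 3--4) uses It\^o with the PDE. Your unified verification argument is more informative --- it exhibits the optimality gap as an explicit quadratic form --- but it places the full regularity burden (Lemma~\ref{l.deri_disc}, Proposition~\ref{p.reg_Phi}) on both halves of the proof, whereas the paper's Girsanov step is regularity-light for half of it. Since the paper establishes the needed regularity anyway, this costs nothing here, and both arguments are standard; the paper's choice keeps Step 1 self-contained from the recursive definition.

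One minor point: your remark that ``equality in the identity forces $u(r)=\nabla\Phi_\alpha(r,X^u(r))$ almost everywhere'' is only valid where $\alpha(r)\gamma(r)$ is nondegenerate, which is not guaranteed in general (indeed, in the Potts case with quadratic interaction $\gamma$ has a kernel). This doesn't affect your proof of the proposition --- attainment only requires plugging in $u^\star_\alpha$ and observing the right-hand side vanishes --- but it would matter if one tried to deduce uniqueness of the maximizer from the identity alone; compare Lemma~\ref{l.unique_maximizer}, which imposes~\eqref{e.gamma>0} precisely for this reason.
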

Notice that $X_\alpha$ is $\R^\D$-valued.
We refer to \cite[Definition~2.1 in Chapter~5]{karatzas1991brownian} for the notion of strong solutions. The existence of the solution to~\eqref{e.SDE} is ensured by \cite[Theorem~2.9 in Chapter~5]{karatzas1991brownian} together with the Lipschitzness of $\nabla\Phi_\alpha(r,\cdot)$ given by Proposition~\ref{p.reg_Phi}~\eqref{i.sup|d_xPhi|<infty}.

\begin{proof}
\textit{Step~1}. We verify the lower bound for $\Phi_\alpha(s,x)$ assuming $\alpha\in\cM_\d$. Let $\alpha$ be given in~\eqref{e.alpha=} with parameters $(m_l)_{l=-1}^K$ and $(q_l)_{l=0}^K$ in~\eqref{e.m,q}. For simplicity, we further assume that $s,t$ are discontinuity points of $\alpha$.

For $l\in \{0,1,\dots,K\}$, using~\eqref{e.M(t)-M(s)}, we can rewrite~\eqref{e.Phi(s,x)} into 
\begin{align*}
    \Phi_\alpha(q_{l-1},x) = \frac{1}{m_{l-1}} \log \E \exp\left(\m_{l-1} \Phi_\alpha\left(q_l, x+ \int_{q_{l-1}}^{q_l}\sqrt{\gamma(r)}dB(r)\right)\right).
\end{align*}
Define
\begin{align*}
    Z_l &= -\frac{1}{2}\int_{q_{l-1}}^{q_l}\m^2_l\la\gamma(r),u(r)u(r)^\intercal\ra_{\S^D}\d r-\int_{q_{l-1}}^{q_l}\m_{l-1}\la u(r), \sqrt{\gamma(r)}\d B(r)\ra_{\R^\D},
    \\
    \tilde B(r) &= \int_{q_{l-1}}^r\m_{l-1}\sqrt{\gamma(a)}u(a)\d a + B(r),
\end{align*}
and $\tilde\E[\cdots] = \E\Ll[\cdots e^{Z_l}\Rr]$. By the Girsanov theorem \cite[Theorem~5.1 in Chapter~3]{karatzas1991brownian}, $(\tilde B(r))_{r\in[q_{l-1},q_l]}$ is the standard Wiener process in $\R^\D$ under $\tilde \E$. Then, we have
\begin{align*}
    &\Phi_\alpha(q_{l-1},x) = \frac{1}{m_{l-1}} \log \tilde \E \exp\left(\m_{l-1} \Phi_\alpha\left(q_l, x+ \int_{q_{l-1}}^{q_l}\sqrt{\gamma(r)}d\tilde B(r)\right)\right)
    \\
    &=\frac{1}{m_{l-1}} \log  \E  \exp\left(\m_{l-1} \Phi_\alpha\left(q_l, x+ \int_{q_{l-1}}^{q_l}\m_{l-1}\gamma(r)u(r)\d r  + \int_{q_{l-1}}^{q_l}\sqrt{\gamma(r)}d B(r)\right)+Z_l\right).
\end{align*}
Jensen's inequality implies $\log \E \exp(\cdots)\geq \E(\cdots)$. Also note that, for $r\in [q_{l-1},q_l)$, we have $\m_{l-1} = \alpha(r)$. By this, we have
\begin{align*}
    \Phi(q_{l-1},x) \geq F^{q_{l-1},q_l}(u,x).
\end{align*}
Recall that we have assumed $s,t\in \{q_0,\dots,q_K\}$.
Iterating this procedure, we obtain the lower bound
\begin{align}\label{e.Phi_lower}
    \Phi_\alpha(s,x)\geq\sup_{u\in\mathcal{D}[s,t]}\left\{F^{s,t}_\alpha(u,x)\right\}.
\end{align}
The case where one of $s,t$ is a continuity point of $\alpha$ can be proved in the same way.

\textit{Step~2}. We upgrade the lower bound to general $\alpha\in\cM$. Fix any $u\in\mathcal{D}[s,t]$.
Let $(\alpha_n)_{n\in\N}$ be a sequence in $\cM_\d$ that converges to $\alpha$ almost everywhere on $[0,1]$. Such sequence can be extracted from a sequence converging to $\alpha$ in $\cM$.
Hence, $\int_s^t\alpha_n\gamma u\d r$ and $\int_s^t \alpha_n \la \gamma,uu^\intercal\ra_{\S^\D} \d r$ converge respectively to $\int_s^t\alpha\gamma u\d r$ and $\int_s^t \alpha \la \gamma,uu^\intercal\ra_{\S^\D} \d r$ almost surely. 
This along with Proposition~\ref{p.reg_Phi}~\eqref{i.sup|d_xPhi|<infty} and Lemma~\ref{l.Lipschitz_phi} implies that $F^{s,t}_{\alpha_n}(u,x)$ converges to $F^{s,t}_\alpha(u,x)$. Then,~\eqref{e.Phi_lower} follows from this and Lemma~\ref{l.Lipschitz_phi}.

\textit{Step~3}. We show that $u^\star_\alpha$ is a maximizer if $\alpha$ is continuous. In the following until~\eqref{e.step_3}, we write $\Phi=\Phi_\alpha$, $u^\star=u^\star_\alpha$, and $X=X_\alpha$. 
We introduce the following real-valued process
\begin{align*}
    Y(r) = \Phi(r,X(r)) - \frac{1}{2}\int_s^r \alpha(v)\la \gamma(v),u^\star(v)u^\star(v)^\intercal\ra_{\S^D}\d v 
    - \int_s^r \la u^\star(v),\sqrt{\gamma(v)}\d B(v)\ra_{\R^\D}.
\end{align*}
For brevity, we write $\Phi = \Phi(\cdot,X(\cdot))$ in the next display. Allowed by the regularity of $\Phi$ in Proposition~\ref{p.reg_Phi}, we can use It\^o's formula to compute
\begin{align*}
    \d \Phi & \stackrel{\eqref{e.SDE}}{=} (\partial_s\Phi)\d r + \la \nabla\Phi, \d X\ra_{\R^\D} + \frac{1}{2}\la \nabla^2\Phi,\gamma\ra_{\S^D}\d r
    \\
    & \stackrel{\eqref{e.SDE}}{=} \left(\partial_s \Phi + \alpha \la \gamma, (\nabla\Phi)(\nabla\Phi)^\intercal\ra_{\S^D}+\frac{1}{2}\la \gamma,\nabla^2\Phi\ra_{\S^D}\right)\d r + \la \nabla\Phi,\sqrt{\gamma}\d B\ra_{\R^\D}
    \\
    & \stackrel{\eqref{e.PDE_cts}}{=} \frac{1}{2}\alpha\la \gamma,(\nabla\Phi)(\nabla\Phi)^\intercal\ra_{\S^D}\d r+ \la \nabla\Phi,\sqrt{\gamma}\d B\ra_{\R^\D}
    \\
    & \stackrel{\eqref{e.def_u}}{=} \frac{1}{2}\alpha\la \gamma,(u^\star)(u^\star)^\intercal\ra_{\S^D}\d r+ \la u^\star,\sqrt{\gamma}\d B\ra_{\R^\D}.
\end{align*}
Using this, we have that $\d Y =0$. This along with the fact that $Y(s) =\Phi(s,x)$ and the easy observation $\E Y(t) = F^{s,t}_\alpha(u^\star,x)$ yields
\begin{align}\label{e.step_3}
    \Phi_\alpha(s,x) = F^{s,t}_\alpha(u^\star,x).
\end{align}

\textit{Step~4}. We upgrade the previous step to an arbitrary $\alpha\in\cM$. We choose a sequence $(\alpha_n)_{n\in\N}$ of continuous paths in $\cM$ which converges to $\alpha$ almost everywhere on $[0,1]$. 
We write $X_n = X_{\alpha_n}$ and $X=X_\alpha$.
Then, the expression of the SDE~\eqref{e.SDE} gives
\begin{align*}
    \Ll|X_n(r) - X(r)\Rr| 
    &= \Ll|\int_s^r \alpha_n(a)\gamma(a) \nabla \Phi_{\alpha_n}(a,X_n(a))\d a - \int_s^r \alpha(a)\gamma(a)\nabla\Phi_\alpha(a,X(a))\d a\Rr|
    \\
    &\leq \int_s^r \Ll|(\alpha_n(a)-\alpha(a))\gamma(a) \nabla \Phi_{\alpha_n}(a,X_n(a))\Rr|\d a
    \\
    &+ \int_s^r \Ll|\alpha(a)\gamma(a)\Ll(\nabla\Phi_{\alpha_n}(a,X_n(a))-\nabla\Phi_\alpha(a,X(a))\Rr)\Rr|\d a
    \\
    &\leq o_n(1) + C_1\int_s^t \Ll|\nabla\Phi_{\alpha_n}(a,X_n(a))-\nabla\Phi_\alpha(a,X(a))\Rr|\d a
\end{align*}
where in the last line we used the convergence of $\alpha_n$ and Proposition~\ref{p.reg_Phi}~\eqref{i.sup|d_xPhi|<infty} to get $o_n(1)$ and the boundedness of $\alpha\gamma$ (due to \ref{i.xi_loc_lip} and $\Psi \in \Pi_\mathrm{Lip}(z)$) to get $C_1$.
Now, using Proposition~\ref{p.reg_Phi}~\eqref{i.dxPhi_cvg} and~\eqref{i.sup|d_xPhi|<infty}, we can get, for some constant $C_2$,
\begin{align*}
    &\int_s^t \Ll|\nabla\Phi_{\alpha_n}(a,X_n(a))-\nabla\Phi_\alpha(a,X(a))\Rr|\d a
    \\
    &\leq o_n(1)+\int_s^t \Ll|\nabla\Phi_\alpha(a,X_n(a))-\nabla\Phi_\alpha(a,X(a))\Rr|\d a
    \\
    &\leq o_n(1) + C_2 \int_s^t \Ll|X_n(a)-X(a)\Rr|\d a.
\end{align*}
By Gr\"onwall's inequality, we can deduce 
\begin{align*}
    \sup_{r\in[s,t]}|X_n(r)-X(r)|\leq C_3 o_n(1)
\end{align*}
almost surely for some constant $C_3$, where $o_n(1)$ is nonrandom.
Using this and Proposition~\ref{p.reg_Phi}~\eqref{i.d_xPhi_exists} and~\eqref{i.dxPhi_cvg}, we can get from~\eqref{e.def_u} 
\begin{align*}\sup_{r\in[s,t]}\Ll|u^\star_{\alpha_n}(r) - u^\star_\alpha(r)\Rr|\leq C_4 o_n(1)
\end{align*}
almost surely for some deterministic $C_4$ and nonrandom $o_n(1)$.
By this and the convergence of $\alpha_n$, we can deduce $\lim_{n\to\infty}F^{s,t}_{\alpha_n}(u^\star_{\alpha_n},x) = F^{s,t}_\alpha(u^\star_\alpha,x)$. Then, we pass~\eqref{e.step_3} for $\alpha_n$ to the limit to get~\eqref{e.step_3} for $\alpha$. This completes the proof.
\end{proof}

\subsection{Convexity of the solution}

We want to understand the convexity of the Parisi PDE solution.
We start with computation of derivatives, for which we need some notation.
For any function $f:\R^\D\to\R$ and $i_1,\,i_2,\,\dots,\, i_p\in \{1,\dots,\D\}$ for some $p\in\N$, we write
\begin{align}\label{e.f_iiii=...}
    f_{i_1i_2\cdots i_p}=\partial_{x_{i_1}}\partial_{x_{i_2}}\cdots \partial_{x_{i_p}}f.
\end{align}
The following generalizes \cite[Lemma~2]{aufche} to the vector case.
\begin{lemma}\label{l.deriv_Phi}
For $\alpha\in\cM$, $0\leq s\leq t\leq 1$, and $x\in\R^\D$, let $X=X_\alpha$ be given in~\eqref{e.SDE}. Then, for $s\leq a \leq b\leq t$ and $\Phi=\Phi_\alpha$, the following holds:
\begin{align*}
    \nabla\Phi(a,X(b))-\nabla\Phi(a,X(a)) & = \int_a^b \nabla^2\Phi \sqrt{\gamma}\d B;
    \\
    \Phi_{kl}(b,X(b)) - \Phi_{kl}(a,X(a)) & = \int_a^b \sum_{i,j=1}^\D\Ll( - \alpha \gamma_{ij}\Phi_{ik}\Phi_{jl}\d r +\Phi_{ikl}\Ll(\sqrt{\gamma}\Rr)_{ij}\d B_j \Rr)
\end{align*}
for all $k,l\in\{1,\dots,\D\}$.
\end{lemma}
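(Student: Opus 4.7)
The plan is to apply It\^o's formula to the processes $r\mapsto \nabla\Phi(r,X(r))$ (component-wise) and $r\mapsto \nabla^2\Phi(r,X(r))$, and then cancel the It\^o drift using the Parisi PDE~\eqref{e.PDE_cts} differentiated once or twice in $x$. I read the first identity as $\nabla\Phi(b,X(b))-\nabla\Phi(a,X(a))$, which is the natural form suggested by its right-hand side and by the $D=1$ analogue in \cite[Lemma~2]{aufche}.

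First I would assume $\alpha\in\cM$ is continuous, in which case Proposition~\ref{p.reg_Phi}\eqref{i.dsdxPhi_exists} supplies both the classical PDE~\eqref{e.PDE_cts} and the smoothness needed to differentiate it in $x$. Differentiating~\eqref{e.PDE_cts} in $x_k$ and using $\gamma_{ij}=\gamma_{ji}$ yields
\begin{equation*}
\partial_s\Phi_k+\tfrac12\sum_{i,j}\gamma_{ij}\Phi_{ijk}+\alpha\sum_{i,j}\gamma_{ij}\Phi_{ik}\Phi_j=0.
\end{equation*}
Applying It\^o to $\Phi_k(r,X(r))$ with $\d X=\alpha\gamma\nabla\Phi\,\d r+\sqrt\gamma\,\d B$, the $\d r$-coefficient is exactly the left-hand side above, hence vanishes, leaving the martingale part $\sum_{i,j}\Phi_{ik}(\sqrt\gamma)_{ij}\,\d B_j$. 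Assembling the $\D$ components gives the first identity.

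For the second identity I would differentiate~\eqref{e.PDE_cts} once more in $x_l$ and apply It\^o to $\Phi_{kl}(r,X(r))$. The It\^o drift is
\begin{equation*}
\partial_s\Phi_{kl}+\sum_i\Phi_{ikl}(\alpha\gamma\nabla\Phi)_i+\tfrac12\sum_{i,j}\Phi_{ijkl}\gamma_{ij},
\end{equation*}
and the twice-differentiated PDE produces the extra cross term $\alpha\sum_{i,j}\gamma_{ij}(\Phi_{ikl}\Phi_j+\Phi_{ik}\Phi_{jl})$ beyond the analogous parabolic piece. Subtracting, the $\Phi_{ikl}\Phi_j$ contribution kills the drift coming from $\d X$, and what survives is precisely $-\alpha\sum_{i,j}\gamma_{ij}\Phi_{ik}\Phi_{jl}$, which matches the stated formula.

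Finally I would extend to arbitrary $\alpha\in\cM$ by approximation. For $\alpha_n\in\cM_\d$ converging to $\alpha$, the argument above applies separately on each interval of constancy of $\alpha_n$; by continuity of $\partial^\bi_x\Phi_{\alpha_n}$ on all of $[0,1]\times\R^\D$ (Lemma~\ref{l.deri_disc}~\eqref{e.d^iPhi_in_C}), the pieces splice to the identities on $[a,b]$. To pass to the limit, I would use Proposition~\ref{p.reg_Phi}\eqref{i.dxPhi_cvg} and~\eqref{i.sup|d_xPhi|<infty} for uniform convergence and uniform boundedness of the spatial derivatives, together with a Gr\"onwall argument as in Step~4 of the proof of Proposition~\ref{p.var} to get uniform convergence $X_{\alpha_n}\to X_\alpha$. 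The drift integrals then converge by dominated convergence and the stochastic integrals converge in $L^2$ via the It\^o isometry. The main obstacle is this last step: the PDE fails to hold pointwise at jumps of $\alpha$, so one must either work piecewise on $\cM_\d$ or approximate by continuous paths, and in either case carefully justify $L^2$-convergence of the stochastic integrals.
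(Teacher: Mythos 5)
Your proof is correct and follows essentially the same route as the paper's: assume $\alpha$ continuous so Proposition~\ref{p.reg_Phi}\eqref{i.dsdxPhi_exists} gives the classical PDE, differentiate it in $x_k$ and $x_l$, apply It\^o's formula to $\Phi_k(r,X(r))$ and $\Phi_{kl}(r,X(r))$, cancel the drift against the differentiated PDE, and extend to general $\alpha\in\cM$ by the approximation argument from Step~4 of Proposition~\ref{p.var}. You are also right that the first identity as printed has a typo --- $\nabla\Phi(a,X(b))$ should read $\nabla\Phi(b,X(b))$, as confirmed by the paper's own derivation $\d\Phi_k=\sum_{i,j}\Phi_{ik}(\sqrt\gamma)_{ij}\,\d B_j$.
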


In the first line of the display, $\nabla^2\Phi \sqrt{\gamma}$ is the matrix multiplication between the Hessian $\nabla^2\Phi$ and $\sqrt{\gamma}$; and $\nabla^2\Phi \sqrt{\gamma}\d B$ as the multiplication of the matrix $\nabla^2\Phi \sqrt{\gamma}$ by the vector $\d B$. The second line is written in the notation introduced in~\eqref{e.f_iiii=...}.

\begin{proof}
By an approximation argument similar to that in Step~4 of the proof of Proposition~\ref{p.var}, we can assume that $\alpha$ is continuous.
We rewrite~\eqref{e.PDE_cts} as
\begin{align*}
    \partial_s\Phi + \frac{1}{2}\sum_{i,j=1}^\D \gamma_{ij}\Ll(\Phi_{ij} + \Phi_i\Phi_j\Rr) = 0.
\end{align*}
Allowed by Proposition~\ref{p.reg_Phi},
using the product rule and the symmetry of $\gamma$, we can compute
\begin{gather*}
    \partial_s\Phi_k + \sum_{i,j=1}^\D \gamma_{ij}\Ll(\frac{1}{2}\Phi_{ijk} + \alpha\Phi_{ik}\Phi_j\Rr) = 0, \\
    \partial_s\Phi_{kl} + \sum_{i,j=1}^\D \gamma_{ij}\Ll(\frac{1}{2}\Phi_{ijkl} + \alpha\Phi_{ikl}\Phi_j+\alpha\Phi_{ik}\Phi_{jl}\Rr) = 0\end{gather*}
In the following, the notation~\eqref{e.f_iiii=...} only applies to $\Phi$. We use the shorthand $\Phi(r) = \Phi(r,X(r))$.
Writing $X=(X_i)_{i=1}^\D$, we can see from~\eqref{e.SDE} that
\begin{align*}
    \d X_i = \alpha \sum_{j=1}^\D\Ll(\gamma_{ij}\Phi_j \d r + \Ll(\sqrt{\gamma}\Rr)_{ij} \d B_j\Rr).
\end{align*}
Using the It\^o formula and the computations above, we have
\begin{align*}
    \d \Phi_k & = \partial_s \Phi_k \d r + \sum_{i=1}^\D \Phi_{ik}\d X_i + \frac{1}{2}\sum_{i,j=1}^\D \Phi_{ijk}\gamma_{ij}\d r
    \\
    & =- \sum_{i,j=1}^\D\Ll(\frac{1}{2}\gamma_{ij}\Phi_{ijk}+\alpha \gamma_{ij}\Phi_{ik}\Phi_j \Rr)\d r
     + \sum_{i,j=1}^\D \Ll(\Phi_{ik}\alpha\gamma_{ij}\Phi_{j} \d r+\Phi_{ik}\Ll(\sqrt{\gamma}\Rr)_{ij}\d B_j\Rr)
    \\
    & + \frac{1}{2}\sum_{i,j}^\D \Phi_{ijk}\gamma_{ij} \d r
     = \sum_{i,j=1}^\D \Phi_{ik}\Ll(\sqrt{\gamma}\Rr)_{ij}\d B_j,
\end{align*}
and
\begin{align*}
    \d \Phi_{kl} & = \partial_s \Phi_{kl} \d r + \sum_{i=1}^\D \Phi_{ikl}\d X_i + \frac{1}{2}\sum_{i,j=1}^\D \Phi_{ijkl}\gamma_{ij}\d r
    \\
    & = -\sum_{i,j=1}^\D\Ll( \frac{1}{2}\gamma_{ij}\Phi_{ijkl}+\alpha\gamma_{ij}\Phi_{ikl}\Phi_j + \alpha \gamma_{ij}\Phi_{ik}\Phi_{jl}\Rr)\d r 
    \\
    &+ \sum_{i,j=1}^\D \Ll( \Phi_{ikl}\alpha\gamma_{ij}\Phi_j \d r+\Phi_{ikl}\Ll(\sqrt{\gamma}\Rr)_{ij}\d B_j\Rr)
    + \frac{1}{2}\sum_{i,j=1}^\D \Phi_{ijkl}\gamma_{ij}\d r
    \\
    & = \sum_{i,j=1}^\D\Ll(- \alpha \gamma_{ij} \Phi_{ik}\Phi_{jl} + \Phi_{ikl}\Ll(\sqrt{\gamma}\Rr)_{ij}\d B_j\Rr).
\end{align*}
The announced equations immediately follow from the above.
\end{proof}

Results on the convexity of the Parisi PDE solution are summarized below.
\begin{lemma}
\label{l.Phi_convex}
The following holds:
\begin{enumerate}
    \item\label{i.Phi_alpha_convex} $\Phi_\alpha(s,\cdot)$ is convex for every $\alpha\in\cM$ and $s\in[0,1]$;
    \item\label{i.d^2Phi>0} if $P_1$ satisfies \eqref{e.not_Dirac}, then there is $y\in\R^\D$ such that $y\cdot \nabla^2 \Phi_\alpha(s,x)y>0$ for every $\alpha\in\cM$ and $(s,x)\in [0,1]\times \R^\D$;
    \item \label{i.d^2Phi<C}
    there is a constant $C$ such that
    $\nabla^2\Phi_\alpha(s,x) \cleq C\identity_\D$ for every $\alpha\in\cM$ and $(s,x)\in [0,1]\times \R^\D$.
\end{enumerate}
\end{lemma}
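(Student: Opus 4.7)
\textbf{Parts~\eqref{i.Phi_alpha_convex} and~\eqref{i.d^2Phi<C}.} I argue~\eqref{i.Phi_alpha_convex} first for $\alpha\in\cM_\d$ by downward induction on the intervals $[q_{l-1},q_l)$. The terminal $\Phi_\alpha(1,\cdot) = \phi$ from~\eqref{e.phi=initial} is a log-moment generating function, hence convex in $x$. For each interval, formula~\eqref{e.Phi(s,x)} expresses $\Phi_\alpha(s,x)$ as $\tfrac{1}{m_{l-1}}\log\E e^{m_{l-1}\Phi_\alpha(q_l,x+Z)}$ (or its $m_{l-1}=0$ limit $\E\Phi_\alpha(q_l,x+Z)$) for a Gaussian vector $Z$; H\"older's inequality applied inside the Gaussian expectation shows this operation preserves convexity in $x$, so convexity propagates down to $s=0$. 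For general $\alpha\in\cM$, the uniform limit from Corollary~\ref{c.well-defined} preserves convexity. Part~\eqref{i.d^2Phi<C} then follows: Proposition~\ref{p.reg_Phi}~\eqref{i.sup|d_xPhi|<infty} provides a uniform bound $|\partial_{x_k}\partial_{x_l}\Phi_\alpha|\le M$, so the trace of the positive semi-definite matrix $\nabla^2\Phi_\alpha$ is at most $DM$, which bounds its operator norm and gives $\nabla^2\Phi_\alpha\cleq DM\,\identity_\D$.

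\textbf{Part~\eqref{i.d^2Phi>0}.} Under~\eqref{e.not_Dirac}, pick distinct $\sigma_1,\sigma_2\in\supp(P_1)$ and set $y := \sigma_1 - \sigma_2$. For every $x\in\R^\D$ the tilted Gibbs measure $\la\cdot\ra_x$ of~\eqref{e.<>_x} has the same support as $P_1$, so $y\cdot\sigma$ takes distinct values on positive-mass neighborhoods of $\sigma_1$ and $\sigma_2$; consequently $y^\intercal\nabla^2\phi(x)y = \mathrm{Var}_{\la\cdot\ra_x}(y\cdot\sigma) > 0$ for every $x$. Assuming first that $\alpha$ is continuous, let $X_\alpha$ solve~\eqref{e.SDE} with $X_\alpha(s)=x$. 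Contracting the second identity of Lemma~\ref{l.deriv_Phi} (with $a=s$, $b=1$) by $y_ky_l$ and taking expectation to kill the martingale, I obtain
\begin{align*}
y^\intercal\nabla^2\Phi_\alpha(s,x)\,y \,=\, \E\bigl[y^\intercal\nabla^2\phi(X_\alpha(1))y\bigr] + \E\int_s^1 \alpha(r)\,v(r)^\intercal\gamma(r)v(r)\,\d r,
\end{align*}
where $v(r) := \nabla^2\Phi_\alpha(r,X_\alpha(r))\,y$. Since $\gamma(r)\in\S^\D_+$ (by \ref{i.xi_incre} applied to the increasing path $\Psi$) and $\alpha\ge 0$, the second term is nonnegative, while the first is strictly positive by the pointwise positivity of $y^\intercal\nabla^2\phi\,y$ established above. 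For general $\alpha\in\cM$, I approximate by continuous $\alpha_n\in\cM$ as in Step~4 of the proof of Proposition~\ref{p.var}, whence $X_{\alpha_n}(1)\to X_\alpha(1)$ almost surely; the left-hand side converges by Proposition~\ref{p.reg_Phi}~\eqref{i.dxPhi_cvg} and the first term on the right converges by continuity and boundedness of $\nabla^2\phi$, yielding $y^\intercal\nabla^2\Phi_\alpha(s,x)y \geq \E[y^\intercal\nabla^2\phi(X_\alpha(1))y] > 0$.

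The delicate step is part~\eqref{i.d^2Phi>0}: I must select a single direction $y$ that witnesses strict convexity of $\phi$ at \emph{every} point $x\in\R^\D$ (which is why I exploit the support of $P_1$ rather than, say, its covariance matrix at a fixed $x$), and then transport this pointwise strictness backward in $s$ via the stochastic representation without losing strictness when going from continuous to arbitrary $\alpha\in\cM$.
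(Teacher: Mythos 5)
Your proposal is correct and reaches all three parts. The core argument for part~\eqref{i.d^2Phi>0} is the same as the paper's: apply Lemma~\ref{l.deriv_Phi} with $a=s$, $b=t=1$, contract with $y\otimes y$, use $\gamma\cgeq 0$ to drop the nonnegative integral, and reduce to strict positivity of $y^\intercal\nabla^2\phi(x)y$ uniformly in $x$. There are two small stylistic divergences worth noting. For the terminal condition, you construct $y=\sigma_1-\sigma_2$ explicitly from two points of $\supp P_1$ and verify positivity of $\mathrm{Var}_{\la\cdot\ra_x}(y\cdot\sigma)$ directly; the paper instead argues by contradiction (if no such $y$ exists, then for every $y$ there is an $x_y$ with $\sigma\cdot y$ a.e.\ constant under $P_1$, forcing $P_1$ to be Dirac), which avoids having to fix a single pair $(\sigma_1,\sigma_2)$ but is otherwise the same observation. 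For part~\eqref{i.Phi_alpha_convex}, you prove convexity independently by iterating H\"older through the Cole--Hopf recursion~\eqref{e.Phi(s,x)} and passing to the uniform limit, whereas the paper extracts it as a free byproduct of the same identity~\eqref{e.ynabla^2Phiy>} used for part~\eqref{i.d^2Phi>0}; your route is self-contained and valid but slightly longer. Your part~\eqref{i.d^2Phi<C} uses trace control of a PSD matrix (hence relies on part~\eqref{i.Phi_alpha_convex}), while the paper proves the generic bound $a\cleq\sqrt{D}|a|\identity_\D$ for all symmetric $a$ and so does not need positive semidefiniteness; again both are fine. Finally, your explicit re-approximation of general $\alpha$ by continuous ones in part~\eqref{i.d^2Phi>0} duplicates work that is already packaged into the statement of Lemma~\ref{l.deriv_Phi} (which holds for all $\alpha\in\cM$), so that step could be dropped.
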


\begin{proof}
Fix any $\alpha$ and write $\Phi=\Phi_\alpha$.
We first prove parts~\eqref{i.Phi_alpha_convex} and~\eqref{i.d^2Phi>0} at $s=1$ for $\Phi(1,\cdot)=\phi$.
For any $y\in\R^\D$, we can compute
\begin{align}\label{e.yd^2phiy}
    y\cdot \nabla^2\phi(x) y = \frac{\d^2}{\d\eps^2} \phi(x+\eps y)\Big|_{\eps =0}= \la \Ll(\sigma\cdot y - \la \sigma\cdot y\ra_x\Rr)^2\ra_x \geq 0
\end{align}
where $\la\cdot\ra_x$ is given in~\eqref{e.<>_x}. This verifies part~\eqref{i.Phi_alpha_convex} at $s=1$. 

To show part~\eqref{i.d^2Phi>0} at $s=1$, we argue by contradiction and assume that for every $y\in\R^\D$ there is $x_y\in \R^\D$ such that $y\cdot \nabla^2 \phi(x_y)y=0$. Setting $r_y = \la \sigma\cdot y\ra_{x_y}$, from~\eqref{e.yd^2phiy}, we have $\sigma\cdot y = r_y$ a.s.\ under $\la\cdot\ra_{x_y}$. Since $e^{\sigma\cdot x_y}$ in~\eqref{e.<>_x} is strictly positive, we must have $\sigma\cdot y =r_y$ a.e.\ under $P_1$. Varying $y$, we deduce that $\sigma$ is constant a.e.\ under $P_1$ and thus $P_1$ is a Dirac measure which contradicts~\eqref{e.not_Dirac}. Hence, part~\eqref{i.d^2Phi>0} holds at $s=1$.

Next, we prove parts~\eqref{i.Phi_alpha_convex} and~\eqref{i.d^2Phi>0} for $s\in[0,1)$.
We apply Lemma~\ref{l.deriv_Phi} with $a=s$ and $b=t=1$ so that $\Phi(a,X(a))=\Phi(s,x)$ and $\Phi(b,X(b))=\phi(X(1))$. Hence, for any $y\in\R^\D$, we get
\begin{align}
    y\cdot \nabla^2\Phi(s,x)y &= \sum_{k,l=1}^\D y_k\Phi_{kl}(s,x)y_l \notag
    \\
    &= \E\Ll[\sum_{k,l=1}^\D y_k\phi_{kl}(X(1))y_l + \int_s^1\sum_{i,j,k,l=1}^\D \alpha\gamma_{ij}\Phi_{ik}\Phi_{jl}y_ky_l \d r\Rr] \notag
    \\
    & = \E \Ll[y\cdot \nabla^2\phi(X(1))y+\int_s^1 \alpha\gamma\cdot \Ll(\nabla^2\Phi y\Rr)\Ll(\nabla^2\Phi y\Rr)^\intercal \d r\Rr]\label{e.ynabla^2Phiy>}
\end{align}
where inside the expectation we used the short hand $\Phi(r) = \Phi(r,X(r))$.
Due to $\gamma(s)\in \S^\D_+$ for every $s$ (by~\ref{i.xi_incre} and $\Psi\in \Pi_\mathrm{Lip}(z)$) and~\eqref{e.ab>0}, the last integral is nonnegative a.s.
Hence, part~\eqref{i.Phi_alpha_convex} follows from the special case $s=1$.
Let $y$ be given in part~\eqref{i.d^2Phi>0} at $s=1$. Then, the above display implies that part~\eqref{i.d^2Phi>0} holds at $s\in[0,1)$.

It remains to verify part~\eqref{i.d^2Phi<C}. We show
\begin{align}\label{e.a<..I_D}
    a \cleq \sqrt{D}|a|\identity_\D,\quad\forall a\in \S^\D.
\end{align}
Let $a = qbq^\intercal$ be the diagonalization of $a$, where $b\in\S^\D$ is diagonal and $q\in\R^{\D\times \D}$ is orthogonal. For every $i\in\{1,\dots,\D\}$,
\begin{align*}
    b_{ii} \leq \sqrt{\sum_{j=1}^\D (bb)_{jj}} \leq \sqrt{\sum_{j=1}^\D \tr(bb)} = \sqrt{D\tr(bb)}=\sqrt{D\tr(aa)}=\sqrt{D}|a|.
\end{align*}
Hence, we deduce $x^\intercal bx\leq \sqrt{D}|a| x^\intercal\identity_\D x$ for every $x\in\R^\D$, which implies~\eqref{e.a<..I_D}. Then, part~\eqref{i.d^2Phi<C} follows from~\eqref{e.a<..I_D} and Proposition~\ref{p.reg_Phi}~\eqref{i.sup|d_xPhi|<infty}. 
\end{proof}

\subsection{Uniqueness of the minimizer}

\begin{lemma}\label{l.unique_maximizer}
Assume~\eqref{e.gamma>0}.
Let $\alpha\in\cM$, $0\leq s<t\leq 1$, and $C$ be given in Lemma~\ref{l.Phi_convex}~\eqref{i.d^2Phi<C}. If $\alpha(s)>0$ and
\begin{align}\label{e.int_s^ttr<c}
    \int_s^t \alpha\tr(\gamma)\d r <C^{-1},
\end{align}
then, for every $x\in \R^\D$, $u^\star_\alpha$ given in~\eqref{e.def_u} is the unique maximizer of~\eqref{e.Phi=maxF} over $\mathcal{D}[s,t]$.
\end{lemma}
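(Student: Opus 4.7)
The plan is to strengthen the variational formula \eqref{e.Phi=maxF} by proving that, under the stated hypotheses, the map $u\mapsto F^{s,t}_\alpha(u,x)$ is strictly concave on $\mathcal{D}[s,t]$. Strict concavity forces uniqueness of the maximizer: two distinct maximizers $u_0\neq u_1$ would give $F^{s,t}_\alpha(\tfrac{1}{2}(u_0+u_1),x)>\Phi_\alpha(s,x)$, contradicting Proposition~\ref{p.var}.

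To exhibit this strict concavity, I fix $u,v\in\mathcal{D}[s,t]$, set $h(\epsilon)=F^{s,t}_\alpha(u+\epsilon v,x)$, and estimate $h''$. The Lagrangian $L^{s,t}_\alpha$ is quadratic in $u$, contributing $\E\!\int_s^t \alpha v^\intercal\gamma v\,\d r$ to $-h''$. The cost $C^{s,t}_\alpha$ depends on $\epsilon$ only through the shift $W:=\int_s^t \alpha\gamma v\,\d r$ of the argument of $\Phi_\alpha(t,\cdot)$; differentiating twice and using the uniform Hessian bound $\nabla^2\Phi_\alpha\cleq C\identity_\D$ from Lemma~\ref{l.Phi_convex}~\eqref{i.d^2Phi<C} bounds its contribution by $C\,\E|W|^2$. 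Hence
\[
 h''(\epsilon)\;\leq\; C\,\E|W|^2\;-\;\E\!\int_s^t \alpha v^\intercal\gamma v\,\d r.
\]
The decisive step is to absorb $|W|^2$ into the Lagrangian quadratic form. Introducing the (random) semi-inner product $\la f,g\ra_\gamma:=\int_s^t\alpha f^\intercal\gamma g\,\d r$, the $k$-th component of $W$ is $\la e_k,v\ra_\gamma$, and applying Cauchy--Schwarz componentwise and summing over $k\in\{1,\dots,D\}$ gives $|W|^2\leq \bigl(\int_s^t\alpha\tr(\gamma)\,\d r\bigr)\int_s^t\alpha v^\intercal\gamma v\,\d r$. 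Substituting and using hypothesis~\eqref{e.int_s^ttr<c} yields
\[
 h''(\epsilon)\;\leq\;\Bigl(C\!\int_s^t\alpha\tr(\gamma)\,\d r-1\Bigr)\,\E\!\int_s^t\alpha v^\intercal\gamma v\,\d r,
\]
with the bracketed prefactor strictly negative. Finally, since $\alpha$ is increasing with $\alpha(s)>0$ one has $\alpha>0$ on $[s,t]$, and \eqref{e.gamma>0} gives $\gamma\in\S^\D_{++}$ a.e.\ on $[s,t]$; hence the quadratic form $\E\int_s^t\alpha v^\intercal\gamma v\,\d r$ vanishes only if $v=0$ in $\mathcal{D}[s,t]$. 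So $h''<0$ whenever $v\neq 0$, giving the required strict concavity.

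The main obstacle I expect is the Cauchy--Schwarz step: a crude bound of $|W|^2$ by $(\int_s^t|\alpha\gamma v|\,\d r)^2$ would fail to recover the $\alpha\gamma$ weighting that appears in the Lagrangian. The componentwise application together with the identity $\la e_k,e_k\ra_\gamma=\int_s^t\alpha\gamma_{kk}\,\d r$ is precisely what turns the sum over $k$ into $\tr(\gamma)$, yielding the constant that matches hypothesis~\eqref{e.int_s^ttr<c}. The remaining work — differentiating under the expectation and justifying the second-derivative manipulations — is routine given the regularity in Proposition~\ref{p.reg_Phi}.
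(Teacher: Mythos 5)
Your proposal is correct and follows essentially the same route as the paper: establish strict concavity of $u\mapsto F^{s,t}_\alpha(u,x)$ by computing the second derivative along a line segment, controlling the cost contribution via the uniform Hessian bound from Lemma~\ref{l.Phi_convex}~\eqref{i.d^2Phi<C}, absorbing the resulting $\E|W|^2$ into the Lagrangian quadratic form by a componentwise Cauchy--Schwarz that produces $\int_s^t\alpha\tr(\gamma)\,\d r$, and closing with the positivity of the Lagrangian form via $\alpha(s)>0$ and \eqref{e.gamma>0}. The only cosmetic difference is that you run the Cauchy--Schwarz step through the weighted semi-inner product $\la f,g\ra_\gamma=\int_s^t\alpha f^\intercal\gamma g\,\d r$ rather than through the explicit factorization $a=(\alpha\gamma)^{1/2}$ used in the paper; both yield the identical estimate.
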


\begin{proof}
We show that $u\mapsto F^{s,t}_\alpha(u,x)$ is strictly concave.
Let $u_0,\, u_1\in \mathcal{D}[s,t]$ be distinct and thus, by the definition of the norm on $\mathcal{D}[s,t]$ in~\eqref{e.norm}, we have
\begin{align}\label{e.Eint|u-u|>0}
    \E \int_s^t \Ll|u_1(r)-u_0(r)\Rr|^2\d r >0.
\end{align}
For $\lambda\in [0,1]$, we set $u_\lambda = (1-\lambda) u_0 + \lambda u_1$. Using the definition of $F_\alpha^{s,t}$ in~\eqref{e.F=}, we can compute
\begin{align*}
    \frac{\d }{\d \lambda} F^{s,t}_\alpha(u_\lambda, x) = \E \la \nabla \Phi_\alpha(t,\cdots),\,\int_s^t\alpha\gamma(u_1-u_0)\d r \ra_{\R^\D}
    - \E \int_s^t \alpha \la \gamma,\, u_\lambda(u_1-u_0)^\intercal\ra_{\S^\D} \d r
\end{align*}
where in $\cdots$ we omitted $x+\int_s^t\alpha\gamma u_\lambda \d r + \int_s^t\sqrt{\gamma}\d B$.
Differentiate it once more to get
\begin{align*}
    \frac{\d^2}{\d\lambda^2} F^{s,t}_\alpha(u_\lambda,x) = \E \la\nabla^2\Phi_\alpha(t,\cdots),\,\Ll(\int_s^t\alpha\gamma(u_1-u_0)\d r\Rr)\Ll(\int_s^t\alpha\gamma(u_1-u_0)\d r\Rr)^\intercal \ra_{\S^\D}
    \\
    - \E \int_s^t \alpha \la \gamma, \, (u_1-u_0)(u_1-u_0)^\intercal \ra_{\S^\D} \d r.
\end{align*}
By Lemma~\ref{l.Phi_convex}~\eqref{i.d^2Phi<C}, the first expectation is bounded from above by
\begin{align*}
    C\E \Ll|\int_s^t \alpha \gamma(u_1-u_0)\d r\Rr|^2.
\end{align*}
Writing $a = (\alpha\gamma)^\frac{1}{2}$ and $v= u_1-u_0$ and using the Cauchy--Schwarz inequality, we have
\begin{align*}
    \Ll|\int_s^t \alpha \gamma(u_1-u_0)\d r\Rr|^2
    = \sum_{i=1}^\D\Ll(\int_s^t \sum_{j=1}^\D a_{ij}(av)_j\d r\Rr)^2
    \\
    \leq \sum_{i=1}^\D \Ll(\int_s^t \sum_{j=1}^\D a^2_{ij}\d r\Rr)\Ll(\int_s^t \sum_{j=1}^\D \Ll((av)_j\Rr)^2\d r\Rr)
    \\
    =\Ll(\int_s^t \sum_{i,\,j=1}^\D a^2_{ij}\d r\Rr)\Ll(\int_s^t \sum_{j=1}^\D \Ll((av)_j\Rr)^2\d r\Rr) 
    \\
    =\Ll(\int_s^t \tr(aa)\d r\Rr) \Ll(\int_s^t \la a^2,\, vv^\intercal\ra_{\S^\D}\d r\Rr).
\end{align*}
Combining the above estimates, we obtain
\begin{align*}
    \frac{\d^2}{\d\lambda^2} F^{s,t}(u_\lambda,x) \leq  \Ll(C\int_s^t \alpha\tr(\gamma)\d r -1\Rr) \E \int_s^t \alpha \la \gamma,\, (u_1-u_0)(u_1-u_0)^\intercal \ra_{\S^\D} \d r.
\end{align*}
Since $\alpha(s)>0$ and $\alpha$ is increasing we have $\alpha>c_1$ on $[s,t]$ for some $c_1>0$. The assumption~\eqref{e.gamma>0} along with~\eqref{e.Eint|u-u|>0} ensures that the expectation in the above display is strictly positive.
Then,~\eqref{e.int_s^ttr<c} makes the right-hand side of the display strictly negative. Hence, $F^{s,t}_\alpha(\cdot,x)$ is strictly concave on $\mathcal{D}[s,t]$ and thus the maximizer must be unique. 
\end{proof}

\subsection{Strict functional convexity}
We investigate the convexity of $\alpha\mapsto \Phi_\alpha$.
\begin{lemma}\label{l.Phi_convex_alpha}
Let $\alpha_0,\,\alpha_1\in\cM$. For every $s\in[0,1]$, $\lambda \in [0,1]$, and every $x_0,\,x_1\in \R^\D$,
\begin{align*}
    \Phi_{\alpha_\lambda}(s,x_\lambda) \leq(1-\lambda)\Phi_{\alpha_0}(s,x_0)+\lambda \Phi_{\alpha_1}(s,x_1)
\end{align*}
where $\alpha_\lambda = (1-\lambda)\alpha_0+ \lambda \alpha_1$ and $x_\lambda = (1-\lambda)x_0+\lambda x_1$.
\end{lemma}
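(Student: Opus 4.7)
The plan is to reduce this to the variational representation in Proposition~\ref{p.var} together with the convexity of the terminal condition $\phi$, which is exactly the $s=1$ case of Lemma~\ref{l.Phi_convex}~\eqref{i.Phi_alpha_convex}. The key structural observation is that the drift $\int_s^t \alpha\gamma u\,dr$ and the penalty $\frac{1}{2}\int_s^t \alpha\langle\gamma, uu^\intercal\rangle \,dr$ appearing in the definitions \eqref{e.C=} and \eqref{e.L=} are both linear in $\alpha$, while the stochastic integral $\int_s^t\sqrt\gamma\,dB$ does not depend on $\alpha$ at all. Hence a single test process $u$ can be used simultaneously for all three paths $\alpha_0,\alpha_1,\alpha_\lambda$.

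The case $s=1$ is immediate from convexity of $\phi$. For $s<1$, I would pick any $u\in\mathcal{D}[s,1]$ and write the argument of $\phi$ in $C^{s,1}_{\alpha_\lambda}(u,x_\lambda)$ as
\[
  x_\lambda + \int_s^1 \alpha_\lambda(r)\gamma(r)u(r)\,dr + \int_s^1\sqrt{\gamma(r)}\,dB(r) = (1-\lambda)A_0+\lambda A_1,
\]
where $A_i = x_i + \int_s^1 \alpha_i(r)\gamma(r)u(r)\,dr + \int_s^1\sqrt{\gamma(r)}\,dB(r)$ for $i\in\{0,1\}$. The convexity of $\phi$ then gives $C^{s,1}_{\alpha_\lambda}(u,x_\lambda) \leq (1-\lambda)C^{s,1}_{\alpha_0}(u,x_0)+\lambda C^{s,1}_{\alpha_1}(u,x_1)$ pathwise, while $L^{s,1}_{\alpha_\lambda}(u) = (1-\lambda)L^{s,1}_{\alpha_0}(u)+\lambda L^{s,1}_{\alpha_1}(u)$ holds identically. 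Taking expectations yields
\[
  F^{s,1}_{\alpha_\lambda}(u,x_\lambda) \leq (1-\lambda)F^{s,1}_{\alpha_0}(u,x_0)+\lambda F^{s,1}_{\alpha_1}(u,x_1).
\]

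To finish, I would take the supremum over $u\in\mathcal{D}[s,1]$ on both sides. On the left, Proposition~\ref{p.var} with $t=1$ identifies this supremum with $\Phi_{\alpha_\lambda}(s,x_\lambda)$. On the right, the supremum of a sum is bounded by the sum of suprema, each of which is again given by Proposition~\ref{p.var} as $\Phi_{\alpha_i}(s,x_i)$. This yields the announced inequality. I do not anticipate a genuine obstacle here: once one notices that a single $u$ can serve all three paths, both ingredients (variational formula and convexity of $\phi$) are already in hand, and the only verification needed is that the linear decomposition of the drift is compatible with the convexity step applied to $\phi$.
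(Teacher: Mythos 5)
Your proposal is correct and matches the paper's own proof: both rely on the linearity of $L^{s,1}_{\cdot}(u)$ in $\alpha$, the convexity of the terminal datum $\phi$ (Lemma~\ref{l.Phi_convex}~\eqref{i.Phi_alpha_convex} at $s=1$) applied to the linearly-decomposed argument inside $C^{s,1}_{\cdot}$, and the variational formula of Proposition~\ref{p.var} with $t=1$ to pass from pointwise convexity in $u$ to the stated inequality for the solution.
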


\begin{proof}
Recall definitions in~\eqref{e.F=}--\eqref{e.L=}.
For any $u\in \mathcal{D}[s,1]$, it is immediate that
\begin{align*}
    L^{s,1}_{\alpha_\lambda} (u) = (1-\lambda)L^{s,1}_{\alpha_0}(u)+ \lambda L^{s,1}_{\alpha_1}(u).
\end{align*}
Since $\Phi_{\alpha_\lambda}(1,\cdot)= \phi$ for every $\lambda$, the convexity of $\phi$ due to Lemma~\ref{l.Phi_convex}~\eqref{i.Phi_alpha_convex} implies
\begin{align*}
    C^{s,1}_{\alpha_\lambda}(u,x_\lambda) \leq (1-\lambda)C^{s,1}_{\alpha_0}(u,x_0) + \lambda C^{s,1}_{\alpha_1}(u,x_1).
\end{align*}
Therefore,
\begin{align*}
    F^{s,1}_{\alpha_\lambda}(u,x_\lambda) \leq (1-\lambda)F^{s,1}_{\alpha_0}(u,x_0) + \lambda F^{s,1}_{\alpha_1}(u,x_1).
\end{align*}
This along with Proposition~\ref{p.var} at $t=1$ finishes the proof.
\end{proof}

\begin{proposition}\label{p.strict_convex}
Assume~\eqref{e.not_Dirac} and~\eqref{e.gamma>0}.
Let $\alpha_0,\,\alpha_1\in\cM$ be distinct and set
\begin{align*}
    \tau = \min \Ll\{s\in [0,1]:\: \alpha_0(r)=\alpha_1(r),\ \forall r\in [s,1]\Rr\}.
\end{align*}
Then, for every $s\in [0,\tau)$, $\lambda\in (0,1)$, and $x_0,\,x_1\in\R^\D$,
\begin{align}\label{e.ineq_strict_convex}
    \Phi_{\alpha_\lambda}(s,x_\lambda) <(1-\lambda)\Phi_{\alpha_0}(s,x_0)+\lambda \Phi_{\alpha_1}(s,x_1)
\end{align}
where $\alpha_\lambda = (1-\lambda)\alpha_0+ \lambda \alpha_1$ and $x_\lambda = (1-\lambda)x_0+\lambda x_1$.
\end{proposition}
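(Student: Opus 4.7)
The plan is to mirror the proof of Lemma~\ref{l.Phi_convex_alpha} but with the specific test function $u^\star=u^\star_{\alpha_\lambda}$ from~\eqref{e.def_u}, and then to show that the resulting chain cannot collapse to equality. Applying Proposition~\ref{p.var} at $t=1$, the convexity of $\phi=\Phi_{\alpha_j}(1,\cdot)$ (Lemma~\ref{l.Phi_convex}~\eqref{i.Phi_alpha_convex}), and the affine dependence of $L^{s,1}$ on $\alpha$ gives the chain
\[
\Phi_{\alpha_\lambda}(s,x_\lambda) = F^{s,1}_{\alpha_\lambda}(u^\star,x_\lambda)\le (1-\lambda)F^{s,1}_{\alpha_0}(u^\star,x_0)+\lambda F^{s,1}_{\alpha_1}(u^\star,x_1)\le (1-\lambda)\Phi_{\alpha_0}(s,x_0)+\lambda\Phi_{\alpha_1}(s,x_1),
\]
so it suffices to exclude simultaneous equality in the two inequalities.

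Let $A_j=x_j+\int_s^1\alpha_j\gamma u^\star\,dr+M(1)-M(s)$ and $A_\lambda=(1-\lambda)A_0+\lambda A_1$. Equality in the first step forces $\phi$ to be a.s.\ affine on the random segment $[A_0,A_1]$. Since $\nabla^2\phi(x)=\mathrm{Cov}_{\la\cdot\ra_x}(\sigma)$ (as in the proof of Lemma~\ref{l.Phi_convex}~\eqref{i.d^2Phi>0}) and \eqref{e.not_Dirac} forces the degenerate subspace $V_0=\{v\in\R^\D:v\cdot\sigma\text{ is }P_1\text{-a.s.\ constant}\}$ to be a proper subspace, I would pick $y\in V_0^\perp\setminus\{0\}$ and translate the rigidity into $A_1-A_0\in V_0$ a.s., in particular $y\cdot(A_1-A_0)=0$ a.s. Equality in the second step forces $u^\star$ to be a maximizer of $F^{s,1}_{\alpha_j}(\cdot,x_j)$ for each $j\in\{0,1\}$. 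To extract information I would subdivide $[s,1]$ into sub-intervals short enough that Lemma~\ref{l.unique_maximizer} applies piecewise (the degenerate opening case $\alpha_\lambda(s)=0$ reduces to the first instant where $\alpha_\lambda$ becomes positive via the pure heat-transport form of the PDE), and then iterate Proposition~\ref{p.var} across the subdivision to conclude the fusion $u^\star\equiv u^\star_{\alpha_0}\equiv u^\star_{\alpha_1}$ on $[s,1]$, where each $u^\star_{\alpha_j}$ is launched from $x_j$ via~\eqref{e.SDE} with coefficient $\alpha_j$.

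The main obstacle is deriving a contradiction from this fusion together with $A_1-A_0\in V_0$ a.s., given that $\alpha_0\neq\alpha_1$ on $[s,\tau)$. A direct It\^o computation in the spirit of Lemma~\ref{l.deriv_Phi}, using the Parisi PDE to cancel drifts, shows that $u^\star_\alpha$ is a martingale with $du^\star_\alpha=\nabla^2\Phi_\alpha(r,X_\alpha(r))\sqrt{\gamma(r)}\,dB$. Matching the martingale parts of the three fused processes, together with the invertibility of $\sqrt\gamma$ from~\eqref{e.gamma>0}, forces $\nabla^2\Phi_{\alpha_0}(r,X_{\alpha_0}(r))=\nabla^2\Phi_{\alpha_\lambda}(r,X_{\alpha_\lambda}(r))$ a.s., with analogous identities at all higher derivatives by iteration. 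The finite-variation part of~\eqref{e.SDE} gives $X_{\alpha_\lambda}(r)-X_{\alpha_0}(r)=\lambda\bigl[(x_1-x_0)+\int_s^r(\alpha_1-\alpha_0)\gamma u^\star\,dr'\bigr]$, which by $A_1-A_0\in V_0$ must lie in $V_0$ for a.e.\ $r\in[s,1]$. Translating this back into constraints on $\alpha_1-\alpha_0$ via the translation-invariance of $\Phi_\alpha(r,\cdot)$ along $V_0$ (inherited from the affineness of $\phi$ along $V_0$) and extracting a non-degeneracy of $u^\star$ transverse to $V_0$ from strict convexity of $\phi$ in direction $y\in V_0^\perp$ is the delicate technical point; carried through carefully, it forces $\alpha_0=\alpha_1$ a.e.\ on $[s,\tau)$, contradicting the definition of $\tau$ and yielding~\eqref{e.ineq_strict_convex}.
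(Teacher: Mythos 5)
Your convexity chain via Proposition~\ref{p.var}, the convexity of $\phi$, and linearity of $L^{s,1}$ in $\alpha$ matches the paper's opening, and the fusion $u^\star_{\alpha_\lambda}=u^\star_{\alpha_0}=u^\star_{\alpha_1}$ from uniqueness of the maximizer (Lemma~\ref{l.unique_maximizer}) is exactly the paper's mechanism. The martingale representation $\d u^\star_\alpha=\nabla^2\Phi_\alpha(r,X_\alpha(r))\sqrt{\gamma}\,\d B$ and the consequent identity $\nabla^2\Phi_{\alpha_0}(r,X_{\alpha_0}(r))=\nabla^2\Phi_{\alpha_\lambda}(r,X_{\alpha_\lambda}(r))$ (by It\^o isometry and $\gamma\in\S^\D_{++}$) are also exactly the paper's steps. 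Your piecewise subdivision of $[s,1]$ to make Lemma~\ref{l.unique_maximizer} applicable is a variant of the paper's device of picking a $\tau'$ near $\tau$ with $\alpha_0(\tau')>0$ and $\int_{\tau'}^\tau\alpha_0\tr\gamma<C^{-1}$ and then propagating backwards; both are workable, though the paper's version handles the degenerate case $\alpha(s)=0$ more cleanly.

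The genuine gap is in the closing contradiction. After matching Hessians along the two paths, the paper does \emph{not} appeal to $A_1-A_0\in V_0$ or to the affineness of $\Phi$ along $V_0$. Instead it invokes the \emph{second} identity of Lemma~\ref{l.deriv_Phi}: the drift of $r\mapsto\Phi_{kl}(r,X(r))$ is $-\alpha\sum_{i,j}\gamma_{ij}\Phi_{ik}\Phi_{jl}\,\d r$, so taking expectations for $\Phi^0$ and $\Phi^\lambda$ (which are now equal as processes) and using that the integrand $\gamma\cdot(\nabla^2\Phi^0 y)(\nabla^2\Phi^0 y)^\intercal$ is a.e.\ strictly positive (from Lemma~\ref{l.Phi_convex}~\eqref{i.d^2Phi>0} and $\gamma\in\S^\D_{++}$) forces $\alpha_0=\alpha_\lambda$ on $[s,t]$, contradicting the definition of $\tau$. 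Your route instead tries to extract a contradiction from $A_1-A_0\in V_0$ and the inherited $V_0$-affineness of $\Phi_\alpha(r,\cdot)$. Two problems: (i) the inference that $X_{\alpha_\lambda}(r)-X_{\alpha_0}(r)\in V_0$ for a.e.\ $r\in[s,1]$ does not follow from the terminal constraint $A_1-A_0\in V_0$ alone, and (ii) you explicitly defer the closing step as a ``delicate technical point'' without carrying it out, so the argument is not completed. The key missing ingredient is precisely the drift identity for $\nabla^2\Phi$ along the SDE, which is where the explicit $\alpha$-dependence reappears and which the paper exploits to force $\alpha_0=\alpha_\lambda$.
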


\begin{proof}
\textit{Step~1}. 
We choose $\tau'\in(0,\tau)$ as follows.
If $\int_s^\tau \alpha_0(r)\d r = \int_s^\tau \alpha_1(r) \d r=0$ for some $s\in(0,\tau)$, then we must have that $\alpha_0 = \alpha_1 =0$ on $[s,\tau)$ since $\alpha_0$ and $\alpha_1$ are increasing and nonnegative. But this contradicts the definition of $\tau$. Hence, fixing any $s\in (0,\tau]$, without loss of generality, we can assume $\int_s^\tau \alpha_0(r)\d r>0$. Since $\alpha_0$ is increasing, we can choose $\tau'\in(s,\tau]$ sufficiently close to $\tau$ so that
\begin{align}\label{e.choice_tau'}
    \alpha_0(\tau') >0,\qquad \int_{\tau'}^\tau \alpha_0(r)\tr\Ll(\gamma(r)\Rr)\d r<C^{-1},
\end{align}
for $C$ in Lemma~\ref{l.unique_maximizer}.

In the remaining two steps, we will first verify~\eqref{e.ineq_strict_convex} for all $s\in[\tau',\tau)$ and then for $s\in[0,\tau')$. Henceforth, for $\lambda\in[0,1]$, we write $\Phi^\lambda = \Phi_{\alpha_\lambda}$, $u^\star_\lambda = u^\star_{\alpha_\lambda}$ given in~\eqref{e.def_u}, and $X_\lambda = X_{\alpha_\lambda}$ given in~\eqref{e.SDE} with initial condition $x_\lambda$.

\textit{Step~2}. 
We argue by contradiction and suppose
\begin{align}\label{e.contrad_assump}
    \exists s\in [\tau',\tau),\,\lambda \in(0,1):\quad \Phi^\lambda(s,x_\lambda) = (1-\lambda)\Phi^0(s,x_0)+ \lambda \Phi^1 (s,x_1).
\end{align}
In this step, we set $t=\tau$.

Recall definitions in~\eqref{e.F=},~\eqref{e.C=}, and~\eqref{e.L=}. We have
\begin{align}\label{e.linear_L_lambda}
    L^{s,t}_{\alpha_\lambda} (u^\star_\lambda) = (1-\lambda)L^{s,t}_{\alpha_0}(u^\star_\lambda)+ \lambda L^{s,t}_{\alpha_1}(u^\star_\lambda).
\end{align}
This along with Lemma~\ref{l.Phi_convex_alpha} implies 
\begin{align*}
    F^{s,t}_{\alpha_\lambda}(u^\star_\lambda,x_\lambda) \leq (1-\lambda) F^{s,t}_{\alpha_0}(u^\star_\lambda,x_0)+\lambda F^{s,t}_{\alpha_1}(u^\star_\lambda,x_1).
\end{align*}
Using this, ~\eqref{e.contrad_assump}, and relations due to Proposition~\ref{p.var}
\begin{align*}
    F^{s,t}_{\alpha_\lambda}(u^\star_\lambda,x_\lambda)=\Phi^\lambda(s,x_\lambda),\qquad 
    F^{s,t}_{\alpha_0}(u^\star_\lambda,x_0)\leq\Phi^0(s,x_0),\qquad 
    F^{s,t}_{\alpha_1}(u^\star_\lambda,x_1)\leq\Phi^1(s,x_1),
\end{align*}
we get 
\begin{align*}
    F^{s,t}_{\alpha_0}(u^\star_\lambda,x_0)=\Phi^0(s,x_0),\qquad 
    F^{s,t}_{\alpha_1}(u^\star_\lambda,x_1)=\Phi^1(s,x_1),
\end{align*}
and thus $u^\star_\lambda$ is a maximizer for $\Phi^0(s,x_0)$ and $\Phi^1(s,x_1)$.
Since $\alpha$ is increasing and nonnegative, \eqref{e.choice_tau'} holds with $s$ substituted for $\tau'$ therein.
This along with Lemma~\ref{l.unique_maximizer} implies that $u^\star_0$ is the unique maximizer of $\Phi^0(s,x_0)$ and thus $u^\star_0 = u^\star_\lambda$.
Hence, by~\eqref{e.def_u},
\begin{align*}\nabla\Phi^0(x,X_0(r)) = u^\star_0(r) = u^\star_\lambda(r) = \nabla\Phi^\lambda(x,X_\lambda(r)),\quad\forall r \in [s,t].
\end{align*}
In the remaining of this step, we write $\Phi^0 = \Phi^0(\cdot,X_0(\cdot))$ and similarly for $\Phi^\lambda$.
The above display along with Lemma~\ref{l.deriv_Phi} yields
\begin{align*}
    \int_s^t \nabla^2 \Phi^0 \sqrt{\gamma} \d B & = \int_s^t \nabla^2 \Phi^\lambda \sqrt{\gamma} \d B.
\end{align*}
The It\^o isometry implies 
\begin{align*}
    \E\int_s^t \Ll|\nabla^2 \Ll(\Phi^0-\Phi^\lambda\Rr)\sqrt{\gamma}\Rr|^2 \d r =0.
\end{align*}
Due to the assumption~\eqref{e.gamma>0} and the continuity of $r\mapsto \Phi^0$ and $r\mapsto\Phi^\lambda$, we get 
\begin{align}\label{e.nabla^2Phi^0=nabla^2Phi^lambda}
    \nabla^2\Phi^0 = \nabla^2 \Phi^\lambda.
\end{align}
Recall the notation in~\eqref{e.f_iiii=...}.
Using this and Lemma~\ref{l.deriv_Phi}, we have, for $s\leq a\leq b\leq t$ and $k,l\in\{1,\dots,\D\}$,
\begin{gather*}
     -\int_a^b \alpha_0 \E \Ll[ \sum_{i,j=1}^\D \gamma_{ij}\Phi^0_{ik}\Phi^0_{jl}\Rr] \d r  = \E \Ll[ - \int_a^b\sum_{i,j=1}^\D \Ll(-\alpha_0 \gamma_{ij}\Phi^0_{ik}\Phi^0_{jl}\d r + \Phi^0_{ikl}\Ll(\sqrt{\gamma}\Rr)_{ij}\d B_j\Rr)\Rr]
     \\
      = \E \Ll[\Phi^0_{kl}\Ll(b,X_0(b)\Rr) - \Phi^0_{kl}\Ll(a,X_0(a)\Rr)\Rr] 
     = \E \Ll[\Phi^\lambda_{kl}\Ll(b,X_\lambda(b)\Rr) - \Phi^\lambda_{kl}\Ll(a,X_\lambda(a)\Rr)\Rr]
     \\
      = \E \Ll[ - \int_a^b\sum_{i,j=1}^\D \Ll(-\alpha_\lambda \gamma_{ij}\Phi^\lambda_{ik}\Phi^\lambda_{jl}\d r + \Phi^\lambda_{ikl}\Ll(\sqrt{\gamma}\Rr)_{ij}\d B_j\Rr)\Rr]
      \\
      = -\int_a^b \alpha_\lambda \E \Ll[ \sum_{i,j=1}^\D \gamma_{ij}\Phi^\lambda_{ik}\Phi^\lambda_{jl}\Rr] \d r.
\end{gather*}
Let $y$ be given in Lemma~\ref{l.Phi_convex}~\eqref{i.d^2Phi>0} (which requires the assumption~\eqref{e.not_Dirac}).
The above display implies, for $s\leq a\leq b\leq t$,
\begin{align*}
    \int_a^b\alpha_0 \E \Ll[\gamma\cdot \Ll(\nabla^2\Phi^0y\Rr)\Ll(\nabla^2\Phi^0y\Rr)^\intercal\Rr]\d r = \int_a^b\alpha_\lambda \E \Ll[\gamma\cdot \Ll(\nabla^2\Phi^\lambda y\Rr)\Ll(\nabla^2\Phi^\lambda y\Rr)^\intercal\Rr]\d r.
\end{align*}

By Lemma~\ref{l.Phi_convex}~\eqref{i.d^2Phi>0}, $\nabla^2\Phi^0 y$ is a nonzero vector. Due to $\gamma\in \S^\D_{++}0$ a.e.\ assumed in~\eqref{e.gamma>0}, we have $\gamma\cdot \Ll(\nabla^2\Phi^0y\Rr)\Ll(\nabla^2\Phi^0y\Rr)^\intercal>0$ a.e.\ on $[0,1]$ for almost every realization. Using this and~\eqref{e.nabla^2Phi^0=nabla^2Phi^lambda} again, we have that if $\alpha_0(a) \neq \alpha_\lambda(a)$ for some $a\in[s,t)$, then
\begin{align*}
    \int_a^b\alpha_0 \E \Ll[\gamma\cdot \Ll(\nabla^2\Phi^0y\Rr)\Ll(\nabla^2\Phi^0y\Rr)^\intercal\Rr]\d r \neq \int_a^b\alpha_\lambda \E \Ll[\gamma\cdot \Ll(\nabla^2\Phi^\lambda y\Rr)\Ll(\nabla^2\Phi^\lambda y\Rr)^\intercal\Rr]\d r.
\end{align*}
for $b$ sufficiently close to $a$, which contradicts the previous display. Hence, we must have $\alpha_0 = \alpha_1$ on $[s,t]$ but this contradicts the definition of $\tau$ (recall $t=\tau$ in this step). Therefore, \eqref{e.contrad_assump} is invalid and the inequality~\eqref{e.ineq_strict_convex} holds for all $s\in[\tau',\tau)$.

\textit{Step~3}. 
It remains to verify~\eqref{e.ineq_strict_convex} for $s\in[0,\tau')$. Henceforth, we set $t=\tau'$. By~\eqref{e.ineq_strict_convex} at $t$, we have
\begin{align*}
    \Phi^\lambda(t,(1-\lambda)y_0 +\lambda y_1) < (1-\lambda) \Phi^0(t,y_0)+\lambda \Phi^1(t,y_1)
\end{align*}
for any $y_0,\,y_1\in\R^\D$.
This implies
\begin{align*}
    C^{s,t}_{\alpha_\lambda}(u^\star_\lambda,x_\lambda) < (1-\lambda)C^{s,t}_{\alpha_0}(u^\star_\lambda,x_0) + 
    \lambda C^{s,t}_{\alpha_1}(u^\star_\lambda,x_1).
\end{align*}
We still have the relation~\eqref{e.linear_L_lambda}, which together with the above display implies
\begin{align*}
    F^{s,t}_{\alpha_\lambda}(u^\star_\lambda,x_\lambda) < (1-\lambda)F^{s,t}_{\alpha_0}(u^\star_\lambda,x_0) + 
    \lambda F^{s,t}_{\alpha_1}(u^\star_\lambda,x_1).
\end{align*}
Therefore, by Proposition~\ref{p.var},
\begin{align*}
    \Phi^\lambda(s,x_\lambda) = F^{s,t}_\lambda(u^\star_\lambda,x_\lambda) < (1-\lambda)\Phi^0(s,x_0) + 
    \lambda \Phi^1(s,x_1),
\end{align*}
completing the proof.
\end{proof}

Lastly, we comment that the argument by Jagannath and Tobasco in \cite{jagtob} (see the proof of \cite[Theorem~20]{jagtob}) to show the strict convexity may need $\phi$ to be strictly convex everywhere (to verify \cite[Lemma~21]{jagtob}). The approach by Auffinger and Chen allows the weaker condition that there is some $y\in\R^\D$ such that $y\cdot \nabla^2\phi(x)y>0$ for all $x\in\R^\D$, which holds due to Lemma~\ref{l.Phi_convex}~\eqref{i.d^2Phi>0}.

\section{Proofs of main results}\label{s.pf_main}

We start by defining the Parisi functional appearing in~\eqref{e.parisi}.
Recall the Ruelle probability cascade measure $\mathfrak{R}$ described above~\eqref{e.w^pi} and the Gaussian process $(w^\pi(\brho))_{\brho\in\supp\mathfrak{R}}$ in~\eqref{e.w^pi}.
Recall $\Pi$, $\Pi(z)$, $\Pi_\mathrm{Lip}(z)$, and $\Pi_\mathrm{reg}(z)$ in~\eqref{e.Pi},~\eqref{e.Pi(z)},~\eqref{e.Pi_Lip(z)}, and~\eqref{e.Pi_reg}.
For $\pi\in\Pi$ and $h\in\S^\D$, we define
\begin{align*}\sP(\pi)= \E \log\iint \exp\Ll(w^{\nabla\xi\circ\pi}(\alpha)\cdot \sigma - \frac{1}{2}\nabla\xi\circ\pi(1)\cdot \sigma\sigma^\intercal\Rr)\d P_1(\sigma)\d \mathfrak{R}(\alpha) 
    +\frac{1}{2}\int_0^1\theta(\pi(s))\d s 
\end{align*}
where $\theta$ is in \eqref{e.theta}.
Note that \ref{i.xi_incre} ensures $\nabla\xi\circ \pi \in \Pi$.
In the above, if $\pi\in \Pi(z)$, then $\pi(1)=z$.
We set $\Pi_\d(z) = \{\pi\in\Pi(z)\ \big|\ \text{$\pi$ is a step function}\}$.

Next, we relate $\sP$ to the functional $\sF$ in~\eqref{e.sF}.
As commented in Remark~\ref{r.condition_PDE}, it is only assumed in Section~\ref{s.PDE} that $P_1$ satisfies~\eqref{e.supp_P_1} and that $\Psi$ satisfies~\eqref{e.weak_gamma>0}. It is clear that if $P_1$ satisfies~\eqref{e.supp_P_1}, then so does $\tilde P_1^z$ in~\eqref{e.tildeP_1}. Hence, we can use results in Section~\ref{s.PDE} to relate the two functionals.
\begin{lemma}
\label{l.sF=sP}
Let $P_1$ satisfy~\eqref{e.supp_P_1} and $\Psi\in \Pi_\mathrm{Lip}(z)$ satisfy \eqref{e.weak_gamma>0}. If $\alpha\in\cM_\d$, then $\Psi\circ\alpha^{-1}\in \Pi_\d (z)$ and $\sF(\Psi,\alpha) = \sP\Ll(\Psi\circ \alpha^{-1}\Rr)$.

\end{lemma}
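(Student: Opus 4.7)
Set $\pi = \Psi\circ\alpha^{-1}$. The statement splits into three pieces:
(i) $\pi\in\Pi_\d(z)$;
(ii) the random part of $\sF$ equals the random part of $\sP(\pi)$;
(iii) the deterministic part of $\sF$ equals the deterministic part of $\sP(\pi)$.

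For (i), use the explicit form of $\alpha^{-1}$ in \eqref{e.alpha^-1=}: it is a left-continuous, nondecreasing step function with $\alpha^{-1}(0)=q_0=0$ and $\alpha^{-1}(1)=q_K=1$. Composing with $\Psi\in \Pi_\mathrm{Lip}(z)$ preserves left-continuity and monotonicity (the latter in the sense $\cgeq$ because $\Psi$ is increasing), and gives $\pi(1)=\Psi(1)=z$, so $\pi\in\Pi_\d(z)$.

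For (ii), apply Corollary~\ref{c.EPhi=} with $\tilde P_1^z$ (which satisfies \eqref{e.supp_P_1} since it has the same support as $P_1$) in place of $P_1$, obtaining
\begin{align*}
\E\Ll[\Phi_{\tilde P_1^z,\Psi,\alpha}\Ll(0,\sqrt{\mu(0)}\eta\Rr)\Rr]=\E\log\iint \exp\Ll(\sigma\cdot w^{\mu\circ\zeta}(\brho)\Rr)\d \tilde P_1^z(\sigma)\d\mathfrak{R}(\brho).
\end{align*}
Substituting the definition \eqref{e.tildeP_1} of $\tilde P_1^z$ and using $\mu\circ\zeta=\nabla\xi\circ\pi$ together with $\mu(1)=\nabla\xi(z)=\nabla\xi(\pi(1))$, one recovers exactly the first term of $\sP(\pi)$.

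For (iii), I need the identity
\begin{align*}
    \tfrac{1}{2}\theta(z)-\tfrac{1}{2}\int_0^1\alpha(s)\Psi(s)\cdot\gamma(s)\,\d s=\tfrac{1}{2}\int_0^1\theta(\pi(s))\,\d s.
\end{align*}
The key observation, from $\theta(a)=a\cdot\nabla\xi(a)-\xi(a)$, the chain rule, $\mu=\nabla\xi\circ\Psi$, and $\gamma=\tfrac{\d}{\d s}\mu$, is that
\begin{align*}
    \tfrac{\d}{\d s}\theta(\Psi(s))=\tfrac{\d \Psi}{\d s}\cdot \mu(s)+\Psi(s)\cdot \gamma(s)-\nabla\xi(\Psi(s))\cdot \tfrac{\d \Psi}{\d s}=\Psi(s)\cdot\gamma(s)\quad\text{a.e.}
\end{align*}
Since $\alpha$ has the form~\eqref{e.alpha=} and equals $m_l$ on $[q_l,q_{l+1})$, the left-hand integral telescopes:
\begin{align*}
    \int_0^1\alpha(s)\Psi(s)\cdot\gamma(s)\,\d s=\sum_{l=0}^{K-1}m_l\bigl(\theta(\Psi(q_{l+1}))-\theta(\Psi(q_l))\bigr)=m_{K-1}\theta(z)-m_0\theta(\Psi(0))-\sum_{l=1}^{K-1}(m_l-m_{l-1})\theta(\Psi(q_l)).
\end{align*}
On the other hand, by \eqref{e.alpha^-1=}, $\pi(s)=\Psi(q_l)$ for $s\in(m_{l-1},m_l]$, so
\begin{align*}
    \int_0^1\theta(\pi(s))\,\d s=\sum_{l=0}^K(m_l-m_{l-1})\theta(\Psi(q_l))=m_0\theta(\Psi(0))+\sum_{l=1}^{K-1}(m_l-m_{l-1})\theta(\Psi(q_l))+(1-m_{K-1})\theta(z).
\end{align*}
Adding the two displays verifies the required identity, and combining (i)--(iii) completes the proof.

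The only nonroutine step is spotting the chain-rule identity $\Psi\cdot\gamma=\tfrac{\d}{\d s}\theta\circ\Psi$; everything else is bookkeeping, and there is no real analytic obstacle since $\alpha$ is a step function and $\Psi$ is Lipschitz.
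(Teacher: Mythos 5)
Your proof is correct and takes essentially the same approach as the paper: both hinge on spotting the chain-rule identity $\tfrac{\d}{\d s}\theta(\Psi(s)) = \Psi(s)\cdot\gamma(s)$, apply Corollary~\ref{c.EPhi=} with $\tilde P_1^z$ in place of $P_1$ for the first term, and then match the deterministic terms by an integration by parts. The only cosmetic difference is in the bookkeeping for the deterministic terms, where the paper uses Fubini's theorem together with the change of variables $\int_0^1 \theta(\Psi(r))\,\d\alpha(r)=\int_0^1\theta(\pi(s))\,\d s$ (an argument that would extend to general $\alpha\in\cM$), whereas you exploit the step-function structure directly via Abel summation and an explicit evaluation of $\int_0^1\theta(\pi(s))\,\d s$ --- a discrete version of the same manipulation.
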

\begin{proof}
We assume that $\alpha$ takes the form in~\eqref{e.alpha=} and then $\alpha^{-1}$ is given in~\eqref{e.alpha^-1=}.
Let us write $\pi = \Psi\circ\alpha^{-1}$.
It is clear that $\pi$ is a step function. 
From~\eqref{e.alpha^-1=}, we have $\alpha^{-1}(1)=1$, which along with $\Psi(1)=z$ implies $\pi(1) =z$. Thus, $\pi\in\Pi_\d(z)$. 
Using Corollary~\ref{c.EPhi=} where $\mu = \nabla\xi\circ\Psi$ is the notation introduced in~\eqref{e.mu}, we get
\begin{align*}
    \E\Ll[\Phi_{\tilde P_1^z,\Psi,\alpha}\Ll(0,\sqrt{\nabla\xi\circ\Psi(0)}\eta\Rr)\Rr]=\E\log\iint \exp\Ll(\sigma\cdot w^{\nabla\xi\circ\pi}(\brho)\Rr)\d \tilde P_1^z(\sigma)\d\mathfrak{R}(\brho)
\end{align*}
which is equal to the first term in $\sP(\pi)$ due to $\pi(1)=z$. Then, we verify that the remaining terms in $\sF(\Psi,\alpha)$ match the second term in $\sP(\pi)$.

Recall $\mu$ in~\eqref{e.mu}.
The product rule yields
\begin{align*}
    \frac{\d}{\d s}(\Psi\cdot \mu) = \dot\Psi\cdot \mu + \Psi\cdot \dot\mu =  \frac{\d}{\d s}(\xi\circ\Psi) + \Psi\cdot \gamma.
\end{align*}
Recall the definition of $\theta$ in~\eqref{e.theta}, and we have $\theta\circ\Psi=\Psi\cdot \mu - \xi\circ\Psi$.
Using these, we can compute
\begin{align*}
    \int_0^1 \alpha(s)\Psi(s)\cdot \gamma(s) \d s & = \int_0^1 \Psi(s)\cdot \gamma(s)\int_0^s \d \alpha(r)\d s = \int_0^1\int_r^1 \Psi(s)\cdot \gamma(s)\d s \d \alpha(r)
    \\
    &=\int_0^1\int_r^1\Ll(\frac{\d}{\d s}\Ll(\Psi(s)\cdot \mu(s)\Rr) -\frac{\d}{\d s}(\xi\circ\Psi)(s)\Rr)\d s \d \alpha(r)
    \\
    &=\int_0^1 \Ll(\Psi(1)\cdot\mu(1)-\xi\circ\Psi(1)-\Psi(r)\cdot\mu(r)+\xi\circ\Psi(r)\Rr)\d \alpha(r)
    \\
    & = \theta(\Psi(1)) - \int_0^1\theta(\Psi(r))\d \alpha(r).
\end{align*}
Since $\Psi(1)=z$, this matches the remaining terms and completes the proof.
\end{proof}

We show that assumptions in Theorems~\ref{t.lim} and~\ref{t.convex} make~\eqref{e.weak_gamma>0} valid.

\begin{lemma}\label{l.weak->gamma>0}
\hfill
\begin{enumerate}
    
    \item \label{i.l.weak->gamma>0_2} If $\xi$ is twice continuously differentiable and satisfies \eqref{e.cond_xi}, then $\Psi$ satisfies \eqref{e.gamma>0} for every $\Psi\in \Pi_\mathrm{reg}(z)$.

    \item \label{i.l.weak->gamma>0_1} If $\Psi\in \Pi_\mathrm{Lip}(z)$ satisfies~\eqref{e.gamma>0}, then $\Psi$ satisfies~\eqref{e.weak_gamma>0}.
\end{enumerate}
\end{lemma}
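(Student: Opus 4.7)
The plan is to prove the two implications separately.

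For the first part, since $\xi$ is $C^2$ the gradient $\nabla\xi$ is $C^1$, and since $\Psi\in\Pi_\mathrm{reg}(z)$ is $C^1$, the composition $\mu=\nabla\xi\circ\Psi$ is $C^1$, so $\gamma=\dot\mu$ exists and is continuous on $[0,1]$. The chain rule together with Schwarz's theorem gives, for every $c\in\R^{\D\times\D}$,
\begin{align*}
    c\cdot\gamma(s)=c\cdot\nabla\bigl(\dot\Psi(s)\cdot\nabla\xi\bigr)(\Psi(s))=\dot\Psi(s)\cdot\nabla\bigl(c\cdot\nabla\xi\bigr)(\Psi(s)),
\end{align*}
since both expressions expand to $\sum_{ijkl}c_{ij}\dot\Psi(s)_{kl}\partial^2_{a_{ij}a_{kl}}\xi(\Psi(s))$, which is symmetric in the roles of $c$ and $\dot\Psi(s)$. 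Consequently \eqref{e.cond_xi} is equivalent to the statement that $b\cdot\nabla(c\cdot\nabla\xi)(a)>0$ whenever $a\in\S^\D_+$, $b\in\S^\D_{++}$ and $c\in\S^\D_+\setminus\{0\}$. Applying this with $a=\Psi(s)\in\S^\D_+$ and $b=\dot\Psi(s)\in\S^\D_{++}$ yields $c\cdot\gamma(s)>0$ for every $c\in\S^\D_+\setminus\{0\}$. Because $\xi(\cdot)=\xi(\cdot^\intercal)$ forces $\nabla\xi$ to send $\S^\D$ into itself, one has $\gamma(s)\in\S^\D$; testing the previous inequality at $c=vv^\intercal$ for arbitrary $v\neq 0$ then gives $v^\intercal\gamma(s)v>0$, so $\gamma(s)\in\S^\D_{++}$ for \emph{every} $s\in[0,1]$, which is stronger than \eqref{e.gamma>0}.

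For the second part, I fix $0\leq s<t\leq 1$ and set $A(r)=\mu(t)-\mu(r)$ for $r\in[0,t]$. The Lipschitz property of $\Psi$ combined with the local Lipschitzness of $\nabla\xi$ from \ref{i.xi_loc_lip} makes $\mu$ Lipschitz, so the fundamental theorem of calculus for absolutely continuous functions gives $A(r)=\int_r^t\gamma(q)\,\d q$. Using the a.e.\ positivity in \eqref{e.gamma>0}, for every nonzero $v\in\R^\D$ and every $r\in[0,t)$,
\begin{align*}
    v^\intercal A(r)v=\int_r^t v^\intercal\gamma(q)v\,\d q>0,
\end{align*}
so $A(r)\in\S^\D_{++}$ throughout $[0,t)$. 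The matrix square root $f:\S^\D_{++}\to\S^\D_{++}$, $X\mapsto\sqrt{X}$, is $C^\infty$: its differential at $X$ is the inverse of the Lyapunov operator $H\mapsto\sqrt{X}H+H\sqrt{X}$, which is invertible on $\S^\D$ because $\sqrt{X}\in\S^\D_{++}$. Since $r\mapsto A(r)$ is differentiable everywhere on $[0,t)$ with derivative $-\gamma(r)$ (the existence everywhere being the first clause of \eqref{e.gamma>0}), the chain rule delivers
\begin{align*}
    \frac{\d}{\d s}\sqrt{A(s)}=-Df(A(s))[\gamma(s)]
\end{align*}
at every $s\in[0,t)$, which is precisely \eqref{e.weak_gamma>0}.

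The main obstacle is the first step: \eqref{e.cond_xi} on its own only guarantees strict positivity of the Hessian form against $c\in\S^\D_{++}$, whereas concluding $\gamma(s)\in\S^\D_{++}$ requires positivity against every $vv^\intercal$ with $v\neq 0$ (which lies in $\S^\D_+\setminus\{0\}$ but typically not in $\S^\D_{++}$). The $C^2$-symmetry of $\nabla^2\xi$ is precisely what lets me interchange the roles of the differentiation direction and the test direction to close this gap.
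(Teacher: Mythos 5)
Your argument is correct, and in both parts it arrives at the same key facts as the paper does; the differences are in presentation rather than substance, though they are worth noting. In part (1), your invocation of Schwarz's theorem is valid but unnecessary: the paper computes $b\cdot\gamma(s)=\tfrac{\d}{\d s}\bigl(b\cdot\nabla\xi(\Psi(s))\bigr)=\dot\Psi(s)\cdot\nabla(b\cdot\nabla\xi)(\Psi(s))$ directly by the chain rule applied to the scalar-valued composition $s\mapsto(b\cdot\nabla\xi)(\Psi(s))$. In that expression $\dot\Psi(s)$ already occupies the $c$-slot of \eqref{e.cond_xi} (which demands $c\in\S^\D_{++}$) and the test matrix $b$ occupies the $b$-slot (where $\S^\D_+\setminus\{0\}$ suffices), so there is no mismatch at all; the ``obstacle'' you describe only arises because you first wrote $c\cdot\gamma(s)=c\cdot\nabla(\dot\Psi(s)\cdot\nabla\xi)(\Psi(s))$, and the Hessian-symmetry step just brings you back to the paper's form. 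In part (2), your packaging is arguably cleaner than the paper's: you cite that $\sqrt{\cdot}$ is $C^\infty$ on $\S^\D_{++}$ (via the inverse function theorem for $Y\mapsto Y^2$, whose differential is the invertible Lyapunov operator $H\mapsto\sqrt{X}H+H\sqrt{X}$), establish $\mu(t)-\mu(s)\in\S^\D_{++}$ for $s<t$ from \eqref{e.gamma>0} and absolute continuity of $\mu$, and apply the chain rule. The paper reaches the same conclusion more laboriously by re-deriving the two ingredients your citation encapsulates — a local Lipschitz estimate for $r\mapsto\sqrt{\mu(t)-\mu(r)}$ and the existence of the directional derivative of $\sqrt{\cdot}$ at a positive definite matrix — and then assembling them with the first-order expansion $\mu(t)-\mu(s+\eps)=\mu(t)-\mu(s)-\eps\gamma(s)+o(\eps)$. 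Both routes use the same two inputs (strict positivity of $\mu(t)-\mu(s)$ and existence of $\gamma(s)$), so the gain of your version is conciseness, while the paper's is self-containedness.
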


\begin{proof}
Part~\eqref{i.l.weak->gamma>0_2}.
Let $\xi$ satisfy the conditions. 
For every $b\in\S^\D_+\setminus\{0\}$, we can compute
\begin{align*}
    b\cdot \gamma(s) = \frac{\d}{\d s}\Ll(b\cdot \nabla\xi(\Psi(s))\Rr) = \dot\Psi(s)\cdot \nabla \Ll(b\cdot\nabla\xi\Rr) \Ll(\Psi(s)\Rr) >0
\end{align*}
where in the last inequality we used~\eqref{e.cond_xi} and $\dot\Psi(s)\in\S^\D_{++}$ due to $\Psi\in \Pi_\mathrm{reg}(z)$. The above display implies (by diagonalization) $\gamma(s)\in \S^\D_{++}$ for every $s$ and thus \eqref{e.gamma>0}.

Part~\eqref{i.l.weak->gamma>0_1}.
Let $\Psi$ satisfy \eqref{e.gamma>0}.
Write $\bar \mu(s)=\mu(t)-\mu(s)$.
Due to~\eqref{e.gamma>0} and $\dot\mu =\gamma$, there is $c>0$ such that $\bar\mu(r)\geq c\identity$ for $r$ sufficiently close to $s$. Using this and the boundedness of $\gamma$ (due to \ref{i.xi_loc_lip} and $\Psi\in\Pi_\mathrm{Lip}(z)$), we have
\begin{align*}
    \Ll|\sqrt{\bar\mu(r)}-\sqrt{\bar\mu(r')}\Rr| = \Ll|\Ll(\sqrt{\bar\mu(r)}-\sqrt{\bar\mu(r')}\Rr)\Ll(\sqrt{\bar\mu(r)}+\sqrt{\bar\mu(r')}\Rr)\Ll(\sqrt{\bar\mu(r)}+\sqrt{\bar\mu(r')}\Rr)^{-1}\Rr|
    \\
    = \Ll|\Ll(\bar\mu(r)-\bar\mu(r')\Rr)\Ll(\sqrt{\bar\mu(r)}+\sqrt{\bar\mu(r')}\Rr)^{-1}\Rr| \leq C|r-r'|
\end{align*}
for some constant $C$ and for $r,\,r'$ sufficiently close to $s$.
For $h\in \S^\D_{++}$ and $a \in \S^\D$,
\begin{align*}
    \mathrm{D}_{\sqrt{h}}(a) = \lim_{\eps\to0}\eps^{-1}\Ll(\sqrt{h+\eps a}-\sqrt{h}\Rr)
\end{align*}
exists.
Using these and $\bar\mu(s+\eps) = \bar\mu(s)-\eps\gamma(s)+o(\eps)$, we get
\begin{align*}
    \sqrt{\bar\mu(s+\eps)}-\sqrt{\bar\mu(s)} = \sqrt{\bar\mu(s)-\eps\gamma(s)+o(\eps)}-\sqrt{\bar\mu(s)} = \sqrt{\bar\mu(s)-\eps\gamma(s)}-\sqrt{\bar\mu(s)} + o(\eps)
    \\
    = \eps \mathrm{D}_{\sqrt{\bar\mu(s)}}\Ll(-\gamma(s)\Rr)+o(\eps).
\end{align*}
Hence, $\sqrt{\bar\mu(s)}$ is differentiable and $\Psi$ satisfies \eqref{e.weak_gamma>0}.
\end{proof}

We want to show a key identity to prove Theorem~\ref{t.lim}. We need two lemmas as preparation.
We generalize the definition of $\Pi(z)$ in \eqref{e.Pi(z)} by setting $\Pi(a) = \{\pi\in\Pi:\pi(1)=a\}$ for $a\in\S^\D_+$.

We recall the Lipschitzness of $\pi\mapsto \sP(\pi)$ (e.g.\ see \cite[Lemma~3.4]{chen2023self} with $x=0$ therein).

\begin{lemma}\label{l.Lip_P(pi,x)}
For every $R>0$, there is a constant $C>0$ such that
\begin{align*}
    \Ll|\sP(\pi)-\sP(\pi')\Rr|\leq C\int_0^1 \Ll|\pi(s)-\pi'(s)\Rr| \d s,\quad\forall \pi,\,\pi'\in \bigcup_{a\in\S^\D_+:\:|a|\leq R}\Pi(a).
\end{align*}
\end{lemma}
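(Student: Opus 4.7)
The strategy is to bound the two summands in $\sP$ separately. Write $\sP(\pi)=\mathcal{E}(\pi)+\tfrac12\int_0^1 \theta(\pi(s))\d s$, where $\mathcal{E}(\pi)$ denotes the Gibbs $\E\log$ term. First observe that for $\pi\in\Pi(a)$ with $|a|\le R$, monotonicity in the Loewner order gives $0\cleq\pi(s)\cleq\pi(1)=a$, hence $|\pi(s)|\le|a|\le R$ for every $s$. So we work on the compact set $K_R=\{a\in\S^\D_+:|a|\le R\}$, on which $\xi$ is $C^1$ and $\nabla\xi$ is Lipschitz by \ref{i.xi_loc_lip}. Hence $\theta(a)=a\cdot\nabla\xi(a)-\xi(a)$ is Lipschitz on $K_R$, and this bounds the bulk part of $\sP(\pi)-\sP(\pi')$ by $C_R\int_0^1|\pi(s)-\pi'(s)|\d s$.

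For $\mathcal{E}(\pi)-\mathcal{E}(\pi')$ I would follow the Gaussian interpolation scheme behind Lemma~\ref{l.Lipschitz_pi}, now also interpolating the self-overlap correction. Set $\tau=\nabla\xi\circ\pi$ and $\tau'=\nabla\xi\circ\pi'$, which both lie in $\Pi$ by \ref{i.xi_incre}. Let $w^\tau,w^{\tau'}$ be two independent Gaussian processes over the same RPC $\mathfrak{R}$, with covariances $\tau(\brho\cdot\brho')$ and $\tau'(\brho\cdot\brho')$ as in \eqref{e.w^pi}. For $t\in[0,1]$, define
\[ \varphi(t)=\E\log\iint \exp\Ll(\sqrt{1-t}\,w^{\tau}(\brho)\cdot\sigma+\sqrt{t}\,w^{\tau'}(\brho)\cdot\sigma-\tfrac12\Ll((1-t)\tau(1)+t\tau'(1)\Rr)\cdot\sigma\sigma^\intercal\Rr)\d P_1(\sigma)\d\mathfrak{R}(\brho), \]
so that $\varphi(0)=\mathcal{E}(\pi)$ and $\varphi(1)=\mathcal{E}(\pi')$. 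Differentiating in $t$ and applying Gaussian integration by parts to the $w^\tau$ and $w^{\tau'}$ terms, the diagonal contributions $\tau(1)\cdot\sigma\sigma^\intercal$ and $\tau'(1)\cdot\sigma\sigma^\intercal$ are exactly cancelled by the derivative of the self-overlap correction, leaving
\[ \varphi'(t)=\tfrac12\,\E\la\Ll(\tau-\tau'\Rr)\Ll(\brho\cdot\brho'\Rr)\cdot\sigma\sigma'^\intercal\ra_t, \]
where $(\sigma',\brho')$ is an independent replica under the Gibbs measure $\la\cdot\ra_t$ associated with $\varphi$.

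To conclude, I use $|\sigma\sigma'^\intercal|=|\sigma||\sigma'|\le1$ from \eqref{e.supp_P_1} together with the invariance of the RPC (\cite[Theorem~4.4]{pan}, \cite[Proposition~4.8]{HJ_critical_pts}), which implies that $\brho\cdot\brho'$ is uniform on $[0,1]$ under $\E\la\cdot\ra_t$. This gives
\[ |\varphi'(t)|\le\tfrac12\int_0^1 |\tau(s)-\tau'(s)|\d s\le\tfrac{L_R}{2}\int_0^1|\pi(s)-\pi'(s)|\d s, \]
where $L_R$ is the Lipschitz constant of $\nabla\xi$ on $K_R$. Integrating from $0$ to $1$ and adding the bulk estimate yields the claim with $C=\tfrac12 L_R+C_R$. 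The only subtle step I expect is the cancellation in the Gaussian IBP: one must check that the quadratic term $-\tfrac12((1-t)\tau(1)+t\tau'(1))\cdot\sigma\sigma^\intercal$ exactly absorbs the diagonal contributions $\sigma^\intercal\tau(1)\sigma$ and $\sigma^\intercal\tau'(1)\sigma$ that IBP produces. This is precisely the role the self-overlap correction plays in the vector-spin Parisi formula, and it is why Lemma~\ref{l.Lipschitz_pi} cannot be cited directly here (it requires $\pi(1)=\pi'(1)$).
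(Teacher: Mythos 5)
Your proof is correct and follows essentially the same approach as the paper. The paper simply cites \cite[Lemma~3.4]{chen2023self} (with $x=0$) to obtain the key inequality
\begin{align*}
\Ll|\sP(\pi)-\sP(\pi')\Rr|\leq \tfrac12\int_0^1 \Ll(\Ll|\nabla\xi\circ\pi(s)-\nabla\xi\circ\pi'(s)\Rr| + \Ll|\theta(\pi(s))-\theta(\pi'(s))\Rr|\Rr)\d s,
\end{align*}
and then concludes by the local Lipschitzness of $\nabla\xi$ and $\theta$ on the bounded set where $|\pi(s)|\leq R$. Your interpolation argument is precisely a re-derivation of that cited inequality: you split off the bulk $\theta$-term trivially, and for the Gibbs term you interpolate both the Gaussian field and the self-overlap correction linearly in the variance, so that the diagonal contribution $\tfrac12(\tau'(1)-\tau(1))\cdot\sigma\sigma^\intercal$ from Gaussian integration by parts is exactly absorbed by $\tfrac{\d}{\d t}\bigl(-\tfrac12((1-t)\tau(1)+t\tau'(1))\bigr)\cdot\sigma\sigma^\intercal$. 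The remaining off-diagonal term is then bounded exactly as in Lemma~\ref{l.Lipschitz_pi} via $|\sigma\sigma'^\intercal|\leq 1$ and the RPC invariance, yielding $\tfrac12\int_0^1|\tau-\tau'|$. So the content is identical; you simply reproduce in full the interpolation that the paper delegates to the external citation. The observation that the self-overlap correction is what enables the argument when $\pi(1)\neq\pi'(1)$, and therefore why Lemma~\ref{l.Lipschitz_pi} cannot be cited directly, is indeed the crux, and you identify it correctly.
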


\begin{proof}
By \cite[Lemma~3.4]{chen2023self} (with $x=0$ therein),
\begin{align*}
    \Ll|\sP(\pi)-\sP(\pi')\Rr|\leq \frac{1}{2}\int_0^1 \Ll(\Ll|\nabla\xi\circ\pi(s)-\nabla\xi\circ\pi'(s)\Rr| + \Ll|\theta(\pi(s))-\theta\Ll(\pi'\Ll(s\Rr)\Rr)\Rr| \Rr) \d s.
\end{align*}
We can verify that there is a constant $c>0$ such that $\sup_{s\in[0,1]}|\pi(s)|\leq c$ for every $\pi\in\cup_{a\in\S^\D_+:\:|a|\leq R}\Pi(a)$. Then, the desired bound follows from the local Lipschitzness of $\nabla\xi$ and $\theta$ due to \ref{i.xi_loc_lip}.
\end{proof}

\begin{lemma}\label{l.infP>infP}
Let $z\in\S^\D_{++}$. As $\eps \searrow 0$, $\inf_{\pi\in\Pi(z)} \sP(\pi) \geq \inf_{\pi\in\Pi(z-\eps\identity_\D)} \sP(\pi) + o_\eps(1)$.
\end{lemma}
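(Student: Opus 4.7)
The plan is to construct, for each $\pi\in\Pi(z)$, a nearby path $\pi^\eps\in\Pi(z-\eps\identity_\D)$ satisfying $\|\pi-\pi^\eps\|_{L^1([0,1])} = O(\eps)$ uniformly in $\pi$, and then invoke the Lipschitz bound of Lemma~\ref{l.Lip_P(pi,x)} to transfer the inequality.

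The crux is to find a congruence transformation that sends $z$ to $z-\eps\identity_\D$. Since $z\in\S^\D_{++}$, I would diagonalize $z = q^\intercal D q$ with orthogonal $q$ and positive diagonal $D$, and set
\begin{align*}
    A = A(\eps) = q^\intercal D^{-1/2}(D-\eps\identity_\D)^{1/2}q \in \S^\D_+,
\end{align*}
which is well-defined and positive definite for $\eps$ sufficiently small. A short calculation (using that the diagonal factors commute) yields $AzA = z-\eps\identity_\D$. Expanding $\sqrt{1-\eps/d_i}$ to first order, one also sees $A = \identity_\D - \tfrac{\eps}{2}z^{-1} + O(\eps^2)$, so $\|A-\identity_\D\| = O(\eps)$ with constant depending only on $z$.

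Now define $\pi^\eps(s) = A\pi(s)A$. The three required properties of $\Pi(z-\eps\identity_\D)$ follow from simple congruence arguments: $A\pi(s)A\cgeq 0$ since conjugation of a PSD matrix by a symmetric matrix is PSD; increasingness follows from $A(\pi(t)-\pi(s))A\cgeq 0$ for $t\geq s$; left-continuity is preserved since $A$ is a fixed linear map; and the endpoint condition is $\pi^\eps(1) = AzA = z-\eps\identity_\D$. Monotonicity of the Frobenius norm under the PSD order gives $|\pi(s)|\leq |z|$, so
\begin{align*}
    |\pi^\eps(s) - \pi(s)| \leq \|A - \identity_\D\|\bigl(\|A\| + 1\bigr)|\pi(s)|
    = O(\eps),
\end{align*}
with constant depending only on $z$. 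This bounds $\|\pi-\pi^\eps\|_{L^1}$ by $O(\eps)$ uniformly in $\pi\in\Pi(z)$.

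Finally, applying Lemma~\ref{l.Lip_P(pi,x)} with $R = |z|+1$ (valid for small $\eps$ since $|z-\eps\identity_\D|\leq R$) gives $|\sP(\pi)-\sP(\pi^\eps)|\leq C\eps$, so $\sP(\pi)\geq \inf_{\pi'\in\Pi(z-\eps\identity_\D)}\sP(\pi') - C\eps$, and taking the infimum over $\pi\in\Pi(z)$ yields the claim with $o_\eps(1) = -C\eps$. The main (and only) nontrivial step is the construction of $A$; everything else is routine verification. The requirement $z\in\S^\D_{++}$ is used precisely to ensure that $D^{-1/2}$ (equivalently $z^{-1/2}$) exists and that $D-\eps\identity_\D$ remains positive for small $\eps$.
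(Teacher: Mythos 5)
Your proof is correct and takes a genuinely different route from the paper's. The paper proves this by citing an external continuity estimate from \cite{chen2023on} (Lemma~3.6 there), which bounds the variation of $\inf_{\Pi(\cdot)}\sP$ as the constraint endpoint moves. You instead give a self-contained argument: construct the congruence $A=q^\intercal D^{-1/2}(D-\eps\identity_\D)^{1/2}q$ (with $z=q^\intercal Dq$), check that $\pi\mapsto A\pi A$ maps $\Pi(z)$ into $\Pi(z-\eps\identity_\D)$, and show the deformation is $O(\eps)$ uniformly on $\Pi(z)$ so that Lemma~\ref{l.Lip_P(pi,x)} applies. Both approaches use $z\in\S^\D_{++}$ in the same essential way (to keep $z-\eps\identity_\D$ positive and to have an $\eps$-independent lower spectral bound). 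Your construction is more explicit and avoids the external citation; the paper's is shorter given the reference is already available. A few small points worth making precise if this were to be written up: the Frobenius-norm monotonicity $0\cleq a\cleq b\Rightarrow|a|\leq|b|$ deserves a line of justification (e.g.\ $\tr(b^2)-\tr(a^2)=\tr\bigl((b-a)(a+b)\bigr)\geq0$ since both factors are PSD), and one should note that the submultiplicativity used ($|XY|\leq\|X\|\,|Y|$ with $\|\cdot\|$ the operator norm) combined with $\|A\|\leq1$ yields the stated uniform bound. These are routine but worth stating.
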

\begin{proof}
This is a consequence of \cite[Lemma~3.6]{chen2023on}. To apply this lemma, we set $y=0$ and substitute $(z-\eps \identity_\D,z)$ for $(z,z')$ therein. Then, we get $\inf_{\Pi(z)} \sP -\inf_{\Pi(z-\eps\identity_\D)} \sP \geq - CK(z-\eps \identity_\D,|\eps\identity_\D|)$ for some constant $C$ and $K$ defined in \cite[(3.8)]{chen2023self}. To show this vanishes as $\eps\to0$, we need $z-\eps \identity_\D \geq \delta\identity_\D$ for some $\delta>0$ and all sufficiently small $\eps$, which is a result of $z\in \S^\D_{++}$.
\end{proof}

Now, we are ready to prove the aforementioned key identity.

\begin{lemma}\label{l.limF=limP}
Let $z\in\S^\D_{++}$, let $P_1$ satisfy~\eqref{e.supp_P_1} and let $\xi$ be twice continuously differentiable and satisfy~\eqref{e.cond_xi}. Then,
\begin{align*}
    \inf_{\Psi\in \Pi_\mathrm{reg}(z)}\inf_{\alpha\in\cM} \mathscr{F}(\Psi,\alpha) = \inf_{\pi\in\Pi(z)}\sP(\pi).
\end{align*}
\end{lemma}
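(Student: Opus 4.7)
The plan is to establish both inequalities by reducing to discrete $\alpha$, where Lemma~\ref{l.sF=sP} identifies $\sF(\Psi,\alpha)$ with $\sP(\Psi\circ\alpha^{-1})$, and combining this identity with the Lipschitz continuities from Corollary~\ref{c.well-defined} and Lemma~\ref{l.Lip_P(pi,x)}.

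For the direction $\geq$, we fix $\Psi\in\Pi_\mathrm{reg}(z)$ and $\alpha\in\cM$. Lemma~\ref{l.weak->gamma>0} ensures that $\Psi$ satisfies~\eqref{e.weak_gamma>0}, so $\sF(\Psi,\alpha)$ is well defined. We pick $(\alpha_n)\subset\cM_\d$ with $\alpha_n\to\alpha$ in $\cM$. Lemma~\ref{l.sF=sP} gives
\begin{align*}
    \sF(\Psi,\alpha_n) = \sP(\Psi\circ\alpha_n^{-1}) \geq \inf_{\pi\in\Pi(z)}\sP(\pi),
\end{align*}
since $\Psi\circ\alpha_n^{-1}\in\Pi_\d(z)\subset\Pi(z)$. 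The map $\alpha\mapsto\sF(\Psi,\alpha)$ is continuous on $\cM$: the PDE term continues by Corollary~\ref{c.well-defined}, the term $\frac{1}{2}\int_0^1 \alpha\,\Psi\cdot\gamma\,\d s$ continues because $\Psi\cdot\gamma$ is bounded, and the remaining summand is independent of $\alpha$. Passing to the limit yields $\sF(\Psi,\alpha)\geq \inf_{\pi\in\Pi(z)}\sP(\pi)$.

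For the reverse direction, we fix $\delta>0$ and pick $\pi\in\Pi(z)$ with $\sP(\pi)<\inf_{\pi\in\Pi(z)}\sP(\pi)+\delta/3$. Since left-continuous increasing paths in $\Pi(z)$ can be approximated in $L^1$ by step functions in $\Pi_\d(z)$ (the standard partition approximation preserves both left-continuity and the value $z$ at $s=1$), there is $\pi'\in\Pi_\d(z)$ of the form $\pi'=\sum_{l=0}^K z_l\mathds{1}_{(m_{l-1},m_l]}$ with $z_0\cleq\cdots\cleq z_K=z$ and $|\sP(\pi)-\sP(\pi')|<\delta/3$ by Lemma~\ref{l.Lip_P(pi,x)}. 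Choosing any $0=q_0<q_1<\cdots<q_K=1$, we let $\Psi^0\in\Pi_\mathrm{Lip}(z)$ be the piecewise linear path interpolating the points $(q_l,z_l)$ and set $\alpha=\sum_{l=0}^K(m_l-m_{l-1})\mathds{1}_{[q_l,\infty)}\in\cM_\d$. Then $\Psi^0\circ\alpha^{-1}=\pi'$ almost everywhere. The main obstacle is to upgrade $\Psi^0$ to an element of $\Pi_\mathrm{reg}(z)$: $\Psi^0$ is only $C^0$ at the breakpoints $q_l$, and its derivative may fail to lie in $\S^\D_{++}$ (for instance if some $z_l=z_{l-1}$). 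We round the corners of $\Psi^0$ on a scale $\delta'>0$ to produce a matrix-increasing map $\tilde\Psi^{\delta'}\in C^1([0,1];\S^\D_+)$ with $\tilde\Psi^{\delta'}(1)=z$ and $\|\tilde\Psi^{\delta'}-\Psi^0\|_\infty=O(\delta')$; then, exploiting $z\in\S^\D_{++}$, we set
\begin{align*}
    \Psi^{\eta,\delta'}(s) = (1-\eta)\tilde\Psi^{\delta'}(s) + \eta\,s\,z,\qquad s\in[0,1].
\end{align*}
This satisfies $\Psi^{\eta,\delta'}(1)=z$ and $\dot\Psi^{\eta,\delta'}\cgeq \eta z\in\S^\D_{++}$, so $\Psi^{\eta,\delta'}\in\Pi_\mathrm{reg}(z)$.

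To conclude, Lemma~\ref{l.sF=sP} (applicable by Lemma~\ref{l.weak->gamma>0} and $\alpha\in\cM_\d$) yields $\sF(\Psi^{\eta,\delta'},\alpha)=\sP(\Psi^{\eta,\delta'}\circ\alpha^{-1})$. Since $\Psi^{\eta,\delta'}\circ\alpha^{-1}\to\pi'$ in $L^1$ as $\eta,\delta'\to 0$ and both paths lie in $\Pi(z)$, Lemma~\ref{l.Lip_P(pi,x)} gives $\sP(\Psi^{\eta,\delta'}\circ\alpha^{-1})\to\sP(\pi')$. Chaining the three approximations, $\sF(\Psi^{\eta,\delta'},\alpha)<\inf_{\pi\in\Pi(z)}\sP(\pi)+\delta$ for all sufficiently small $\eta,\delta'$, which shows $\inf_{\Psi\in\Pi_\mathrm{reg}(z)}\inf_{\alpha\in\cM}\sF(\Psi,\alpha)\leq\inf_{\pi\in\Pi(z)}\sP(\pi)$ and completes the proof.
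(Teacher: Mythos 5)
Your proposal is correct and gives a valid argument, but the reverse inequality $\leq$ is handled by a genuinely different route than the paper's.

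The paper works directly with the (non-discrete) near-optimal $\pi$: it first invokes Lemma~\ref{l.infP>infP} to pass from $\inf_{\Pi(z)}\sP$ to $\inf_{\Pi(z-\eps\identity_\D)}\sP$, mollifies a near-minimizer $\pi_\eps\in\Pi(z-\eps\identity_\D)$ with a one-sided mollifier supported in $(-\infty,0)$ (so the endpoint is preserved), and then adds the linear drift $\eps s\identity_\D$ to land back at $z$ while making the derivative strictly positive. Finally it takes $\Psi = \hat\pi_\eps$, $\alpha(s)=s$, and approximates $\alpha$ by $\cM_\d$ using the matching between $\sF$ and $\sP$. Your proof, by contrast, first discretizes $\pi$ into $\pi'\in\Pi_\d(z)$, factors $\pi'$ through a piecewise-linear $\Psi^0$ and a discrete $\alpha\in\cM_\d$, rounds the corners of $\Psi^0$, and then uses the convex combination $\Psi^{\eta,\delta'} = (1-\eta)\tilde\Psi^{\delta'}+\eta s z$ to achieve $\dot\Psi^{\eta,\delta'}\cgeq\eta z\in\S^\D_{++}$ while automatically keeping the endpoint at $z$. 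This convex-combination device replaces both the endpoint perturbation (Lemma~\ref{l.infP>infP}) and the additive drift $\eps s\identity_\D$, and is arguably cleaner because it avoids citing the comparison between $\inf_{\Pi(z)}\sP$ and $\inf_{\Pi(z-\eps\identity_\D)}\sP$ from an external lemma. The trade-off is that you introduce the extra factorization/corner-rounding step; that step is sound (a quadratic patch on $[q_l-\delta',q_l+\delta']$ produces a $C^1$ path whose derivative is a convex combination of the adjacent psd slopes, hence psd, and the patch fixes the values outside $[q_l-\delta',q_l+\delta']$ so $\tilde\Psi^{\delta'}(1)=z$), but it would deserve a few lines of detail in a final write-up, since "rounding corners" must be shown to preserve matrix monotonicity and not just scalar monotonicity. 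Both approaches rely essentially on $z\in\S^\D_{++}$; the paper uses it to guarantee $z-\eps\identity_\D\cgeq\delta\identity_\D$, and you use it to guarantee $\eta z\in\S^\D_{++}$.
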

\begin{proof}
By Lemma~\ref{l.weak->gamma>0} and Corollary~\ref{c.well-defined}, the Parisi PDE solution $\Phi_{\tilde P_1^z,\Psi,\alpha}$ exists for every $\Psi\in\Pi_\mathrm{reg}(z)$.
We denote the left-hand (resp.\ right-hand) side of the announced identity by $\lhs$ (resp.\ $\rhs$).
Using Lemma~\ref{l.Lipschitz_phi}, Lemma~\ref{l.Lipschitz_pi}, and the local Lipschitzness of $\theta$ (due to \ref{i.xi_loc_lip}), we can replace $\cM$ by $\cM_\d$ and $\Pi(z)$ by $\Pi_\d(z)$, namely,
\begin{align*}
    \lhs = \inf_{\Psi\in \Pi_\mathrm{Lip}(z)}\inf_{\alpha\in\cM_\d} \mathscr{F}(\Psi,\alpha),\qquad \rhs = \inf_{\pi\in\Pi_\d(z)}\sP(\pi).
\end{align*}
For every $\Psi\in\Pi_\mathrm{Lip}(z)$ and $\alpha \in \cM_\d$, we can apply Lemma~\ref{l.sF=sP} to get $\lhs\geq\rhs$. 

Now, we prove the other direction. 
Lemma~\ref{l.infP>infP} gives
\begin{align*}
    \rhs \geq \inf_{\pi\in \Pi(z-\eps\identity_\D)}\sP(\pi) + o_\eps(1)
\end{align*}
as $\eps\to0$. For each $\eps$, we can choose some $\pi_\eps \in \Pi(z-\eps\identity_\D)$ such that $\rhs \geq \sP(\pi_\eps)+o_\eps(1)$. We set $\pi_\eps(s)= \pi_\eps(0)$ for $s<0$ and $\pi_\eps(s)=\pi_\eps(1)$ for $s>1$. Then, we mollify $\pi_\eps$ by setting $\tilde \pi_\eps(s) = \int \varphi(r)\pi_\eps(s-r)\d r$ where $\varphi$ is a real-valued nonnegative smooth function supported on a compact set in $(-\infty,0)$. 
Due to the choice of $\supp \varphi$, we have $\tilde\pi_\eps(1)=\pi_\eps(1)$. Hence, it is easy to see $\tilde \pi_\eps \in \Pi(z-\eps \identity_\D)$ and that $\tilde \pi_\eps$ is continuously differentiable. 
In view of Lemma~\ref{l.Lip_P(pi,x)}, we can choose $\varphi$ (depending on $\eps$) to satisfy $\rhs \geq \sP(\tilde \pi_\eps)+o_\eps(1)$.
Lastly, we set $\hat \pi_\eps(s) = \tilde\pi_\eps + \eps s \identity_\D$. Then, we have $\hat \pi_\eps(1)=z$ and $\frac{\d}{\d s}\hat \pi_\eps(s) \in \S^\D_{++}$ for every $s$. Therefore, $\hat \pi_\eps\in\Pi_\mathrm{reg}(z)$. Moreover, by Lemma~\ref{l.Lip_P(pi,x)}, we have 
\begin{align*}
    \rhs \geq \sP\Ll(\hat\pi_\eps\Rr)+o_\eps(1).
\end{align*}
Now, we take $\Psi=\hat\pi_\eps$, $\alpha(s)=s$ for $s\in[0,1]$, and a sequence $(\alpha_n)_{n\in\N}$ in $\cM_\d$ that satisfies $\alpha_n^{-1}$ converges to $\alpha^{-1}$ uniformly. By Lemmas~\ref{l.Lip_P(pi,x)} and~\ref{l.sF=sP}, we have
\begin{align*}
    \lim_{n\to\infty}\sF(\Psi,\alpha_n) = \sP\Ll(\Psi\circ\alpha^{-1}\Rr) = \sP\Ll(\hat\pi_\eps\Rr).
\end{align*}
The above two displays together imply $\rhs \geq \lhs$, finishing the proof.
\end{proof}

\begin{proof}[Proof of Theorem~\ref{t.lim}]
Recall that the Parisi formula~\eqref{e.parisi} was proved in \cite[Theorem~1.1]{chen2023on} provided that $\xi$ is convex on $\S^\D_+$. Then, Theorem~\ref{t.lim} follows from~\eqref{e.parisi} and Lemma~\ref{l.limF=limP}.
\end{proof}

\begin{proof}[Proof of Theorem~\ref{t.convex}]
In view of the expression of $\sF(\Psi,\alpha)$ in~\eqref{e.sF}, it suffices to show that if $\alpha_0$ and $\alpha_1$ are distinct, then for every $x\in \R^\D$ and every $\lambda\in (0,1)$
\begin{align*}
    \Phi_{\tilde P_1^z,\Psi, (1-\lambda)\alpha_0 + \lambda \alpha_1}(0,x) < (1-\lambda)\Phi_{\tilde P_1^z,\Psi, \alpha_0}(0,x) + \lambda\Phi_{\tilde P_1^z,\Psi, \alpha_1}(0,x).
\end{align*}
Recall that in Section~\ref{s.convex}, we use the notation $\Phi_\alpha = \Phi_{P_1,\Psi,\alpha}$ introduced in~\eqref{e.Psi_alpha=Psi_PPsialpha}. 
Note that the condition~\eqref{e.not_Dirac} for $P_1$ is inherited by $\tilde P_1^z$. 
Applying Proposition~\ref{p.strict_convex} with $\tilde P_1^z$ substituted for $P_1$, we get the strict convexity in the above display and thus part~\eqref{i.Phi_convex} of the theorem.

We identify $\alpha\in\cM$ with the probability measure it represents. Lemma~\ref{l.Lipschitz_phi} implies that $\alpha\mapsto \sF(\Psi,\alpha)$ is continuous in the topology of weak convergence of probability measures. Since $\cM$ consists of probability measures on $[0,1]$, it is tight. Hence, by Prokhorov's theorem, minimizers of $\inf_{\alpha\in\cM} \sF(\Psi,\alpha)$ exist. The uniqueness follows from part~\eqref{i.Phi_convex}. Part~\eqref{i.uniq_minimizer} is thus verified.
\end{proof}

\subsection{One-dimensional case}

\begin{theorem}\label{t.D=1}
Let $\D=1$. Assume that $\xi:\R\to\R$ is twice differentiable on $[0,z]$ and that its second-order derivative $\xi''(s)>0$ a.e.\ on $[0,z]$. Let $\Psi^\star:[0,1]\to [0,\infty)$ be given by $\Psi^\star(s)= zs$. If $\xi$ is convex on $\R_+$, then
\begin{align}\label{e.D=1limF_N}
    \lim_{N\to\infty} F_N = \inf_{\alpha\in\cM} \sF(\Psi^\star,\alpha).
\end{align}
If $z>0$, then $\alpha\mapsto \sF(\Psi^\star,\alpha)$ is strictly convex
and there exists a unique minimizer $\alpha^\star$.
\end{theorem}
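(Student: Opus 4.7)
The plan is to deduce both assertions from results already in the paper. For the identity~\eqref{e.D=1limF_N}, the strategy is to combine the Parisi formula~\eqref{e.parisi} with the observation that in one dimension the composition $\alpha \mapsto \Psi^\star \circ \alpha^{-1} = z\alpha^{-1}$ is essentially a bijection between $\cM$ and $\Pi(z)$. For the strict convexity and uniqueness, the plan is to verify the hypothesis~\eqref{e.gamma>0} for $\Psi^\star$ and invoke Theorem~\ref{t.convex}.

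For~\eqref{e.D=1limF_N}, I would first apply~\eqref{e.parisi}, which is valid since $\xi$ is convex on $\R_+ = \S^1_+$, to get $\lim_N F_N = \inf_{\pi \in \Pi(z)} \sP(\pi)$. By Lemma~\ref{l.sF=sP} applied with $\Psi = \Psi^\star$, one has $\sF(\Psi^\star, \alpha) = \sP(z\alpha^{-1})$ for every $\alpha \in \cM_\d$. Inspection of~\eqref{e.alpha=}--\eqref{e.alpha^-1=} shows that $\alpha \mapsto z\alpha^{-1}$ is a bijection from $\cM_\d$ onto $\{\pi \in \Pi_\d(z) : \pi(0) = 0\}$: the parameters $(m_l, q_l)$ of $\alpha$ correspond to the step function $\pi \in \Pi_\d(z)$ taking value $z q_l$ on $(m_{l-1}, m_l]$. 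The boundary condition $\pi(0) = 0$ is not a genuine restriction on the infimum: for any $\pi \in \Pi(z)$, the truncation $\pi_\eps(s) := \pi(s)\mathds{1}_{s > \eps}$ lies in $\Pi(z)$, satisfies $\pi_\eps(0) = 0$, and converges to $\pi$ in $L^1$, so $\sP(\pi_\eps) \to \sP(\pi)$ by Lemma~\ref{l.Lip_P(pi,x)}. Chaining this with the density of $\cM_\d$ in $\cM$ (and Corollary~\ref{c.well-defined}) and of $\Pi_\d(z)$ in $\Pi(z)$ (and Lemma~\ref{l.Lip_P(pi,x)}) yields
\begin{equation*}
\inf_{\alpha \in \cM} \sF(\Psi^\star, \alpha) \;=\; \inf_{\pi \in \Pi(z)} \sP(\pi) \;=\; \lim_{N \to \infty} F_N.
\end{equation*}

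For the second assertion, assuming $z > 0$, I would check that $\Psi^\star \in \Pi_\mathrm{Lip}(z)$ satisfies~\eqref{e.gamma>0}. In the scalar setting, $\mu(s) = \xi'(zs)$, and since $\xi$ is twice differentiable on $[0, z]$, $\gamma(s) = z\xi''(zs)$ exists for every $s \in [0, 1]$. The change of variables $t = zs$ (using $z > 0$) turns the assumption $\xi''(t) > 0$ a.e.\ on $[0, z]$ into $\gamma(s) > 0$ a.e.\ on $[0, 1]$, which is precisely~\eqref{e.gamma>0}. Since $P_1$ is not Dirac by the standing hypothesis~\eqref{e.not_Dirac}, Theorem~\ref{t.convex} applies and immediately yields the strict convexity of $\alpha \mapsto \sF(\Psi^\star, \alpha)$ and the existence of a unique minimizer $\alpha^\star$.

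The only non-routine point is the boundary adjustment $\pi(0) \to 0$ in the first step, because $\Psi^\star \circ \alpha^{-1}$ always vanishes at $0$ while a general $\pi \in \Pi(z)$ need not. The truncation argument above absorbs this using the Lipschitz continuity of $\sP$; one only has to check that $\pi_\eps$ remains left-continuous, increasing, and pointwise bounded by $z$, which is routine. Everything else is a direct composition of previously proved results, so no serious obstacle is anticipated.
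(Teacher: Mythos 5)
Your proof takes essentially the same approach as the paper's: reduce both sides of~\eqref{e.D=1limF_N} to discrete paths, use Lemma~\ref{l.sF=sP} to convert between $\sF(\Psi^\star,\cdot)$ and $\sP(\cdot)$, note that in dimension one $\alpha\mapsto z\alpha^{-1}$ is the right correspondence, and for the second assertion verify~\eqref{e.gamma>0} directly and invoke Theorem~\ref{t.convex}. A couple of small points, though.

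First, you never treat the case $z=0$, but the theorem does not assume $z>0$ for the first assertion. In that degenerate case $\Psi^\star\equiv 0$, $\gamma\equiv 0$, $\sF(\Psi^\star,\cdot)$ is constant, and $\Pi(0)$ is essentially the singleton $\{0\}$, so~\eqref{e.D=1limF_N} is immediate; the paper handles this case separately and you should as well, since your bijection $\alpha\mapsto z\alpha^{-1}$ degenerates when $z=0$.

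Second, there is a logical ordering issue: Lemma~\ref{l.sF=sP}, which you use at the start, presupposes that $\Phi_{\tilde P_1^z,\Psi^\star,\alpha}$ is well-defined, and hence that $\Psi^\star$ satisfies~\eqref{e.weak_gamma>0}. You only establish~\eqref{e.gamma>0} in your second part. The fix is simple: verify~\eqref{e.gamma>0} first (your direct scalar computation $\gamma(s)=z\xi''(zs)>0$ a.e.\ works), then use Lemma~\ref{l.weak->gamma>0}\eqref{i.l.weak->gamma>0_1} to deduce~\eqref{e.weak_gamma>0}, and then Corollary~\ref{c.well-defined} to guarantee well-definedness. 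Interestingly, this route is actually slightly cleaner than what the paper writes, which appeals to~\eqref{e.cond_xi} and Lemma~\ref{l.weak->gamma>0}\eqref{i.l.weak->gamma>0_2} even though the theorem's stated hypothesis ($\xi''>0$ a.e.\ on $[0,z]$) is weaker than~\eqref{e.cond_xi}; your direct verification avoids that mismatch. Your extra care about the boundary value $\pi(0)=0$ is not strictly necessary (the value of $\pi$ at the single point $s=0$ does not affect $\sP$), but the truncation argument you give via Lemma~\ref{l.Lip_P(pi,x)} is sound.
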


\begin{proof}
If $z=0$, then $\Psi^\star$ is constantly zero and thus condition~\eqref{e.weak_gamma>0} is satisfied. Hence, by Corollary~\ref{c.well-defined}, $\sF(\Psi^\star,\alpha)$ is well-defined. Note that $\sF(\Psi^\star,\alpha)$ is independent of $\alpha$ and $\inf_{\pi\in\Pi(z)}\sP(\pi)= \sP(\pi_0)$ where $\pi_0$ is constantly zero. Due to $\pi_0 = \Psi^\star\circ\alpha^{-1}$ for every $\alpha$, we obtain \eqref{e.D=1limF_N} from Lemma~\ref{l.sF=sP} and the Parisi formula~\eqref{e.parisi} (given that $\xi$ is convex on $\S^1_+=\R_+$). Note that there is no need to find a minimizer in this degenerate case.

Now, we assume $z>0$.
Since $\Psi^\star\in \Pi_\mathrm{reg}(z)$ and $\xi$ satisfies~\eqref{e.cond_xi}, Lemma~\ref{l.weak->gamma>0} implies that \eqref{e.weak_gamma>0} is satisfied. Hence, Corollary~\ref{c.well-defined} ensures that $\sF(\Psi^\star,\alpha)$ is well-defined.
Arguing as in the beginning of the proof of Lemma~\ref{l.limF=limP}, we have
\begin{align*}
    \inf_{\alpha\in\cM} \sF(\Psi^\star,\alpha) = \inf_{\alpha\in\cM_\d } \sF(\Psi^\star,\alpha),\qquad \inf_{\pi\in \Pi(z)}\sP(\pi) = \inf_{\pi\in \Pi_\d(z)}\sP(\pi).
\end{align*}
Lemma~\ref{l.sF=sP} implies $\inf_{\alpha\in\cM_\d } \sF(\Psi^\star,\alpha) \geq \inf_{\pi\in \Pi_\d(z)}\sP(\pi)$. On the other hand, for every $\pi\in \Pi_\d(z)$, we choose $\alpha$ to satisfy $\alpha^{-1} = \frac{1}{z}\pi$. Then, $\Psi\circ\alpha^{-1} = \pi$. From this and Lemma~\ref{l.sF=sP}, we get $\inf_{\alpha\in\cM_\d } \sF(\Psi^\star,\alpha) \leq \inf_{\pi\in \Pi_\d(z)}\sP(\pi)$. Hence, \eqref{e.D=1limF_N} follows from~\eqref{e.parisi} provided that $\xi$ is convex on $\R_+$.

Substituting $\Psi^\star$ for $\Psi$ in the definition of $\mu$ in \eqref{e.mu}, we get $\gamma(s) = z\xi''(zs)$ for $s\in[0,1]$. The assumption on $\xi''$ verifies \eqref{e.gamma>0}. Then, the uniqueness of the minimizer follows from Theorem~\ref{t.convex}.
\end{proof}

Recall the Parisi PDE \eqref{e.Parisi_PDE} in the general case.
Under the setting in Theorem~\ref{t.D=1}, $\Phi_{\tilde P_1^z,\Psi^\star,\alpha}$ in $\sF(\Psi^\star,\alpha)$ solves (in the sense of Definition~\ref{d.sol_pde})
\begin{align*}
    \partial_s \Phi(s,x) + \frac{1}{2}z\xi''(zs)\Ll(\partial^2_x \Phi(s,x) + \alpha(s) \Ll(\partial_x \Phi(s,x)\Rr)^2\Rr) =0,\quad \text{on $[0,1]\times \R$}.
\end{align*}
By Proposition~\ref{p.reg_Phi}~\eqref{i.dsdxPhi_exists}, for continuous $\alpha$, $\Phi_{\tilde P_1^z,\Psi^\star,\alpha}$ is the classical solution of this PDE.

\section{Potts spin glasses}\label{s.Potts}

We describe the setting of the Potts spin glass with symmetric mixed $p$-spin interactions.
Let $D\geq 2$,
let $\basis = \{e_1,\dots,e_\D\}$ be the standard basis of $\R^\D$, and let $P_1$ be the uniform probability measure on $\basis$. 
For $\R^{\D\times N}$-valued spin configuration $\sigma$ sampled from $P_N$, we denote by $\sigma_{k\bullet} = (\sigma_{ki})_{i=1}^N$ its $k$-th row vector, for $k\in\{1,\dots,\D\}$. For $N\in\N$, $k\in\{1,\dots,\D\}$, and  $p\in \N\cap[ 2,\infty)$, we consider
\begin{align*}
    H_{N,p}(\sigma_{k\bullet}) = \frac{1}{N^{\frac{p-1}{2}}}\sum_{i_1,\dots,i_p=1}^N g_{i_1,\dots,i_p}\sigma_{ki_1}\cdots \sigma_{ki_p}
\end{align*}
where $(g_{i_1,\dots,i_p})_{1\leq i_1,\dots,i_p\leq N;\; p\geq 2}$ is a family of independent standard Gaussians.
Given a sequence $(\beta_p)_{p= 2}^\infty$ of nonnegative numbers where at least one of them is positive, we set
\begin{align*}
    H_N(\sigma_{k\bullet}) = \sum_{p=2}^\infty \beta_p H_{N,p}(\sigma_{k\bullet}).
\end{align*}
We assume that $(\beta_p)_{p= 2}^\infty$ decays fast enough so that the above is well-defined. Finally, we define
\begin{align*}
    H_N(\sigma) = \sum_{k=1}^\D H_N(\sigma_{k\bullet}).
\end{align*}
Note that the law of $H_N(\sigma)$ is invariant under any permutation of the labels $\{1,\dots,\D\}$. 
Recall the definition of $\xi$ in~\eqref{e.xi}. Under this setting, we can verify
\begin{align}\label{e.xi_potts}
    \xi(a) = \sum_{k,k'=1}^\D\sum_{p=2}^\infty \beta^2_p a^p_{kk'},\quad\forall a\in\R^{\D\times\D}.
\end{align}
In Section~\ref{s.Psi_potts}, we will explain that the symmetry heuristically implies that $\lim_{N\to \infty} F_N$ is equal to $\inf_{\alpha\in\cM} \sF(\Psi^\star,\alpha)$ for $\Psi^\star$ given by
\begin{align}\label{e.Psi_potts}
    \Psi^\star(s) = \frac{s}{\D}\identity_\D + \frac{1-s}{\D^2}\one_\D,\quad\forall s\in[0,1]
\end{align}
where $\one_\D$ is the $\D\times\D$ matrix with all entries equal to $1$.
Relevantly, it has been shown in \cite[Corollary~1.2]{chen2023self} that $z=\frac{1}{D}\identity_\D$ for $z$ in~\eqref{e.z} and indeed we have $\Psi^\star(1)=z$ as expected.

The goal of this section is to verify the results mentioned in Section~\ref{s.discussion}.

\begin{theorem}\label{t.convex_Potts}
For $P_1$ uniform on $\basis$, $\xi$ in~\eqref{e.xi_potts}, $z=\frac{1}{\D}\identity_\D$, and $\Psi=\Psi^\star$ in~\eqref{e.Psi_potts}, statements in parts~\eqref{i.Phi_convex} and~\eqref{i.uniq_minimizer} of Theorem~\ref{t.convex} hold.
\end{theorem}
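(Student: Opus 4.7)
The plan is to split into the two regimes flagged in Section~\ref{s.discussion}. In both cases, the starting point is the entrywise computation of $\gamma(s) = \frac{\d}{\d s}\nabla\xi(\Psi^\star(s))$: by \eqref{e.xi_potts} the gradient $\nabla\xi$ acts componentwise, and $\Psi^\star(s)$ has constant diagonal $\frac{(\D-1)s+1}{\D^2}$ and constant off-diagonal $\frac{1-s}{\D^2}$, so $\gamma(s) = A(s)\identity_\D + B(s)(\one_\D - \identity_\D)$ for explicit scalars $A(s),B(s)$. Such a matrix has eigenvalues $A(s)-B(s)$ with multiplicity $\D-1$ on $\one^\perp$ and $A(s)+(\D-1)B(s)$ with eigenvector $\one$. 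These are the two quantities that control everything. I will handle the case that some $\beta_p$ is nonzero for $p\geq 3$ by verifying \eqref{e.gamma>0} directly, and the purely quadratic case by exploiting the Potts translation symmetry.

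\textbf{Case 1: $\beta_p > 0$ for some $p\geq 3$.} The eigenvalue $A(s)-B(s)$ is a positive sum of terms for every $s$. The $p=2$ contribution to $A(s)+(\D-1)B(s)$ cancels, leaving
\begin{align*}
A(s)+(\D-1)B(s) = \frac{\D-1}{\D^2}\sum_{p\geq 3} p(p-1)\beta_p^2\Ll[\Ll(\tfrac{(\D-1)s+1}{\D^2}\Rr)^{p-2} - \Ll(\tfrac{1-s}{\D^2}\Rr)^{p-2}\Rr],
\end{align*}
which is strictly positive for $s>0$ since $(\D-1)s+1 > 1-s$ whenever $s>0$ and at least one term in the sum survives. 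Hence $\gamma(s)\in\S^\D_{++}$ for a.e.\ $s\in[0,1]$, so condition \eqref{e.gamma>0} holds; since $\D\geq 2$ forces $P_1$ to be non-Dirac, Theorem~\ref{t.convex} applies without modification.

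\textbf{Case 2: purely quadratic.} Now $\gamma(s) = \frac{2\beta_2^2}{\D}\bigl(\identity_\D - \tfrac{1}{\D}\one\one^\intercal\bigr)$, a constant positive multiple of the projection onto $\one^\perp$, so $\ker\gamma=\R\one$ and \eqref{e.gamma>0} fails. The plan is to exploit the Potts translation symmetry. Because $\nabla\xi(z) = \lambda\identity_\D$ acts as a constant on unit spins, the tilt \eqref{e.tildeP_1} only rescales $P_1$, giving $\phi(x) = \log\sum_k e^{x_k} + \mathrm{const}$ and hence $\phi(x+c\one)=\phi(x)+c$. A one-line induction through the Cole--Hopf iteration of Definition~\ref{d.sol_pde_discrete} propagates this to $\Phi_{\tilde P_1^z,\Psi^\star,\alpha}(s,x+c\one)=\Phi_{\tilde P_1^z,\Psi^\star,\alpha}(s,x)+c$ for $\alpha\in\cM_\d$, and Corollary~\ref{c.well-defined} extends it to all $\alpha\in\cM$. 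Differentiating in $c$ yields the key identities $\nabla\Phi_\alpha\cdot\one = 1$ and $\nabla^2\Phi_\alpha\,\one = 0$ everywhere (with $\Phi_\alpha = \Phi_{\tilde P_1^z,\Psi^\star,\alpha}$).

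Armed with these identities, I revisit Lemma~\ref{l.unique_maximizer} and Proposition~\ref{p.strict_convex}. The guiding observation is that $\gamma\cdot vv^\intercal = \frac{2\beta_2^2}{\D}|v|^2$ for every $v\perp\one$, so every use of the full positive definiteness of $\gamma$ can be replaced by a check that the relevant vector lies in $\one^\perp$. For Lemma~\ref{l.unique_maximizer}: $F^{s,t}_\alpha$ is invariant under $u\mapsto u+c\one$ because $\gamma\one=0$, so I restrict to the natural gauge $u\cdot\one=1$, which contains $u^\star_\alpha$ by $\nabla\Phi_\alpha\cdot\one=1$; differences of gauged $u$'s lie in $\one^\perp$, so the strict concavity argument goes through verbatim on this gauge slice. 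For Proposition~\ref{p.strict_convex}: the equality $\nabla^2\Phi^0\sqrt\gamma = \nabla^2\Phi^\lambda\sqrt\gamma$ upgrades to $\nabla^2\Phi^0 = \nabla^2\Phi^\lambda$ because both Hessians kill $\one$ and agree on $\mathrm{range}(\sqrt\gamma)=\one^\perp$; and for $y$ supplied by Lemma~\ref{l.Phi_convex}~\eqref{i.d^2Phi>0}, the vector $\nabla^2\Phi^0 y$ is automatically orthogonal to $\one$ and nonzero, giving $\gamma\cdot(\nabla^2\Phi^0 y)(\nabla^2\Phi^0 y)^\intercal > 0$ a.e. The main obstacle is this bookkeeping: one must also verify that the SDE \eqref{e.SDE} preserves the symmetry (indeed $\one^\intercal X_\alpha$ is constant in time because $\one^\intercal\gamma = 0$ kills both the drift and the noise), so that uniqueness and convexity become statements on the quotient $\mathcal{D}[s,t]/\R\one$ where the old proofs apply verbatim. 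Once these invariances are tracked consistently, the strict inequality \eqref{e.ineq_strict_convex} follows as in Proposition~\ref{p.strict_convex}, and the uniqueness of the minimizer follows as in the proof of Theorem~\ref{t.convex}.
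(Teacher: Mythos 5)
Your proposal is correct in substance for both regimes, but for the purely quadratic case it takes a genuinely different route from the paper. A quick remark on Case~1: you verify~\eqref{e.gamma>0} by diagonalizing $\gamma(s)$ on $\R\one\oplus\one^\perp$ and checking both eigenvalues are positive for $s\in(0,1]$, whereas the paper (Lemma~\ref{l.cond_ok_potts}) reaches the same conclusion via the Schur product theorem applied to $\Psi(s)^{\circ(p-3)}\circ(\Psi(s)\circ\dot\Psi(s))$. These are two harmless proofs of the same eigenvalue positivity, and your explicit computation that the $p=2$ contribution to the $\one$-eigenvalue vanishes while each $p\geq 3$ term with $\beta_p>0$ contributes strictly positively for $s>0$ is correct.

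For Case~2, the difference is more substantial. You extend the general machinery of Section~\ref{s.convex}: you observe the translation symmetry $\phi(x+c\one)=\phi(x)+c$, propagate it through Definition~\ref{d.sol_pde_discrete} to get $\nabla\Phi_\alpha\cdot\one=1$ and $\nabla^2\Phi_\alpha\,\one=0$, note $\one^\intercal X_\alpha$ is conserved by the SDE, and then rerun Lemma~\ref{l.unique_maximizer} and Proposition~\ref{p.strict_convex} on the gauge slice $u\cdot\one=1$, replacing every use of $\gamma\in\S^\D_{++}$ by the observation that the relevant vector lies in $\one^\perp = (\ker\gamma)^\perp$ where $\gamma$ is strictly positive. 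Each of your replacements checks out: $F^{s,t}_\alpha$ is gauge-invariant since $\gamma\one=0$ kills both the drift term and the quadratic cost; the identity $\nabla^2\Phi^0\sqrt\gamma = \nabla^2\Phi^\lambda\sqrt\gamma$ upgrades to equality of Hessians because both are symmetric, annihilate $\one$, and agree on $\one^\perp$; and $\nabla^2\Phi^0 y$ automatically lands in $\one^\perp$ so its $\gamma$-quadratic form is a positive multiple of its norm. The paper, by contrast, does not re-run Section~\ref{s.convex} at all for this case. Instead, Proposition~\ref{p.Phi_convex_potts} gives a short direct argument in the style of Jagannath--Tobasco: it applies the variational formula once on $[0,1]$, sets $u=u^\star_{\alpha_\lambda}$, and shows the terminal points $Y_0,Y_1$ differ in a $\gamma$-visible direction by computing $\E|\gamma(Y_1-Y_0)|^2$ explicitly through the martingale representation of $u^\star$ (Lemma~\ref{l.deriv_Phi}), using the two structural identities $\gamma^2=\tfrac{2\beta_2^2}{\D}\gamma$ and $\gamma\propto\sum_k v_kv_k^\intercal$ together with $v_k^\intercal\nabla^2\Phi\,v_k>0$ (Lemma~\ref{l.vnabla^2Phiv>0}). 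Your route makes the degeneracy structure (the kernel $\R\one$ and the gauge freedom) explicit and shows that the general convexity machinery degrades gracefully; the paper's route is shorter and self-contained but more opaque about why the obstruction is exactly $\one$. Both buy the same theorem, and both ultimately rest on the same two facts: strict convexity of $\phi$ transverse to $\one$, and $\gamma$ being a positive multiple of the orthogonal projection onto $\one^\perp$.
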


We will prove the theorem in two cases:
\begin{enumerate}
    \item \label{i.potts_case_1} there is $p\geq 3$ such that $\beta_p>0$;
    \item \label{i.potts_case_2} $\beta_2>0$ and $\beta_p=0$ for all $p\geq 3$.
\end{enumerate}

\subsection{Expected Lipschitz path}\label{s.Psi_potts}

We explain~\eqref{e.Psi_potts} only heuristically.
Denote by $\Sym$ the permutation group of $\{1,\dots,\D\}$ consisting of bijections $\ks:\{1,\dots,\D\}\to \{1,\dots,\D\}$.
We have $\sigma\stackrel{\d}{=}\sigma^\ks = \Ll(\sigma_{\ks(k),i}\Rr)_{1\leq k\leq\D,\, 1\leq i\leq N}$ under $P_N$ for every $\ks\in\Sym$.
From~\eqref{e.xi_potts}, we can see $(H_N(\sigma))_{\sigma\in \R^{\D\times N}}\stackrel{\d}{=}(H_N(\sigma^\ks))_{\sigma\in \R^{\D\times N}}$ under $\E$.
Hence, we can see that $R_N = \frac{\sigma\sigma'^\intercal}{N}$ and $R^\ks_N=\frac{\sigma^\ks (\sigma^\ks)'^\intercal }{N}$ have the same distribution under $\E\la\cdot\ra$ where $\la\cdot\ra$ is the Gibbs measure associated with $F_N$. 

Let us assume that $R_N$ (resp.\ $R^\ks_N$) converges in law to some $R_\infty$ (resp.\ $R^\ks_\infty$) as $N\to\infty$. 
Assuming that the limit of the overlap array satisfies the Ghirlanda--Guerra identities, which allows us to apply Panchenko's synchronization (\cite[Theorem~3]{pan.potts} and \cite[Theorem~4]{pan.vec}).
Let $\mathscr{S}:[0,\infty)\to \S^\D_+$ (resp.\ $\mathscr{S}^\ks$) be the synchronization map associated with $R_\infty$ (resp.\ $R^\ks_\infty$) and we have, almost surely,
\begin{align}\label{e.sync}
    R_\infty=\mathscr{S}(\tr(R_\infty)),\qquad R^\ks_\infty=\mathscr{S}^\ks(\tr(R^\ks_\infty)).\end{align}
Since the synchronization map only depends on the distribution of the overlap, we must have $\mathscr{S} =\mathscr{S}^\ks$. 
This along with the observation that $\tr(R_\infty)=\tr(R^\ks_\infty)$ and the second identity in~\eqref{e.sync} implies $R^\ks_\infty = \mathscr{S}(\tr(R_\infty))$. Comparing this with the first equality in~\eqref{e.sync}, we should expect
\begin{align}\label{e.symmetry_S}
    \mathscr{S}_{kk'} \Ll(\kappa^{-1}(s)\Rr)=\mathscr{S}_{\ks(k)\ks(k')}\Ll(\kappa^{-1}(s)\Rr),\quad\forall k,\, k'\in\{1,\dots,\D\},\ s \in [0,1],\ \ks\in\Sym
\end{align}
where $\kappa:\R\to[0,1]$ is the cumulative distribution function of $\tr(R_\infty)$.

Setting $\pi = \mathscr{S}\circ\kappa^{-1}$, we have $R_\infty \stackrel{\d}{=}\pi(U)$ where $U$ is the uniform random variable on $[0,1]$. Hence, $\pi$ describes the limit of the overlap and thus the Parisi formula~\eqref{e.parisi} (when $\xi$ is convex on $\S^\D_+$) is expected to achieve its minimum at $\pi$.

Let us relate $\pi$ to $\Psi^\star$.
Varying $\ks$, we deduce from \eqref{e.symmetry_S} that $\pi_{kk} = \pi_{k'k'}$ and $\pi_{kk'}= \pi_{k''k'''}$ for all distinct $k,k',k'',k'''$. Namely, $\pi$ has the same diagonal entries and the same off-diagonal entries.
Due to $\sum_{k,k'=1}^\D(R_N)_{kk'}=1$ (as a result of the support $P_1$ on $\Sigma$), we get $\sum_{kk'}\pi_{kk'}=1$ and thus $\pi_{kk}+(D-1)\pi_{k'k''}=1$ for distinct $k,k',k''$.
Due $(R_N)_{kk'}\geq 0$ for every $k,k'$, all entries of $\pi$ are nonnegative. By these and $\pi(s)\in\S^\D_+$ for all $s$, the range of $\pi$ must be a subset of the range of $\Psi^\star$. Since both $\pi$ and $\Psi^\star$ are increasing and left-continuous, we can find a left-continuous and increasing map $\varphi:[0,1]\to[0,1]$ such that $\Psi^\star \circ\varphi = \pi$.

Finally, choose $\alpha\in\cM$ to satisfy $\alpha^{-1}=\psi$.
In view of Lemma~\ref{l.sF=sP} and the Parisi formula~\eqref{e.parisi}, we expect $\lim_{N\to\infty} F_N$ to be $\inf_{\alpha\in\cM}\sF(\Psi^\star,\alpha)$ if $\xi$ is convex on $\S^\D_+$.

\subsection{\texorpdfstring{Case~\eqref{i.potts_case_1}}{Case~1}}

Recall $\mu$ defined in \eqref{e.mu}.
\begin{lemma}\label{l.cond_ok_potts}
Let $\xi$ and $\Psi^\star$ be given in~\eqref{e.xi_potts} and~\eqref{e.Psi_potts} respectively. In case~\eqref{i.potts_case_1}, $\Psi^\star$ satisfies~\eqref{e.gamma>0}.
\end{lemma}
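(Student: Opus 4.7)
The plan is to compute $\gamma(s)=\dot\mu(s)$ in closed form, exploiting the symmetric structure of $\Psi^\star$, then diagonalize $\gamma(s)$ and verify that both eigenvalue families are strictly positive. First, I would write $a(s):=\frac{s}{\D}+\frac{1-s}{\D^2}$ and $b(s):=\frac{1-s}{\D^2}$, so that $(\Psi^\star(s))_{kk}=a(s)$ and $(\Psi^\star(s))_{kk'}=b(s)$ for $k\neq k'$. Since $\xi$ in~\eqref{e.xi_potts} is a sum of entry-wise monomials, $(\nabla\xi(x))_{ij}=\sum_{p\geq 2}p\beta_p^2 x_{ij}^{p-1}$, and hence $\mu(s)=A(s)\identity_\D+B(s)(\one_\D-\identity_\D)$, where $A(s):=\sum_p p\beta_p^2 a(s)^{p-1}$ and $B(s):=\sum_p p\beta_p^2 b(s)^{p-1}$. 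Since $a(s),b(s)\in[0,1/\D]$ and $(\beta_p)$ decays fast enough for $H_N$ to be well-defined, term-by-term differentiation of these series is justified by uniform convergence on $[0,1]$; thus $\gamma$ exists everywhere and takes the form $\gamma(s)=(\dot A(s)-\dot B(s))\identity_\D+\dot B(s)\one_\D$, with $a'(s)=(\D-1)/\D^2$ and $b'(s)=-1/\D^2$ entering as the outer factors.

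Any matrix of the form $c\,\identity_\D+d\,\one_\D$ has eigenvalues $c+\D d$ (on the all-ones vector) and $c$ (on its $(\D-1)$-dimensional orthogonal complement), so the eigenvalues of $\gamma(s)$ are
\begin{align*}
\lambda_1(s)&=\dot A(s)+(\D-1)\dot B(s)=\frac{\D-1}{\D^2}\sum_{p\geq 2}p(p-1)\beta_p^2\bigl(a(s)^{p-2}-b(s)^{p-2}\bigr),\\
\lambda_2(s)&=\dot A(s)-\dot B(s)=\frac{1}{\D^2}\sum_{p\geq 2}p(p-1)\beta_p^2\bigl((\D-1)a(s)^{p-2}+b(s)^{p-2}\bigr),
\end{align*}
the latter with multiplicity $\D-1$.

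The delicate step, and exactly the point where the case~\eqref{i.potts_case_1} hypothesis enters, is the positivity of $\lambda_1$: for $p=2$ the bracket $a(s)^0-b(s)^0$ vanishes identically, so quadratic interactions contribute nothing to this eigenvalue. For $p\geq 3$, however, $a(s)-b(s)=s/\D$ together with $b(s)\geq 0$ gives $a(s)^{p-2}>b(s)^{p-2}$ whenever $s>0$, so the existence of some $p_0\geq 3$ with $\beta_{p_0}>0$ forces $\lambda_1(s)>0$ for every $s\in(0,1]$. Positivity of $\lambda_2$ is easier: the same $p_0$ contributes $(\D-1)a(s)^{p_0-2}+b(s)^{p_0-2}$, which never vanishes on $[0,1]$ since $a(s)+(\D-1)b(s)\equiv 1/\D$ prevents $a(s)$ and $b(s)$ from vanishing simultaneously. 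Combining, $\gamma(s)\in\S^\D_{++}$ for every $s\in(0,1]$, which is the almost-everywhere positivity required by~\eqref{e.gamma>0}.
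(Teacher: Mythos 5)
Your proof is correct, but takes a genuinely different route from the paper. The paper writes $\nabla\xi(a)=\sum_p p\beta_p^2\,a^{\circ(p-1)}$ in terms of Hadamard powers, deduces $\gamma(s)=\sum_p p(p-1)\beta_p^2\,\Psi^\star(s)^{\circ(p-2)}\circ\dot\Psi^\star(s)$, then verifies separately that $\Psi^\star(s)\in\S^\D_{++}$ and $\Psi^\star(s)\circ\dot\Psi^\star(s)\in\S^\D_{++}$ for $s\in(0,1]$, and finally invokes the Schur product theorem to conclude that the single term with $\beta_{p_0}>0$, $p_0\geq 3$, is positive definite while the rest are positive semi-definite. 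You instead notice that $\Psi^\star(s)$, $\dot\Psi^\star(s)$, $\mu(s)$, $\gamma(s)$ all live in the commutative subalgebra $\{c\,\identity_\D+d\,\one_\D\}$, whose elements are simultaneously diagonalized by the all-ones vector $w$ and its orthogonal complement, and you read off the two eigenvalues $\lambda_1,\lambda_2$ explicitly. Your route is more elementary (no Schur product theorem needed) and arguably more transparent about where case~\eqref{i.potts_case_1} enters: the $p=2$ term kills $a^{p-2}-b^{p-2}$ identically, so $\lambda_1$, the eigenvalue on the direction $w$, is the one that vanishes without a cubic or higher interaction. This is precisely the same eigenvector $w$ along which the paper later shows, in case~\eqref{i.potts_case_2}, that $\gamma$ degenerates (see~\eqref{e.w}); your calculation makes that degeneracy visible already here. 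The price is that your argument is entirely tied to the symmetric $\identity/\one$ structure, whereas the paper's Schur-product argument would survive some perturbations of $\Psi^\star$ as long as $\Psi$ and $\Psi\circ\dot\Psi$ remain in $\S^\D_{++}$. Both proofs appeal to the same summability/regularity of $(\beta_p)$ to justify termwise differentiation, so no gap there.
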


\begin{proof}
We write $\Psi=\Psi^\star$.
Using~\eqref{e.xi_potts}, we can compute
\begin{align}\label{e.nabla_xi=}
    \nabla\xi(a) = \sum_{p=2}^\infty p\beta^2_p\Ll(a^{p-1}_{kk'}\Rr)_{1\leq k,k'\leq \D} = \sum_{p=2}^\infty p\beta^2_p a^{\circ (p-1)},\quad \forall a \in \R^{\D\times \D},
\end{align}
where $\circ$ denotes the Hadamard product, namely, $a\circ b = (a_{kk'}b_{kk'})_{1\leq k,k'\leq \D}$ for $a,b\in\R^{\D\times \D}$ (not to confuse with the composition of functions).
Then, we can compute
\begin{align}\label{e.gamma=_potts}
    \gamma(s) = \sum_{p=2}^\infty p(p-1)\beta_p^2 \Psi(s)^{\circ(p-2)}\circ \dot\Psi(s),\quad\forall s \in[0,1].
\end{align}
where
\begin{align}\label{e.dotPhi}
    \dot\Psi(s)  = \frac{1}{\D}\identity_\D -\frac{1}{\D^2}\one_\D
\end{align}
which is constant.
Due to $\one_\D\in\S^\D_+$, we have $\Psi(s)\cgeq \frac{s}{\D}\identity_\D$ and thus
\begin{align}\label{e.Psi>0}
    \Psi(s)\in\S^\D_{++},\quad\forall s \in(0,1].
\end{align}
For distinct $k$ and $k'$, we can verify
\begin{align*}
    \Ll(\Psi(s)\circ\dot\Psi(s)\Rr)_{kk'} &= \Psi(s)_{kk'}\circ\dot\Psi(s)_{kk'}=\frac{1-s}{\D^2}\dot\Psi(s)_{kk'},
    \\
    \Ll(\Psi(s)\circ\dot\Psi(s)\Rr)_{kk} &= \Psi(s)_{kk}\circ\dot\Psi(s)_{kk} = \frac{s}{D}\Ll(\frac{1}{\D}-\frac{1}{\D^2}\Rr)+\frac{1-s}{\D^2}\dot\Psi(s)_{kk}.
\end{align*}
Therefore,
\begin{align}\label{e.PsiPsi'=}
    \Psi(s)\circ\dot\Psi(s) = \frac{s(\D-1)}{\D^3}\identity_\D + \frac{1-s}{\D^2}\dot\Psi(s).
\end{align}
Note that by Jensen's inequality,
\begin{align}\label{e.xdotPsix>0}
    x^\intercal \dot\Psi(s) x = \sum_{k=1}^\D\frac{1}{\D}x_k^2 - \Ll(\sum_{k=1}^\D\frac{1}{\D}x_k\Rr)^2 \geq 0,\quad\forall x\in\R^\D,
\end{align}
which implies $\dot\Psi(s)\in\S^\D_+$ for all $s$. Recall $D\geq 2$. Hence,~\eqref{e.PsiPsi'=} yields
\begin{align}\label{e.PsiPsi'>0}
    \Psi(s)\circ\dot\Psi(s)\in\S^\D_{++},\quad\forall s\in(0,1].
\end{align}
Let $p\geq 3$ satisfy $\beta_{p}>0$. Recall that the Schur product theorem \cite[Theorem~7.5.3]{horn2012matrix} states that the Hadamard product of two positive definite (resp.\ semi-definite) matrices is positive definite (resp.\ semi-definite). Using this,~\eqref{e.gamma=_potts},~\eqref{e.Psi>0}, and~\eqref{e.PsiPsi'>0}, we get
\begin{align*}
    \gamma(s)\cgeq p(p-1)\beta_{p}^2\Psi(s)^{\circ(p-3)}\circ\Ll(\Psi(s)\circ\dot\Psi(s)\Rr)\in\S^\D_{++},\quad\forall s\in(0,1],
\end{align*}
which verifies~\eqref{e.gamma>0}.
\end{proof}

\begin{proof}[Proof of Theorem~\ref{t.convex_Potts} in case~\eqref{i.potts_case_1}]
It follows from Lemma~\ref{l.cond_ok_potts} and Theorem~\ref{t.convex}. 
\end{proof}

\subsection{\texorpdfstring{Case~\eqref{i.potts_case_2}}{Case~2}}

We show that~\eqref{e.gamma>0} does not hold in this case. By~\eqref{e.gamma=_potts} and~\eqref{e.dotPhi}, we get
\begin{align}\label{e.gamma=Potts2}
    \gamma(s) = 2\beta^2_2 \dot\Psi^\star(s) = \frac{2\beta^2_2}{\D^2}\Ll(\D\identity_\D - \one_\D\Rr)
\end{align}
which is not positive definite because 
\begin{align}\label{e.w}
    \gamma(s)w =0,\quad \text{for } w = (1,1,\dots,1)\in\R^\D,
\end{align}
due to $\Ll(\D\identity_\D - \one_\D\Rr) w=0$.
So, Theorem~\ref{t.convex} is not applicable.
Despite the lack of~\eqref{e.gamma>0}, we can exploit relations satisfied by $\gamma$ to retain the strict convexity.

As commented in Remark~\ref{r.condition_PDE}, we assume that $\Psi$ satisfies~\eqref{e.weak_gamma>0} instead of~\eqref{e.gamma>0} in Section~\ref{s.PDE}. The next lemma makes the results in Section~\ref{s.PDE} still available. In particular, as a result of Corollary~\ref{c.well-defined}, the Parisi PDE solutions are well-defined for $\Psi^\star$.

\begin{lemma}\label{l.weak_cond_Potts2}
In case~\eqref{i.potts_case_2}, $\Psi^\star$ satisfies~\eqref{e.weak_gamma>0}.
\end{lemma}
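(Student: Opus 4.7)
The plan is to exploit the fact that in case~\eqref{i.potts_case_2}, $\nabla\xi$ is linear, so $\mu$ is affine and $\mu(t)-\mu(s)$ factors as a scalar function of $t-s$ times a fixed positive semi-definite matrix. Taking the square root is then straightforward.

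\textbf{Step 1: explicit formula for $\mu$.} Using~\eqref{e.nabla_xi=} with only the $p=2$ term surviving, we have $\nabla\xi(a)=2\beta_2^2 a$ for all $a\in\R^{\D\times\D}$. Therefore $\mu(s)=\nabla\xi\circ\Psi^\star(s)=2\beta_2^2\Psi^\star(s)$, which is affine in $s$. Differentiating, $\gamma(s)=\dot\mu(s)=2\beta_2^2\dot\Psi^\star(s)$, recovering~\eqref{e.gamma=Potts2}. In particular $\gamma$ is constant in $s$, so
\begin{align*}
    \mu(t)-\mu(s) \;=\; (t-s)\,\gamma(0) \;=\; \frac{2\beta_2^2(t-s)}{\D^2}\,\bigl(\D\identity_\D-\one_\D\bigr).
\end{align*}

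\textbf{Step 2: the fixed matrix is PSD.} The matrix $A:=\D\identity_\D-\one_\D$ satisfies $Aw=0$ for $w=(1,\dots,1)^\intercal$ and $Av=\D v$ for every $v\perp w$, so $A\in\S^\D_+$. Let $\sqrt{A}\in\S^\D_+$ denote its matrix square root. Then for every $0\le s<t\le 1$,
\begin{align*}
    \sqrt{\mu(t)-\mu(s)} \;=\; \frac{\sqrt{2}\,\beta_2}{\D}\sqrt{t-s}\,\sqrt{A}.
\end{align*}

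\textbf{Step 3: differentiability.} The right-hand side is a scalar multiple of the fixed matrix $\sqrt{A}$, with the scalar $\sqrt{t-s}$ being smooth in $s$ on $[0,t)$. Differentiating,
\begin{align*}
    \frac{\d}{\d s}\sqrt{\mu(t)-\mu(s)} \;=\; -\frac{\beta_2}{\D\sqrt{2(t-s)}}\,\sqrt{A}
\end{align*}
which is well-defined for every $s<t$. This verifies~\eqref{e.weak_gamma>0}.

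There is no real obstacle here; the point of the lemma is simply to note that even though $\gamma(s)$ fails to be positive definite (the null direction $w$ of~\eqref{e.w} kills the regularity one would get from~\eqref{e.gamma>0}), the purely quadratic interaction makes $\mu(t)-\mu(s)$ separate as scalar times fixed matrix, so the matrix square root inherits the smoothness of $\sqrt{t-s}$ on $s<t$ without needing any spectral regularity of $\gamma$.
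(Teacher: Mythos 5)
Your proof is correct and follows essentially the same route as the paper: both observe that in the purely quadratic case $\nabla\xi$ is linear, so $\mu(t)-\mu(s)=(t-s)\gamma$ with $\gamma$ a fixed PSD matrix, whence $\sqrt{\mu(t)-\mu(s)}=\sqrt{t-s}\,\sqrt{\gamma}$ is smooth in $s$ on $[0,t)$. The only cosmetic difference is that you diagonalize $\D\identity_\D-\one_\D$ explicitly, while the paper cites the Jensen's-inequality computation~\eqref{e.xdotPsix>0} for the positive semi-definiteness of $\dot\Psi^\star(0)$.
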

\begin{proof}
Recall $\mu$ defined in \eqref{e.mu}. Using \eqref{e.Psi_potts},  \eqref{e.nabla_xi=}, and \eqref{e.dotPhi}, we have $\mu(s) = 2\beta^2_2\Psi^\star(s)$ and $\mu(t)-\mu(s)= 2\beta^2_2(t-s)\dot\Psi^\star(0)$. Since \eqref{e.xdotPsix>0} verifies $\dot\Psi^\star(0)\in\S^\D_+$, we get $\sqrt{\mu(t)-\mu(s)}= \sqrt{2(t-s)}\beta_2\sqrt{\dot\Psi^\star(0)}$ which is differentiable in $s$ if $s<t$.
\end{proof}

Since $\gamma(s)$ is constant in this case, we simply write $\gamma=\gamma(s)$ for all $s\in[0,1]$ henceforth.
Recall that $\{e_1,\dots,e_\D\}$ is the standard basis.
For $w$ in~\eqref{e.w}, we set 
\begin{align}\label{e.v_k}
    v_k = De_k - w,\quad\forall k\in\{1,\dots,\D\}.
\end{align}
Using $\sum_{k=1}^\D e_k = w$, we can compute
\begin{align*}
    \sum_{k=1}^\D v_kv_k^\intercal = \sum_{k=1}^\D \Ll(\D^2 e_ke_k^\intercal - \D e_kw^\intercal - \D we_k^\intercal +ww^\intercal\Rr) = \D^2\identity_\D - \D \one_\D.
\end{align*}
Therefore,~\eqref{e.gamma=Potts2} becomes
\begin{align}\label{e.gamma=sumvv}
    \gamma = \frac{2\beta^2_2}{\D^3}\sum_{k=1}^\D v_kv_k^\intercal.
\end{align}
Using $\one_\D^2 = \D \one_\D$, we also have $\Ll(\D \identity_\D - \one_\D\Rr)^2=\D\Ll(\D \identity_\D - \one_\D\Rr)$, which together with~\eqref{e.gamma=Potts2} implies
\begin{align}\label{e.gamma^2=cgamma}
    \gamma^2 = \frac{2\beta^2_2}{\D}\gamma.
\end{align}
The two identities,~\eqref{e.gamma=sumvv} and~\eqref{e.gamma^2=cgamma}, are crucial to our proof of strict convexity.
We start with some lemmas.

\begin{lemma}\label{l.yphiy>0_potts}
Let $w$ be given in~\eqref{e.w} and let be $\phi$ given in~\eqref{e.phi=initial}. Then, $\phi$ is convex and the following holds at every $x\in\R^\D$: for every $y\in\R^\D$, $y^\intercal\nabla^2\phi(x) y=0$ if and only if $y=r w$ for some $r\in\R$.
\end{lemma}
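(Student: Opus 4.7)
The plan is to reduce the statement to the variance computation already performed in the proof of Lemma~\ref{l.Phi_convex}~\eqref{i.Phi_alpha_convex},~\eqref{i.d^2Phi>0} and then use the specific structure of $P_1$ in the Potts setting. With $P_1$ the uniform measure on $\basis = \{e_1,\dots,e_\D\}$, the initial condition~\eqref{e.phi=initial} becomes
\begin{align*}
    \phi(x) = \log\Ll(\frac{1}{\D}\sum_{k=1}^\D e^{x_k}\Rr),\quad x\in\R^\D,
\end{align*}
and the Gibbs measure $\la\cdot\ra_x$ defined in~\eqref{e.<>_x} is the probability measure on $\basis$ assigning mass proportional to $e^{x_k}$ to $e_k$.

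For convexity and for the direction of the equivalence characterizing the zero directions of the Hessian, the key identity is the one displayed in~\eqref{e.yd^2phiy}: for any $y\in\R^\D$,
\begin{align*}
    y^\intercal \nabla^2 \phi(x) y = \la (\sigma\cdot y - \la \sigma\cdot y\ra_x)^2\ra_x \ge 0.
\end{align*}
Convexity is then immediate. For the equivalence, note that $y^\intercal\nabla^2\phi(x)y=0$ holds if and only if $\sigma\cdot y$ is $\la\cdot\ra_x$-a.s.\ constant on $\supp\la\cdot\ra_x$.

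The next step is to observe that all weights $e^{x_k}$ are strictly positive, so $\supp\la\cdot\ra_x=\basis$ for every $x\in\R^\D$. Consequently $\sigma\cdot y$ being $\la\cdot\ra_x$-a.s.\ constant forces $e_k\cdot y = e_{k'}\cdot y$ for all $k,k'\in\{1,\dots,\D\}$, i.e.\ $y_1=y_2=\dots=y_\D$, which is exactly $y=rw$ for some $r\in\R$. The converse is trivial: if $y=rw$ then $\sigma\cdot y = r$ deterministically on $\basis$, and the variance vanishes. I do not anticipate a real obstacle here; the argument is essentially bookkeeping once~\eqref{e.yd^2phiy} and the strict positivity of the Gibbs weights are invoked.
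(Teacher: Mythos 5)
Your proof is correct and is essentially the same as the paper's: both compute $y^\intercal\nabla^2\phi(x)y$ as the variance of $\sigma\cdot y$ under the Gibbs measure $\la\cdot\ra_x$ and observe that this vanishes iff all coordinates of $y$ agree, the only cosmetic difference being that you cite the general identity~\eqref{e.yd^2phiy} while the paper rewrites it explicitly in the Potts setting and cites Jensen's inequality.
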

\begin{proof}
In this setting, $\phi(x) = \log \sum_{k=1}^\D\frac{1}{\D}e^{x_k}$. For every $y\in\R^\D$, we can compute
\begin{align*}
    y^\intercal\nabla^2\phi(x)y=\frac{\d^2}{\d \eps^2}\phi(x+\eps y)\Big|_{\eps=0} = \frac{\sum_{k=1}^\D y_k^2e^{x_k}}{\sum_{k=1}^\D e^{x_k}} - \Ll(\frac{\sum_{k=1}^\D y_ke^{x_k}}{\sum_{k=1}^\D e^{x_k}}\Rr)^2.
\end{align*}
By Jensen's inequality, the right-hand side is nonnegative. It is zero if and only if $y_k$'s are equal. 
\end{proof}

We need a version of Lemma~\ref{l.Phi_convex}~\eqref{i.d^2Phi>0}.
\begin{lemma}\label{l.vnabla^2Phiv>0}
Let $(v_k)_{1\leq k\leq \D}$ be given in~\eqref{e.v_k}.
For every $\alpha\in\cM$, $(s,x)\in[0,1]\times\R^\D$, and $k\in\{1,\dots,\D\}$, 
\begin{align*}
    v^\intercal_k \nabla^2\Phi_{P_1,\Psi,\alpha}(s,x) v_k >0
\end{align*}
\end{lemma}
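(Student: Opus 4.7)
The plan is to recycle the key integral identity from Lemma~\ref{l.Phi_convex}\eqref{i.d^2Phi>0}. Applying Lemma~\ref{l.deriv_Phi} with $a=s$ and $b=t=1$ to the process $X=X_\alpha$ started at $x$ (so that $\Phi_\alpha(1,X(1)) = \phi(X(1))$), the same It\^o calculation that produced \eqref{e.ynabla^2Phiy>} yields, for any $y\in\R^\D$,
\begin{align*}
    y^\intercal \nabla^2\Phi_{P_1,\Psi,\alpha}(s,x) y
    = \E\Ll[y^\intercal \nabla^2\phi(X(1)) y
    + \int_s^1 \alpha(r)\,\gamma\cdot \bigl(\nabla^2\Phi_\alpha y\bigr)\bigl(\nabla^2\Phi_\alpha y\bigr)^\intercal \d r\Rr].
\end{align*}
Even though $\gamma$ is only positive semidefinite here (it vanishes on $w$), we still have $\gamma\in\S^\D_+$ by~\eqref{e.gamma=sumvv}, so the integrand inside the second expectation is pointwise nonnegative by~\eqref{e.ab>0}.

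Now specialize to $y=v_k = \D e_k - w$. First, $v_k$ is not a scalar multiple of $w$: the identity $\sum_{j=1}^\D e_j = w$ gives $v_k\cdot w = \D - \D = 0$, so if $v_k = rw$ then $r|w|^2 = 0$, hence $r=0$ and $v_k=0$, contradicting $\D\ge 2$. Therefore, by Lemma~\ref{l.yphiy>0_potts}, at every realization of $X(1)\in\R^\D$ we have
\begin{align*}
    v_k^\intercal \nabla^2\phi(X(1)) v_k > 0.
\end{align*}
Taking expectation, the first term on the right of the displayed identity is strictly positive, while the second term is nonnegative. Combining these two, we conclude $v_k^\intercal \nabla^2\Phi_{P_1,\Psi,\alpha}(s,x) v_k > 0$.

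The only potential friction is justifying the displayed identity for general $\alpha\in\cM$ rather than only discrete $\alpha$; but this is already handled in the proof of Lemma~\ref{l.Phi_convex}\eqref{i.d^2Phi>0} via the approximation scheme used throughout Section~\ref{s.convex} (Step~4 of the proof of Proposition~\ref{p.var} combined with Proposition~\ref{p.reg_Phi}), and Lemma~\ref{l.weak_cond_Potts2} ensures $\Phi_{P_1,\Psi^\star,\alpha}$ is well-defined in the present Potts setting. So the argument really is just: reuse the earlier It\^o identity, then replace the appeal to Lemma~\ref{l.Phi_convex}\eqref{i.d^2Phi>0} by Lemma~\ref{l.yphiy>0_potts} applied to the vector $v_k$, noting $v_k\perp w$.
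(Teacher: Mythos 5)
Your proposal is correct and takes essentially the same route as the paper: substitute $y=v_k$ into the It\^o identity \eqref{e.ynabla^2Phiy>} and invoke Lemma~\ref{l.yphiy>0_potts} for strict positivity of the terminal term, with the integral term being nonnegative because $\gamma\in\S^\D_+$. You make explicit the small step that $v_k$ is not a scalar multiple of $w$ (via $v_k\cdot w=0$ and $v_k\neq 0$), which the paper leaves implicit.
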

\begin{proof}
By Lemma~\ref{l.yphiy>0_potts}, we have $v_k^\intercal\nabla^2\phi(x)v_k>0$ for every $x$. Using this and substituting $v_k$ for $y$ in~\eqref{e.ynabla^2Phiy>}, we obtain the desired result.
\end{proof}

\begin{lemma}\label{l.phi(x_lambda)<}
For $x_0,x_1\in\R^\D$, if $|\gamma(x_1-x_0)|>0$, then $\phi(x_\lambda) < (1-\lambda)\phi(x_0) + \lambda\phi(x_1)$ for every $\lambda\in(0,1)$ where $x_\lambda = (1-\lambda)x_0 + \lambda x_1$.
\end{lemma}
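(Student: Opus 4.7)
The plan is to reduce strict convexity along the segment to the strict positivity of the one-dimensional second derivative, and then invoke Lemma~\ref{l.yphiy>0_potts} to convert the hypothesis $|\gamma(x_1-x_0)|>0$ into the non-degeneracy condition $x_1-x_0\notin \R w$ needed there.

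More precisely, I would set $y = x_1-x_0$ and define $f(\lambda) = \phi(x_0+\lambda y)$ for $\lambda\in[0,1]$. Since $\phi$ is smooth, we have
\begin{align*}
    f''(\lambda) = y^\intercal \nabla^2\phi(x_0+\lambda y) y.
\end{align*}
The first step is to translate the hypothesis: by~\eqref{e.gamma=Potts2}, $\gamma y = \frac{2\beta_2^2}{D^2}(D\identity_\D -\one_\D)y$, and the kernel of $D\identity_\D-\one_\D$ equals $\R w$ (since $w$ spans it and the matrix has rank $D-1$). Hence $|\gamma y|>0$ is equivalent to $y\notin \R w$.

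The second step is to apply Lemma~\ref{l.yphiy>0_potts} pointwise: for each $\lambda\in[0,1]$, $y^\intercal \nabla^2\phi(x_0+\lambda y)y = 0$ if and only if $y\in\R w$. Since $y\notin\R w$ by the previous step, $f''(\lambda)>0$ on $[0,1]$. A standard calculus argument (e.g.\ by writing $f(\lambda)-(1-\lambda)f(0)-\lambda f(1) = -\int_0^1 G(\lambda,s) f''(s)\d s$ with the positive Green's kernel $G(\lambda,s)=\min(\lambda,s)-\lambda s$, or equivalently by Taylor's theorem with integral remainder applied twice) then yields
\begin{align*}
    f(\lambda) < (1-\lambda)f(0)+\lambda f(1),\quad\forall \lambda\in(0,1),
\end{align*}
which is the announced inequality.

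There is no real obstacle here; the only thing to check carefully is the kernel computation identifying $\ker\gamma = \R w$, which is immediate from the explicit form~\eqref{e.gamma=Potts2}. Everything else is a direct consequence of Lemma~\ref{l.yphiy>0_potts} applied along the segment $[x_0,x_1]$.
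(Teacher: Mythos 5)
Your proof is correct and follows essentially the same route as the paper: translate $|\gamma(x_1-x_0)|>0$ into $x_1-x_0\notin\R w$, then apply Lemma~\ref{l.yphiy>0_potts} pointwise along the segment to get strict positivity of the second derivative, hence strict convexity. The only cosmetic difference is that you compute $\ker\gamma$ directly from~\eqref{e.gamma=Potts2}, whereas the paper derives $y=rw\Rightarrow\gamma y=0$ from the representation~\eqref{e.gamma=sumvv} and $v_k^\intercal w=0$; both are equivalent one-line computations.
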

\begin{proof}
Set $y=x_1-x_0$. Suppose that $y=r w$ for some $r\in\R $. Then, using $v_k^\intercal w = 0$ and~\eqref{e.gamma=sumvv}, we have $\gamma y =0$ contradicting the assumption. Hence, there does not exist $r\in\R$ such that $y=rw$. Then, by Lemma~\ref{l.yphiy>0_potts},
\begin{align*}
    \frac{\d^2}{\d \lambda^2} \phi(x_\lambda) = y^\intercal\nabla^2\phi(x_\lambda) y >0.
\end{align*}
The desired result thus follows.
\end{proof}

We use an argument similar to that in \cite[Section~4]{jagtob} to prove the strict convexity

\begin{proposition}\label{p.Phi_convex_potts}
Let $\alpha_0,\,\alpha_1\in\cM$ be distinct.
For every $x\in\R^\D$ and $\lambda \in(0,1)$, 
\begin{align*}
    \Phi_{P_1,\Psi^\star,\alpha_\lambda}(0,x) < (1-\lambda)\Phi_{P_1,\Psi^\star,\alpha_0}(0,x) + \lambda\Phi_{P_1,\Psi^\star,\alpha_1}(0,x)
\end{align*}
where $\alpha_\lambda = (1-\lambda)\alpha_0 + \lambda\alpha_1$.
\end{proposition}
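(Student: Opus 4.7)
The strategy is to adapt the proof of Proposition~\ref{p.strict_convex} to the degenerate setting where $\gamma$ is only positive semi-definite with kernel $\mathrm{span}(w)$. The key replacements for the assumption $\gamma\in\S^\D_{++}$ a.e.\ are Lemma~\ref{l.vnabla^2Phiv>0} together with the identities~\eqref{e.gamma=sumvv} and~\eqref{e.gamma^2=cgamma}. Following the same template, I will in fact prove the stronger strict inequality
\begin{align*}
    \Phi_{P_1,\Psi^\star,\alpha_\lambda}(s,x_\lambda) < (1-\lambda)\Phi_{P_1,\Psi^\star,\alpha_0}(s,x_0) + \lambda\Phi_{P_1,\Psi^\star,\alpha_1}(s,x_1)
\end{align*}
for every $s\in[0,\tau)$ and every $x_0,x_1\in\R^\D$ (with $x_\lambda=(1-\lambda)x_0+\lambda x_1$), where $\tau=\min\{s\in[0,1]:\alpha_0=\alpha_1\text{ on }[s,1]\}$; the proposition then follows at $s=0$ with $x_0=x_1=x$, and the more general form is needed below. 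Exactly as in Step~1 of the proof of Proposition~\ref{p.strict_convex}, I choose $\tau'\in(s,\tau]$ with $\alpha_0(\tau')>0$ and $\int_{\tau'}^\tau\alpha_0\tr(\gamma)\d r<C^{-1}$ for the constant $C$ of Lemma~\ref{l.Phi_convex}~\eqref{i.d^2Phi<C}, and then handle $s\in[\tau',\tau)$ and $s\in[0,\tau')$ separately.

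For $s\in[\tau',\tau)$, I argue by contradiction. A look at the proof of Lemma~\ref{l.unique_maximizer} shows the bound $\frac{\d^2}{\d\lambda^2}F^{s,t}_\alpha(u_\lambda,x)\leq\bigl(C\int_s^t\alpha\tr(\gamma)\d r-1\bigr)\E\int_s^t\alpha|\sqrt{\gamma}(u_1-u_0)|^2\d r$, so the maximizer of $F^{s,t}_\alpha(\cdot,x)$ is unique only modulo $\ker\sqrt{\gamma}=\ker\gamma$. Thus the assumed equality forces $\gamma u^\star_0=\gamma u^\star_\lambda$ rather than $u^\star_0=u^\star_\lambda$, i.e.\ $\gamma\nabla\Phi^0(r,X_0(r))=\gamma\nabla\Phi^\lambda(r,X_\lambda(r))$ for all $r\in[s,t]$ a.s., writing $\Phi^\lambda=\Phi_{P_1,\Psi^\star,\alpha_\lambda}$. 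Multiplying the first identity of Lemma~\ref{l.deriv_Phi} by $\gamma$, invoking this equality, and applying the It\^o isometry yields $\gamma\nabla^2\Phi^0(r,X_0(r))\sqrt{\gamma}=\gamma\nabla^2\Phi^\lambda(r,X_\lambda(r))\sqrt{\gamma}$ a.s.\ and a.e.\ in $r$; since $\mathrm{Range}(\sqrt{\gamma})=\mathrm{span}\{v_k:1\leq k\leq\D\}$ while $\ker\gamma=\mathrm{span}(w)$ with $w^\intercal v_k=0$, this reduces to $v_k^\intercal\nabla^2\Phi^0 v_l=v_k^\intercal\nabla^2\Phi^\lambda v_l$ for all $k,l$. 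Now I apply the second identity of Lemma~\ref{l.deriv_Phi} with $y=y'=v_k$, take expectations, and subtract the $\Phi^0$ and $\Phi^\lambda$ versions: since $\nabla^2(\Phi^0-\Phi^\lambda)v_k\in\ker\gamma$, the $\gamma$-quadratic forms $(\nabla^2\Phi^0 v_k)^\intercal\gamma(\nabla^2\Phi^0 v_k)$ and $(\nabla^2\Phi^\lambda v_k)^\intercal\gamma(\nabla^2\Phi^\lambda v_k)$ agree pointwise, yielding
\begin{align*}
    \int_a^b(\alpha_0-\alpha_\lambda)\E\bigl[(\nabla^2\Phi^0 v_k)^\intercal\gamma(\nabla^2\Phi^0 v_k)\bigr]\d r=0
\end{align*}
for all $s\leq a\leq b\leq\tau$. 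By Lemma~\ref{l.vnabla^2Phiv>0}, $v_k^\intercal\nabla^2\Phi^0 v_k>0$; combined with $w^\intercal v_k=0$ this forces $\nabla^2\Phi^0 v_k\notin\mathrm{span}(w)=\ker\gamma$, so the integrand is a.s.\ strictly positive. Hence $\alpha_0=\alpha_\lambda$ on $[s,\tau)$, contradicting the definition of $\tau$.

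For $s\in[0,\tau')$ I set $t=\tau'$. The previous step applied at $s=\tau'$ gives the strict inequality $\Phi^\lambda(\tau',y_\lambda)<(1-\lambda)\Phi^0(\tau',y_0)+\lambda\Phi^1(\tau',y_1)$ for every $y_0,y_1\in\R^\D$. The linearity of $L^{s,t}_\alpha$ in $\alpha$ from~\eqref{e.L=}, combined with the variational representation of Proposition~\ref{p.var} applied with the test process $u=u^\star_\lambda$, then propagates this strict inequality down to $s$ exactly as in Step~3 of the proof of Proposition~\ref{p.strict_convex}. The main obstacle is the second step above: since uniqueness of the maximizer holds only modulo $\ker\gamma$, one cannot equate $\nabla^2\Phi^0$ and $\nabla^2\Phi^\lambda$ outright, and the whole argument hinges on the fact that Lemma~\ref{l.vnabla^2Phiv>0} delivers positivity of $v_k^\intercal\nabla^2\Phi v_k$ precisely in the directions $v_k$ orthogonal to $\ker\gamma$, which is exactly what makes $(\nabla^2\Phi^0 v_k)^\intercal\gamma(\nabla^2\Phi^0 v_k)$ strictly positive and thereby drives the contradiction.
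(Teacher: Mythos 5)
Your proof is correct, but it takes a genuinely different route from the paper. You adapt the Auffinger--Chen-style contradiction argument of Proposition~\ref{p.strict_convex}: weaken Lemma~\ref{l.unique_maximizer} to uniqueness modulo $\ker\gamma$, push that through Lemma~\ref{l.deriv_Phi} to get $P(\nabla^2\Phi^0-\nabla^2\Phi^\lambda)P=0$ (where $P$ is the projection onto $w^\perp$), and then obtain a contradiction from the strict positivity supplied by Lemma~\ref{l.vnabla^2Phiv>0}. The key algebraic facts you use — that $\nabla^2(\Phi^0-\Phi^\lambda)v_k\in\ker\gamma$ makes the $\gamma$-quadratic forms agree, and that $v_k^\intercal\nabla^2\Phi^0 v_k>0$ combined with $v_k\perp w$ forces $\nabla^2\Phi^0 v_k\notin\ker\gamma$ — are the right ones, and your linear-algebra reductions (the range of $\sqrt\gamma$ being $\mathrm{span}\{v_k\}$, etc.) are sound since $\gamma$ is a constant multiple of the orthogonal projection onto $w^\perp$.

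The paper instead uses a Jagannath--Tobasco-style argument, announced just before the proposition. It fixes the single control $u=u^\star_{\alpha_\lambda}$ on $[0,1]$, defines $Y_\ell = x + \int_0^1\alpha_\ell\gamma u\,\d r + \int_0^1\sqrt\gamma\,\d B$, observes $Y_\lambda=(1-\lambda)Y_0+\lambda Y_1$, and then gets strictness from Lemma~\ref{l.phi(x_lambda)<} (strict convexity of $\phi$ in directions transversal to $\ker\gamma$) once it is verified that $\P(|\gamma(Y_1-Y_0)|>0)>0$; the latter is proved by computing $\E|\gamma(Y_1-Y_0)|^2$, where the martingale representation of $u$ from Lemma~\ref{l.deriv_Phi}, the identities~\eqref{e.gamma^2=cgamma} and~\eqref{e.gamma=sumvv}, Lemma~\ref{l.vnabla^2Phiv>0}, and the strict positive-definiteness of the Brownian kernel $p(t\wedge s)$ all enter. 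The paper's route is somewhat more economical: it bypasses the two-step $\tau'$ scaffolding, the re-derivation of the unique-maximizer lemma modulo $\ker\gamma$, and the It\^o-isometry comparison of Hessians, at the cost of the positive-definite-kernel computation. Your route is closer to the paper's own proof of Proposition~\ref{p.strict_convex} and, modulo the care you took with the kernel of $\gamma$, would also have worked; but it is not the argument the paper gives.
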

\begin{proof}
We write $\Phi_\alpha = \Phi_{P_1,\Psi^\star,\alpha}$.
Let $u=u^\star_{\alpha_\lambda}$ be the maximizer given in~\eqref{e.def_u} for $s=0$ and $t=1$. For $\ell \in \{0,\lambda,1\}$, we set
\begin{align*}
    Y_\ell & = x+\int_0^1\alpha_\ell \gamma u \d r + \int_0^1 \sqrt{\gamma}\d B.
\end{align*}
Notice $Y_\lambda = (1-\lambda)Y_0 + \lambda Y_1$.
Assuming
\begin{align}\label{e.P()>0}
    \P\Ll(\Ll|\gamma(Y_1-Y_0)\Rr|>0\Rr)>0
\end{align}
and using Lemma~\ref{l.phi(x_lambda)<} and the convexity of $\phi$ given in Lemma~\ref{l.yphiy>0_potts}, we get
\begin{align*}
    \E \phi(Y_\lambda) < (1-\lambda) \E \phi(Y_0) + \lambda \E \phi(Y_1).
\end{align*}
Using this and Proposition~\ref{p.var} for $s=0$ and $t=1$, we have
\begin{align*}
    &\Phi_{\alpha_\lambda}(0,x) = \E \Ll[\phi(Y_\lambda ) - \frac{1}{2}\int_0^1\alpha_\lambda \la \gamma,u u^\intercal\ra_{\S^\D} \d r\Rr]
    \\
    &< (1-\lambda)\E \Ll[\phi(Y_0 ) - \frac{1}{2}\int_0^1\alpha_0 \la \gamma,uu^\intercal\ra_{\S^\D} \d r\Rr]+ \lambda\E \Ll[\phi(Y_1 ) - \frac{1}{2}\int_0^1\alpha_1 \la \gamma,uu^\intercal\ra_{\S^\D} \d r\Rr]
    \\
    &\leq (1-\lambda)\Phi_{\alpha_0}(0,x)+\lambda\Phi_{\alpha_1}(0,x)
\end{align*}
verifying the strict convexity.

It remains to verify~\eqref{e.P()>0}. It suffices to show $\E|\gamma(Y_1-Y_0)|^2>0$. For brevity, we write $\Delta =\alpha_1 -\alpha_0$. We can compute
\begin{align*}
    \E|\gamma(Y_1-Y_0)|^2 = \E \Ll|\int_0^1\Delta\gamma^2 u\d r\Rr|^2 =\la \gamma^4,\iint_{[0,1]^2}\Delta(t)\Delta(s)\E \Ll[u(t)u(s)^\intercal \Rr]\d t\d s\ra_{\S^\D}
\end{align*}
Let $X=X_{\alpha_\lambda}$ be given in~\eqref{e.SDE} for $s=0$ and $t=1$. By \eqref{e.def_u} and Lemma~\ref{l.deriv_Phi}, we have $u(s) = \nabla\Phi_{\alpha_\lambda}(0,x)+ \int_0^s A\sqrt{\gamma}\d B$ where $A(r) = \nabla^2\Phi_{\alpha_\lambda}(r,X(r))$. 
Then, we have
\begin{align*}
    \E \Ll[u(t)u(s)^\intercal \Rr] = \int_0^{t\wedge s}\E \Ll[ A \gamma A \Rr]\d r
\end{align*}
By~\eqref{e.gamma^2=cgamma} and~\eqref{e.gamma=sumvv}, there are constants $c,c'>0$ depending only on $\beta_2$ and $\D$ such that 
\begin{align*}
    \la \gamma^4, A\gamma A \ra_{\S^\D}  =c \la \gamma, A\gamma A\ra_{\S^\D}  = c'\sum_{k,l=1}^\D \la v_kv_k^\intercal, Av_lv_l^\intercal A \ra_{\S^\D} =  c'\sum_{k,l=1}^\D \tr\Ll(v_kv_k^\intercal Av_lv_l^\intercal A\Rr) 
    \\
    = c' \sum_{k,l=1}^\D \tr\Ll(v_k^\intercal Av_lv_l^\intercal A v_k\Rr)  = c' \sum_{k,l=1}^\D \Ll(v^\intercal_k A v_l\Rr)^2\geq c' \sum_{k=1}^\D \Ll(v^\intercal_k A v_k\Rr)^2 >0
\end{align*}
where the last inequality follows from Lemma~\ref{l.vnabla^2Phiv>0}. Therefore, we have
\begin{align*}
    \la \gamma^4, \E \Ll[u(t)u(s)^\intercal \Rr]\ra_{\S^\D}=\int_0^{t\wedge s} \E \la \gamma^4,A\gamma A\ra_{\S^\D} \d r = p(t\wedge s) = p(t)\wedge p(s)
\end{align*}
for a strictly increasing function $p:[0,\infty)\to[0,\infty)$.
Hence,
\begin{align*}
    \E|\gamma(Y_1-Y_0)|^2 = \iint_{[0,1]^2} \Delta(t)\Delta(s) p(t\wedge s) \d t\d s.
\end{align*}
We can view $(s,t)\mapsto p(t\wedge s)$ as the kernel of a Brownian motion with time change. Since $p$ is strictly increasing, this kernel is strictly positive definite. Due to $\Delta \neq 0$, the right-hand side in the above is strictly positive, which implies~\eqref{e.P()>0} and completes the proof.
\end{proof}

\begin{proof}[Proof of Theorem~\ref{t.convex_Potts} in case~\eqref{i.potts_case_2}]
Recall $z$ in the statement and $\tilde P^z_1$ in \eqref{e.tildeP_1}.
By~\eqref{e.nabla_xi=}, 
\begin{align*}
    \frac{1}{2}\nabla\xi(z)\cdot e_ke_k^\intercal = \beta_2^2z \cdot e_ke_k^\intercal = \frac{\beta^2_2}{\D}
\end{align*}
in this case.
Since $\sigma$ takes value in $\{e_k\}_{k=1}^\D$ under $P_1$, we have $\tilde P_1^z = e^{-\frac{\beta^2_2}{\D}} P_1$. Substituting $\tilde P_1^z$ for $P_1$ in the initial condition $\phi$ in~\eqref{e.phi=initial}, we get $\Phi_{\tilde P_1^z,\Psi^\star,\alpha} = \Phi_{P_1,\Psi^\star,\alpha}- \frac{\beta^2_2}{\D}$. Therefore, Proposition~\ref{p.Phi_convex_potts} implies that $\alpha\mapsto\Phi_{\tilde P_1^z,\Psi^\star,\alpha}(0,x)$ is strictly convex for every $x\in\R^\D$. This yields part~\eqref{i.Phi_convex}. By the same argument in the proof of Theorem~\ref{t.convex}~\eqref{i.uniq_minimizer} in Section~\ref{s.pf_main}, part~\eqref{i.uniq_minimizer} follows.
\end{proof}

\small
\bibliographystyle{abbrv}
\newcommand{\noop}[1]{} \def\cprime{$'$}

\end{document}